\documentclass{amsart}

\usepackage[T1]{fontenc}
\usepackage{amsmath,amssymb,amsthm,mathtools}
\usepackage[margin=1in]{geometry}
\usepackage[colorlinks=true,linkcolor=blue,citecolor=blue,urlcolor=blue]{hyperref}

\usepackage[
    style=numeric,
    sorting=nyt,
    maxalphanames=5,
    maxnames = 5,
    url=false,
    doi=false,
    eprint=false
]{biblatex}
\title{Almost-linear Zarankiewicz bounds in 1-semi-equational theories}

\author{Hongyi Gou}
\address{Department of Mathematics, National University of Singapore, Singapore}
\email{e0708226@u.nus.edu}

\author{Mostafa Mirabi}
\address{The Taft School, Watertown, CT 06795, USA, and Wesleyan
University, Middletown, CT 06459, USA}
\email{mmirabi@wesleyan.edu}
\urladdr{https://sites.google.com/site/mostafamirabi}

\author{Mihir Mittal}
\address{Department of Mathematics, National University of Singapore, Singapore}
\email{mihirmittal24@u.nus.edu}

\author{Chieu-Minh Tran}
\address{Department of Mathematics, National University of Singapore, Singapore}
\email{trancm@nus.edu.sg}

\author{Zhenyu Yang}
\address{School of Statistics and Data Science, Nankai University, Tianjin, China}
\email{2110314@mail.nankai.edu.cn}

\keywords{semi-equational theories, $(k,1)$-semi-equations,
Zarankiewicz problem, laminar set systems, extremal hypergraphs,
almost-linear bounds}

\subjclass[2020]{Primary 03C45; Secondary 05C35, 05C65, 03C98}

\theoremstyle{plain}
\newtheorem{thm}{Theorem}[section]
\newtheorem{lem}[thm]{Lemma}
\newtheorem{prop}[thm]{Proposition}
\newtheorem{cor}[thm]{Corollary}
\newtheorem{mainthm}{Theorem}

\theoremstyle{definition}
\newtheorem{defn}[thm]{Definition}

\theoremstyle{remark}
\newtheorem{rem}[thm]{Remark}

\numberwithin{equation}{section}

\newcommand{\ind}{\mathbf 1}

\begin{document}

\begin{abstract}
We study multipartite hypergraphs definable in \(1\)-semi-equational
theories and prove almost-linear Zarankiewicz bounds in every fixed
arity \(r\geq2\). If \(T\) is a \(1\)-semi-equational theory, then,
for every formula \(\varphi\) and fixed \(t,r\geq2\), there is a
constant \(c\) such that
each \(K_{t,\ldots,t}\)-free \(r\)-partite hypergraph defined by
\(\varphi\) on \(n\) vertices has
\(O_{T,\varphi,t,r}(n^{r-1}(1+\log(1+n))^c)\) edges. Put
\(\alpha_k=\min\{k-1,2\}\). In the bipartite case, a Boolean
combination of \(m\)
\((k,1)\)-semi-equations has
\(O_{k,t,m}(n(1+\log(1+n))^{(m-1)\alpha_k})\) edges whenever it is
\(K_{t,t}\)-free. In particular, a relation defined by one
\((k,1)\)-semi-equation or its negation has a linear bound. The proofs
combine incidence estimates for indexed set systems with low-crossing
orderings of finite \(k\)-wise laminar families. Consequently, no $1$-semi-equational theory locally trace-defines an infinite domain. 
\end{abstract}

\maketitle

\section{Introduction}

For integers \(n_1,n_2\geq 1\) and \(t\geq 2\), the Zarankiewicz problem
asks for the maximum number of edges in a \(K_{t,t}\)-free bipartite graph
\(G=(V_1,V_2;E)\) with \(|V_1|=n_1\) and \(|V_2|=n_2\). Writing
\(n=n_1+n_2\), the theorem of K\H{o}v\'ari, S\'os, and
Tur\'an~\cite{kovari1954problem} gives
\(|E|=O_t(n^{2-1/t})\); when \(t=2\), this exponent is
sharp~\cite{reiman1958problem}. Additional structure on the edge relation
can yield stronger estimates. For example, the point-line incidence
relation in \(\mathbb R^2\) is \(K_{2,2}\)-free, and the
Szemer\'edi--Trotter theorem~\cite{szemeredi1983extremal} gives the sharp
estimate \(O(n^{4/3})\), improving the general \(O(n^{3/2})\) bound.

Model theory and incidence geometry provide several frameworks for
obtaining improved Zarankiewicz bounds. Fox, Pach, Sheffer, Suk, and
Zahl~\cite{fox2017semi} established power-saving bounds for
semialgebraic graphs, and Do~\cite{Do2018} obtained corresponding
higher-arity estimates. Chernikov, Galvin, and Starchenko
\cite{chernikov2020cutting} extended the binary estimates to relations
definable in distal structures, while Tong~\cite{Tong2026} obtained
higher-arity bounds from distal regularity.

Basit, Chernikov, Starchenko, Tao, and Tran \cite{basit2021zarankiewicz} proved almost-linear bounds for semilinear hypergraphs. 
Several subsequent approaches have obtained incidence bounds in geometric settings under the \(K_{t,t}\)-free assumption; the ones most closely related to the present work include the geometric incidence estimates of Chan and Har-Peled~\cite{ChanHarPeled2023}, who gave an \(O(n(\log n/\log\log n)^{d-1})\) bound for axis-parallel boxes in \(\mathbb{R}^d\); the \(\varepsilon\)-\(t\)-net method of Keller and Smorodinsky~\cite{KellerSmorodinsky2024}, yielding a simple proof of the Zarankiewicz bound for semialgebraic graphs; the factorization-norm method of Tomon~\cite{TomonFactorization}, which shows that bounded \(\gamma_2\)-norm implies bounded degree in \(K_{t,t}\)-free graphs; and the structural method with geometric applications developed by Hunter, Milojevi\'c, Sudakov, and Tomon~\cite{HunterMilojevicSudakovTomon2025}, obtaining \(C_4\)-free subgraph bounds with applications to incidence problems. Higher-arity geometric intersection problems were studied by Chan, Keller, and Smorodinsky~\cite{ChanKellerSmorodinsky2025} and, for axis-parallel boxes, by Chao, Dong, Liu, Shu, and Wang~\cite{ChaoDongLiuShuWang}, who established a dichotomy for hypergraph Zarankiewicz problems.

In the model-theoretic direction, Eleftheriou and
Papadopoulos~\cite{EleftheriouPapadopoulos2025} obtained linear
Zarankiewicz bounds in several global settings. Their hypotheses concern
a fixed globally \(K_{t,\ldots,t}\)-free definable relation, whereas the
results of the present paper require only that the finite induced
hypergraph under consideration be \(K_{t,\ldots,t}\)-free.

We study the semi-equational framework introduced by Chernikov and Mennen~\cite{ChernikovMennen}. Let \(\mathcal F\) be an indexed family of subsets of a set \(X\). The family is \emph{laminar} if any two of its members are disjoint or comparable under inclusion. For \(k\geq2\), it is \emph{\(k\)-wise laminar} if, whenever \(k\) distinct indices determine sets with nonempty common intersection, two of those sets are comparable. Thus \(2\)-wise laminarity is ordinary laminarity. Equal members are comparable.

Let \(M\) be a structure. Following~\cite{ChernikovMennen}, a
partitioned formula \(\phi(x;y)\) is a
\emph{\((k,1)\)-semi-equation in \(M\)} if, whenever
\(b_1,\ldots,b_k\in M^{|y|}\) and
\(M\models\exists x\bigwedge_{j=1}^k\phi(x;b_j)\), there are distinct
\(i,j\in\{1,\ldots,k\}\) such that
\(M\models\forall x(\phi(x;b_i)\rightarrow\phi(x;b_j))\).
Equivalently, the indexed family
\((\phi(M^{|x|};b))_{b\in M^{|y|}}\) is \(k\)-wise laminar.
A theory is \((k,1)\)-semi-equational if every partitioned formula is
equivalent, modulo the theory, to a finite Boolean combination of
\((k,1)\)-semi-equations. It is \emph{\(1\)-semi-equational} if every
partitioned formula is equivalent to a finite Boolean combination of
\((k,1)\)-semi-equations, where \(k\) may depend on the formula. This is
the terminology of Definition~2.13 in~\cite{ChernikovMennen}. This notion extends a familiar form of model-theoretic linearity.
By~\cite[Proposition~2.19]{ChernikovMennen}, a stable theory is
\(1\)-semi\-equational if and only if it is \(1\)-based. Thus
\(1\)-semi\-equationality may be viewed as a form of \(1\)-basedness
beyond stability. Unstable examples include arbitrary unary expansions
of linear orders and several classes of ordered abelian groups; see
\cite[Sections~3.2 and~3.4]{ChernikovMennen}. They proved
that every \((2,1)\)-semi-equation is the
intersection of two, not necessarily definable, basic relations, where a
basic relation has the form \(f(a)<g(b)\) for maps \(f,g\) into a linear
order; see Proposition~2.23 in~\cite{ChernikovMennen}. They asked whether every
\((k,1)\)-semi-equation with \(k\geq3\) is a Boolean combination of basic
relations; this is Problem~2.25 in~\cite{ChernikovMennen}, and it remains open.
We instead prove the almost-linear Zarankiewicz bounds directly from
\(k\)-wise laminarity, without using a decomposition into basic
relations. The bipartite case answers the Zarankiewicz question in
Problem~6.2 of~\cite{ChernikovStarchenkoOneBased}, while the higher-arity
theorem addresses the Zarankiewicz aspect of Problem~6.1 in the same
paper. It does not address the
distal-cell-decomposition question in Problem~6.2 or the basic-relation
decomposition problem above. On the other hand,
natural valuative comparisons need not be \((k,1)\)-semi-equations for
any finite \(k\). Laminar families nevertheless arise after restriction
to constant-valuation components and coordinate projection. See
Example~3.1 and Proposition~3.3 of
\cite{gou2026zarankiewiczsboundsvaluedvector}.
This suggests that a more general semi-equational framework may have to
incorporate such local or projection-laminar structure.

The first version of this paper proved the signed extension estimate by
a direct recursion from \(k\)-wise to \((k+1)\)-wise laminar families;
this argument is retained in Sections~\ref{sect:positive-induction}
and~\ref{sect:negative-induction}. The linear-size input used in the
sharpening below follows from Knill's theorem on families of locally
bounded width~\cite{KnillLocallyBoundedWidth} and is also independently proved directly
for \(k\)-wise laminar families by Tomon in
\cite[Theorem~1.2]{TomonCrossFree}. After the first version appeared,
Tomon brought this bound to our attention and separately suggested
combining its dual consequence with the low-crossing path theorem in the
formulation of Bonnet, Duron, Sylvester, and
Zamaraev~\cite{BonnetDuronSylvesterZamaraev}, which is based on Welzl's
method~\cite{Welzl1988}, and a dyadic interval decomposition. This gives
the sharper exponent below and is proved in
Section~\ref{sect:sharpening}.

We first state the absolute higher-arity result, followed by the explicit graph bounds. Throughout the paper, \(O_\eta(\cdot)\) denotes a quantity bounded in absolute value by a constant depending only on the parameters \(\eta\). In the sections below, constants denoted by \(C_\eta\) have the same dependence and may vary between occurrences.

For finite sets \(V_1,\ldots,V_r\), a relation
\(E\subseteq V_1\times\cdots\times V_r\) is
\emph{\(K_{t,\ldots,t}\)-free} if there are no subsets
\(A_j\subseteq V_j\), all of cardinality \(t\), such that
\(A_1\times\cdots\times A_r\subseteq E\). For \(r=2\), we write
\(K_{t,t}\)-free.

The tuples \(x_1,\ldots,x_r\) in Theorem~\ref{thm:absolute-main}
correspond to the \(r\) vertex classes. The tuple \(z\) is an auxiliary
parameter and does not determine an additional part of the hypergraph.
In the bipartite statements, fixed auxiliary parameters are suppressed
and the two vertex classes are denoted by \(x\) and \(y\). In the proof
of Theorem~\ref{thm:absolute-main}, semi-equationality is applied to the
partition \((x_1,\ldots,x_{r-1});(x_r,z)\).

\begin{mainthm}\label{thm:absolute-main}
Let \(T\) be a \(1\)-semi-equational theory and let \(M\models T\).
Fix \(t,r\geq2\) and a formula \(\varphi(x_1,\ldots,x_r;z)\). There is
\(c\geq0\), depending only on
\(T,\varphi,t,r\), such that the following holds uniformly for all
\(d\in M^{|z|}\). If
\(V_j\subseteq M^{|x_j|}\) are finite, \(n=\sum_{j=1}^r|V_j|\), and
\[
 E=\left\{(a_1,\ldots,a_r)\in
 V_1\times\cdots\times V_r:
 M\models\varphi(a_1,\ldots,a_r;d)\right\}
\]
is \(K_{t,\ldots,t}\)-free, then
\[
 |E|=O_{T,\varphi,t,r}\!\left(
 n^{r-1}\bigl(1+\log(1+n)\bigr)^c\right).
\]
\end{mainthm}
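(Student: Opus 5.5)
We reduce Theorem~\ref{thm:absolute-main} to a bipartite estimate and then induct on the arity \(r\). For the partition \((x_1,\ldots,x_{r-1});(x_r,z)\), \(1\)-semi-equationality gives \(k\) and \((k,1)\)-semi-equations \(\psi_1,\ldots,\psi_m\) such that \(\varphi\) is equivalent to a Boolean combination of them. Both \(k\) and \(m\) depend only on \(T,\varphi\). The bipartite core we aim to prove is the following estimate. If \(X,Y\) are finite, \(|X|+|Y|=N\), and \(E\subseteq X\times Y\) is a Boolean combination of \(m\) relations each of which is \(k\)-wise laminar in the \(Y\)-indexed direction, and \(E\) is \(K_{s,u}\)-free (with \(s\) on the \(X\) side), then
\[
|E|=O_{k,s,u,m}\bigl(N(1+\log(1+N))^{c_0}\bigr),
\]
with \(c_0=c_0(k,m)\). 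To prove it, write \(E\) as a disjoint union of at most \(2^m\) atoms \(\bigcap_i \psi_i^{\pm}\). Each atom is \(K_{s,u}\)-free, so it suffices to treat intersections of \(m\) laminar-type relations and their complements. We do this by induction on \(m\). Order the sets of one \(k\)-wise laminar family using a low-crossing path from the linear-size bound of Knill/Tomon together with Welzl's method, as in \cite{BonnetDuronSylvesterZamaraev}. Each point then meets the family in \(O_k(\log N)\) intervals of the order. A dyadic interval decomposition splits the incidences into \(O(\log N)\) levels of complete-bipartite-like pieces, and each piece carries the remaining \(m-1\) relations. Complete bipartite pieces are controlled by the \(K_{s,u}\)-freeness, and the base case of a single semi-equation or its negation should be linear. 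Summing over the levels gives one extra \(\log\) factor per relation. The same scheme with \(X\) a set of \((r-1)\)-tuples produces the needed asymmetric version.

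For the induction on \(r\), put \(X=V_1\times\cdots\times V_{r-1}\) (points \(a'=(a_1,\ldots,a_{r-1})\)) and \(Y=V_r\), and fix \(d\). Since \(z\) lives on the parameter side, semi-equationality applies uniformly in \(d\). A direct bipartite application fails because \(X\) need not be \(K_{s,t}\)-free with \(s\) bounded: the hypergraph \(E\) is only \(K_{t,\ldots,t}\)-free. We therefore use a standard dichotomy. Call \(a'\) \emph{heavy} if it lies in some \(t\)-set \(B\subseteq Y\) that is complete to a large set of \((r-1)\)-tuples. For each \(t\)-subset \(B\subseteq V_r\), the common link \(L_B=\{a'\in X: a'\times B\subseteq E\}\) is an \((r-1)\)-partite hypergraph. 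It is defined by the formula \(\bigwedge_{b\in B}\varphi(x_1,\ldots,x_{r-1},b;d)\), with the elements of \(B\) as parameters, and it is \(K_{t,\ldots,t}\)-free. By the induction hypothesis applied to this formula, \(|L_B|=O(n^{r-2}\log^{c'}n)\). This case uses the uniformity in the parameter and the conjunction of \(t\) instances, which is why \(c\) depends on \(t\) and \(r\). The bound on \(|L_B|\) is then fed into the bipartite argument in place of the \(K_{s,u}\)-freeness. In the dyadic decomposition, a complete piece \(P\times Q\) with \(|Q|\ge t\) must satisfy \(P\subseteq L_B\) for every \(t\)-subset \(B\subseteq Q\), so \(|P|\le O(n^{r-2}\log^{c'}n)\). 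Pieces with \(|Q|<t\) contribute at most \(t|P|\). Summing over pieces, with the number of pieces per level controlled by the low-crossing structure on the \(Y\)-side (size \(\le n\)), gives \(|E|=O(n^{r-1}\log^{c}n)\) with \(c=c'+c_0\).

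The main obstacle is this bookkeeping in the last step. The low-crossing and dyadic decomposition must be arranged so that the total contribution of the complete pieces is bounded by (number of parameter-side blocks) \(\times\) (max link size), and not by \(|X|\) times something. This requires the decomposition to have total \(Y\)-side complexity \(O(n\log^{O(1)}n)\) and to cover each point in \(X\) by only polylogarithmically many pieces. I would first try to make the partition purely on the \(Y\)-side: order \(V_r\) by the low-crossing path so that each \(a'\) has its neighborhood as \(O(\log n)\) intervals, then split into dyadic blocks of \(V_r\). Each block \(Q\) is either fully in the neighborhood of \(a'\) (a complete piece) or is a boundary block, and there are \(O(\log n)\) boundary blocks per \(a'\), handled recursively on the smaller block. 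The recursion depth is \(O(\log n)\), each level costs \(O(n^{r-1}\log^{c'}n)\), and the Boolean combination over \(m\) relations costs a further power of \(\log\).
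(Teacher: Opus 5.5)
Your proposal is correct and follows the paper's proof. You induct on \(r\) and bound every \(t\)-fold fiber intersection \(H_{b_1}\cap\cdots\cap H_{b_t}\) uniformly by applying the induction hypothesis to the conjunction formula \(\bigwedge_{j=1}^t\varphi(x_1,\ldots,x_{r-1},u_j;z)\). You then feed this bound \(B\) into an asymmetric bipartite estimate of the form \(C(|X|+(B+1)|V_r|)(1+\log(1+n))^{c_0}\). That estimate is proved by splitting the Boolean combination into atoms, ordering \(V_r\) by a low-crossing path (Knill/Tomon plus Bonnet--Duron--Sylvester--Zamaraev), decomposing into dyadic blocks, and inducting on the number of literals, with \(|P||Q|\leq(t-1)|P|+B|Q|\) at the bottom. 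Some of your bookkeeping is off, though none of it affects the existence of \(c\): each relation costs \(\log^2\) on the \(|X|\)-side, not one \(\log\); the relevant quantity is \(\sum_u|Q_u|\), not the number of parameter-side blocks; and the ``heavy'' dichotomy is never used.
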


Put \(\alpha_k=\min\{k-1,2\}\). The constant-base conclusion of
Theorem~\ref{thm:boolean-combinations} gives the following bipartite
estimate.

\begin{cor}\label{cor:bipartite-main}
Let \(M\) be a structure, let \(k,t\geq2\), and let
\(\phi_1(x;y),\ldots,\phi_m(x;y)\) be \((k,1)\)-semi-equations in
\(M\), where \(m\geq1\). Let
\(\Psi\colon\{0,1\}^m\to\{0,1\}\). For finite sets
\(V\subseteq M^{|x|}\) and \(W\subseteq M^{|y|}\), put
\(n=|V|+|W|\) and define
\[
 E=\left\{(a,b)\in V\times W:
 \Psi\bigl(\ind_{M\models\phi_1(a;b)},\ldots,
 \ind_{M\models\phi_m(a;b)}\bigr)=1\right\}.
\]
If \(E\) is \(K_{t,t}\)-free, then
\begin{equation}\label{eq:bipartite-boolean}
 |E|=O_{k,t,m}\!\left(
 n(1+\log(1+n))^{(m-1)\alpha_k}\right).
\end{equation}
In particular, if \(m=1\), then \(|E|=O_{k,t}(n)\).
\end{cor}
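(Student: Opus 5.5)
This corollary is meant to come from the constant-base case of Theorem~\ref{thm:boolean-combinations}, which has not been stated yet. I therefore plan a direct argument that isolates what that theorem must supply.

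\textbf{Reduction.} Write \(\Psi\) in disjunctive normal form as a disjoint union, over \(\sigma\in\{0,1\}^m\) with \(\Psi(\sigma)=1\), of the cells
\[
E_\sigma=\Bigl\{(a,b)\in V\times W:\ \ind_{M\models\phi_i(a;b)}=\sigma_i\ \text{for all } i\Bigr\}.
\]
Each \(E_\sigma\subseteq E\) is again \(K_{t,t}\)-free, and there are at most \(2^m\) cells. It therefore suffices to bound each signed conjunction \(\bigwedge_i\phi_i^{\sigma_i}\). The sets \(S_i(b)=\phi_i(V;b)\) form \(m\) indexed families on \(V\), each \(k\)-wise laminar. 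An edge \((a,b)\) of \(E_\sigma\) means that \(a\) lies in \(S_i(b)\) for \(i\in P\) and outside \(S_i(b)\) for \(i\in N\), where \(P\sqcup N=[m]\).

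\textbf{Base case \(m=1\).} For a positive sign, \(E\) is the incidence relation of a \(k\)-wise laminar family \(\{S(b)\}_{b\in W}\) on \(V\). We may assume \(t\le|S(b)|\) for the relevant \(b\), since sets with fewer than \(t\) points contribute at most \(tn\) edges in total.

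By \(K_{t,t}\)-freeness, any \(t\) points lie together in fewer than \(t\) of the sets. This bounds multiplicities, so the family has \(O_{k,t}(n)\) distinct members.

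I would then use the Knill/Tomon linear-size bound for \(k\)-wise laminar families (\cite[Theorem~1.2]{TomonCrossFree}) together with a charging argument. Charge each incidence \((a,b)\) to a minimal set above \(a\) in the containment poset. The \(k\)-wise laminarity bounds the width of chains through \(a\) locally. The \(K_{t,t}\)-freeness bounds how many nested sets share \(t\) points, so long chains are impossible. These two facts together should give \(O_{k,t}(n)\).

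For a negative sign I would apply the same argument to the complements. This uses the dual \(k\)-wise laminar structure, namely the low-crossing ordering, or the negative-sign induction of Section~\ref{sect:negative-induction}.

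\textbf{Induction on \(m\).} Fix one family, say \(\phi_m\). Using Welzl's low-crossing path theorem (in the form of \cite{BonnetDuronSylvesterZamaraev}), applied via the dual linear-size bound, order \(V\) so that each set \(S_m(b)\), or its complement, is a union of \(O_{k}(1)\) intervals when \(k=2\), and of \(O(\log n)\)-crossing intervals in general.

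Next, take a dyadic decomposition of the ordering into \(O(\log n)\) levels. Each interval of a given \(S_m(b)\) splits into \(O(\log n)\) canonical dyadic blocks. This writes \(E_\sigma\) as a union of bipartite graphs of the form (block \(I\)) \(\times\) \(\{b\}\), restricted to the relation given by the remaining \(m-1\) signed semi-equations. Each such piece is \(K_{t,t}\)-free, and within a fixed level the blocks are disjoint.

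Applying the induction hypothesis to each block and summing over levels, the vertex counts \(\sum_I(|I|+|W_I|)\) stay \(O(n\log n)\) or \(O(n)\) per level. Each added family therefore costs a factor \((\log n)^{\alpha_k}\): one logarithm from the dyadic levels and, for \(k\ge3\), one from the crossing number. The cap at \(2\) in \(\alpha_k\) reflects the fact that the crossing overhead is at most logarithmic.

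\textbf{Main obstacle.} The hard step is the low-crossing ordering with the right crossing bound for \(k\)-wise laminar families. It requires bounding the dual shatter or VC-type quantity linearly using Tomon's theorem, and then verifying that summing \(|W_I|\) over blocks does not inflate the total beyond \(n(\log n)^{\alpha_k}\) per added family.
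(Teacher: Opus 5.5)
Your outline is the paper's: split \(E\) into at most \(2^m\) sign cells, prove a linear bound for one literal, and pay \(\ell_B^{\alpha_k}\) for each further literal through an ordering plus a dyadic tree. Your positive \(m=1\) argument is essentially the paper's Dilworth argument. There are, however, two genuine gaps.

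\textbf{The ordering for \(k\geq3\) is on the wrong side.} You ask for an order of \(V\) in which every \(S_m(b)\) is a union of \(O(\log n)\) intervals. No such order exists in general. In an infinite set, \(\phi((x_1,x_2);y):=(x_1=y\vee x_2=y)\) is a \((3,1)\)-semi-equation, since a pair has only two coordinates and so three fibers with distinct parameters never meet. Take \(W=\{1,\ldots,s\}\) and \(V=\{(i,j):i<j\leq s\}\); this instance is \(K_{2,2}\)-free. In any order of \(V\), consecutive elements \(e\neq e'\) change membership for each \(b\in e\triangle e'\), hence for at least two \(b\). So the \(s\) sets \(S(b)\) undergo at least \(2(|V|-1)\) changes in total, and some \(S(b)\) is a union of roughly \(s/2\approx\sqrt{|V|/2}\) intervals. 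Correspondingly, \((V,\{S(b)\})\) has quadratic dual shatter function (\(s\) stars cut out \(\binom s2\) cells), and Theorem~\ref{thm:linear-size} says nothing on that side. What it linearizes is the trace count \(|\{A\cap Q:A\in\mathcal A\}|\), i.e.\ the dual shatter function of the \emph{dual} system, whose ground set is the family of distinct fibers and whose ranges are \(R_a=\{b:a\in S_m(b)\}\). The low-crossing path must therefore order \(W\), with repeated fibers as consecutive blocks, so that each \(R_a\) and each \(W\setminus R_a\) is a union of \(O_k(\log n)\) intervals (Proposition~\ref{prop:low-contiguity}). The dyadic tree is then built on \(W\), with blocks \(Y_u\times J_u\) satisfying \(\sum_u|Y_u|=O_k(\ell_B^2)|V|\) and \(\sum_u|J_u|=O(\ell_B)|W|\) (Theorem~\ref{thm:direct-signed-extension}). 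Your bookkeeping survives once the sides are swapped. Your point-side order is valid only for \(k=2\), where it is the laminar order of Proposition~\ref{prop:base-extension}.

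\textbf{The negative base case is missing.} Applying ``the same argument to the complements'' fails. The width bound used that all sets containing \(a\) share the point \(a\). For \(\neg\phi\) the relevant indices are those with \(a\notin S(b)\), and \(k\)-wise laminarity does not bound their width: pairwise disjoint \(S(b)\) form an arbitrarily large antichain all missing \(a\). Your fallbacks, the low-crossing ordering or Section~\ref{sect:negative-induction}, lose at least one factor of \(\ell_B\) even against a constant base. So they give neither \(|E|=O_{k,t}(n)\) for \(\Psi=\neg\) nor the exponent \((m-1)\alpha_k\), because a cell such as \(\sigma=(0,\ldots,0)\) has to bottom out at a negative literal.

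The missing idea is that of Proposition~\ref{prop:signed-base}. Among indices \(b\) with \(|V\setminus S(b)|>B\), order by inclusion of \(S(b)\). A chain of length \(t\) would give \(t\) complements whose intersection is the smallest of them, so by Mirsky's theorem these indices split into at most \(t-1\) antichains. Take an antichain \(A\) with \(|A|=q\). Each point lies in at most \(k-1\) of the pairwise incomparable sets \(S(b)\), \(b\in A\), hence in at least \(q-k+1\) complements. Double counting pairs (point, \(t\)-subset of \(A\)) gives \(|V|\binom{q-k+1}{t}\leq B\binom qt\). Hence \(|V|\leq\binom{k+t-1}{t}B\) once \(q\geq k+t-1\), and the weight of \(A\) is at most \(\binom{k+t-1}{t}Bq\), which is charged to \(|W|\). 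Smaller antichains contribute at most \((k+t-2)|V|\).
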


Informally, a structure locally trace-defines another structure if every finite trace of a relation definable in the latter can be realized as the trace of a definable relation in the former.

\begin{cor}\label{cor:trace-domains}
Let \(T\) be a \(1\)-semi-equational theory. Then \(T\) has
almost-linear Zarankiewicz bounds for its definable bipartite graphs and
does not locally trace-define an infinite domain.
\end{cor}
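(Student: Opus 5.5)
The plan is to derive both parts of Corollary~\ref{cor:trace-domains} from Theorem~\ref{thm:absolute-main} with \(r=2\), together with the known fact that an infinite field, and more generally an infinite domain, is locally trace-defined only by theories that fail almost-linear Zarankiewicz bounds.

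\textbf{Almost-linear bounds.} Let \(\varphi(x;y;z)\) define a bipartite graph with a parameter \(d\). Theorem~\ref{thm:absolute-main} with \(r=2\) gives a constant \(c=c(T,\varphi,t)\) such that every finite \(K_{t,t}\)-free induced subgraph on \(n\) vertices has \(O(n(1+\log(1+n))^{c})\) edges, uniformly in \(d\). Since \(\log^{c}n=O_\varepsilon(n^{\varepsilon})\) for every \(\varepsilon>0\), this is exactly the almost-linear bound \(O_{\varepsilon}(n^{1+\varepsilon})\) required in the definition used for trace definability. If the intended definition instead uses polylogarithmic bounds, it holds verbatim.

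\textbf{Infinite domains.} Suppose \(T\) locally trace-defines an infinite domain \(R\). For example, in a domain the point-line incidence relation \(I\subseteq R^2\times R^2\), given by \((p_1,p_2)\mathrel{I}(a,b)\iff p_2=ap_1+b\), is \(K_{2,2}\)-free. Indeed, two distinct lines meet in at most one point, since \((a-a')p_1=b'-b\) has at most one solution in a domain when \(a\neq a'\).

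The remaining task is to find, for infinitely many \(n\), finite point sets \(P\) and line sets \(L\) of size \(n\) with \(\gg n^{1+\delta}\) incidences for a fixed \(\delta>0\).
\begin{itemize}
\item In characteristic \(0\), the prime subring contains \(\mathbb Z\). Take the Erd\H{o}s grid \(P=[N]\times[2N^2]\) and the lines \(y=ax+b\) with \(a\in[N]\), \(b\in[N^2]\). This gives \(n\asymp N^3\) and about \(N^4\asymp n^{4/3}\) incidences.
\item In characteristic \(p\), I would first use that an infinite domain of characteristic \(p\) contains either an infinite field or a transcendental element \(\tau\), so that \(\mathbb F_p[\tau]\subseteq R\). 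In the polynomial-ring case, use the analogous grid of polynomials of bounded degree. If \(R\) contains arbitrarily large finite fields \(\mathbb F_q\), take all points and lines over \(\mathbb F_q\), which gives \(n\asymp q^2\) with \(q^3=n^{3/2}\) incidences.
\end{itemize}

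Now apply local trace definability. The finite trace \(I\restriction(P\times L)\) is realized as the trace of a fixed \(T\)-formula \(\psi\) on finite subsets of \(M\) with a parameter. The main point to check is that the same \(\psi\) works for all finite traces of \(I\): local trace definability is defined for each fixed formula of \(R\), here the formula defining \(I\). The realized relation is then a \(K_{2,2}\)-free induced bipartite graph defined by \(\psi\), and by the first part it has \(O(n\log^{c}n)\) edges. This contradicts the \(n^{1+\delta}\) incidences for large \(n\).

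I expect the main obstacle to be the characteristic-\(p\) case analysis, that is, guaranteeing a superlinear \(K_{2,2}\)-free configuration in every infinite domain. An alternative that avoids it is to use \(x\cdot y=z\) or the relation \(x_1+x_2=x_3\) with a \(K_{2,2}\)-free incidence built from \(\{(a,b,c):ab=c\}\). If the paper's definition of trace definability already cites a lemma that infinite domains violate almost-linear bounds, I would simply invoke it.
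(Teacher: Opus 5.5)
Your proposal is correct. Its first half matches the paper's proof. The paper applies Corollary~\ref{cor:bipartite-main} after taking \(k=\max_q k_q\), which is exactly what the case \(r=2\) of Theorem~\ref{thm:absolute-main} does, uniformly in the parameter. It then observes that \(n(1+\log(1+n))^c=O_{\varepsilon,c}(n^{1+\varepsilon})\).

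The second half takes a different route. The paper proves nothing about domains itself: it cites Walsberg \cite[Lemma~3.4 and Proposition~3.8]{WalsbergTraceII} for two facts. First, the \(O(n^{1+\varepsilon})\) property is preserved under local trace definability. Second, it fails in every infinite domain. You prove both facts directly.

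\begin{itemize}
\item Your transfer step is precisely the content of the preservation lemma. It uses a fixed formula \(\psi\) and an injection \(\tau\colon A\to M^m\), so the induced \(\psi\)-graph on the images of points and lines is isomorphic to the incidence graph. Any varying parameter is absorbed by the uniformity in \(d\).
\item Your point--line constructions supply the failure with an explicit power gap, \(n^{4/3}\) or \(n^{3/2}\) against \(n\log^c n\). The constructions are the Erd\H{o}s grid in \(\mathbb Z\), the degree-bounded polynomial grid in \(\mathbb F_p[\tau]\), and full planes over large \(\mathbb F_q\).
\item The case split is exhaustive. An infinite domain of characteristic \(p\) with no element transcendental over \(\mathbb F_p\) is a directed union of finite subfields, so it contains arbitrarily large finite fields.
\end{itemize}

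Each route has its advantage. The citation is brief, and it fixes the exact conventions of local trace definability (fixed \(m\), possibly varying parameters), which your self-contained argument must match. Your version shows exactly which features of Theorem~\ref{thm:absolute-main} are used: uniformity in the parameter, and arbitrary tuple lengths for the vertex classes.

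Drop the closing ``alternative.'' The relations \(x_1+x_2=x_3\) and \(ab=c\) on their own cannot work. Under every bipartition of the variables, each is essentially the graph of a function and has \(O(n)\) edges. A relation mixing both operations, such as your point--line incidence, is genuinely needed, and that incidence already covers all cases.
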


\begin{proof}
Fix a definable bipartite relation. By the definition of a
\(1\)-semi-equational theory, it is equivalent to a finite Boolean
combination of \((k_q,1)\)-semi-equations. If the combination has no
literals, adjoin a tautological \((2,1)\)-semi-equation and let the
Boolean function ignore it. Taking \(k=\max_q k_q\), every
one of these formulas is a \((k,1)\)-semi-equation.
Corollary~\ref{cor:bipartite-main} gives a bound of the form
\(O(n(1+\log(1+n))^c)\) for some \(c\) depending on the relation. For
every \(\varepsilon>0\),
\(n(1+\log(1+n))^c=O_{\varepsilon,c}(n^{1+\varepsilon})\).
Walsberg proves that this \(O(n^{1+\varepsilon})\) property is preserved
under local trace definability and that it fails in every infinite
domain~\cite[Lemma~3.4 and Proposition~3.8]{WalsbergTraceII}. The
conclusion follows.
\end{proof}

\begin{rem}\label{rem:one-based-comparison}
By \cite[Theorem~4.11]{ChernikovStarchenkoOneBased}, every relation
definable in a stable one-based theory has a linear Zarankiewicz bound.
Together with the results of Walsberg cited above, this independently
implies that a stable one-based theory does not locally trace-define an
infinite domain.
\end{rem}

The model-theoretic input enters in two places: through the finite
Boolean decomposition into \((k,1)\)-semiequations and, in higher
arity, through the uniform definability of intersections of fibers.
Section~\ref{sect:setup} establishes the laminar and co-laminar base
cases. Sections~\ref{sect:positive-induction} and
\ref{sect:negative-induction} develop the recursive extension method,
while Section~\ref{sect:sharpening} obtains the sharper estimate from a
low-crossing ordering. Section~\ref{sect:signed} treats signed families,
conjunctions, and Boolean combinations. Finally,
Section~\ref{sect:higher-arity} applies these combinatorial estimates to
definable hypergraphs and proves Theorem~\ref{thm:absolute-main} by induction on \(r\).

\subsection*{Acknowledgments}
We thank Erik Walsberg and Artem Chernikov for helpful discussions
concerning this work, and Mervyn Tong for valuable comments on an earlier
draft. We are grateful to Istv\'an Tomon for bringing the linear-size
bound for simple \(k\)-wise laminar families to our attention and for
suggesting its combination with a low-crossing ordering and a dyadic
interval decomposition, which leads to the improved bounds in
Corollary~\ref{cor:bipartite-main}. The authors used an AI-assisted tool
for language editing and consistency checks. All mathematical results
and arguments were independently proved by the authors. The authors take
full responsibility for the mathematical content and the final text.

\subsection*{Author contributions}
Hongyi Gou, Mihir Mittal, Chieu-Minh Tran, and Zhenyu Yang contributed
equally to the general combinatorial framework and the proof 
of the higher-arity result. Mostafa Mirabi contributed the direct signed-base
argument and the resulting improved bounds for constant base systems.

\section{Laminar and co-laminar base case}\label{sect:setup}

Fix integers \(t\geq2\) and \(B\geq0\), a finite set \(X\), and a
finite index set \(I\). An \emph{indexed set system on \(X\)} is a family
\(\mathcal F=(F_i)_{i\in I}\) with \(F_i\subseteq X\). Its weight is
\(w(\mathcal F)=\sum_{i\in I}|F_i|\).
If \(\mathcal A\subseteq2^X\), we also write \(\mathcal A\) for the
indexed system \((A)_{A\in\mathcal A}\).
If \(J\subseteq I\) and \(Y\subseteq X\), the \emph{induced restriction}
of \(\mathcal F\) to \((J,Y)\) is
\(\mathcal F\mathbin{|}(J,Y)=(F_i\cap Y)_{i\in J}\).
A system \(\mathcal G=(G_i)_{i\in I}\) is a \emph{subsystem} of \(\mathcal F\) if \(G_i\subseteq F_i\) for every \(i\in I\).

We say that \(\mathcal F\) has \emph{\(t\)-fold intersections bounded by
\(B\)} if \(|F_{i_1}\cap\cdots\cap F_{i_t}|\leq B\) for all distinct
\(i_1,\ldots,i_t\in I\). This property is preserved by
induced restrictions and by passage to a subsystem. Define
\[
 \Delta_B(Y,J)=|Y|+(B+1)|J|,
 \qquad N_B=\Delta_B(X,I),
 \qquad \ell_B=1+\log(1+N_B).
\]
The coefficient \(B+1\) ensures that \(\Delta_B\) decreases strictly
along every nonempty proper recursive child.

Let \(\mathcal C\) be a class of indexed set systems closed under
induced restrictions. From this point through Section~\ref{sect:signed},
assume that there is a fixed number
\(\Gamma\geq1\) such that, for every \(Y\subseteq X\), every
\(J\subseteq I\), and every system \(\mathcal G\in\mathcal C\) on
\((J,Y)\) whose \(t\)-fold intersections are bounded by \(B\),
\begin{equation}\label{eq:base-estimate}
 w(\mathcal G)\leq\Gamma\Delta_B(Y,J).
\end{equation}

\begin{lem}\label{lem:constant-class}
The class of constant systems, namely systems of the form
\((Y)_{i\in J}\), satisfies~\eqref{eq:base-estimate} with
\(\Gamma=t-1\).
\end{lem}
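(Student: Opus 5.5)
The plan is a direct case split on \(|J|\) compared with \(t\). First, the class of constant systems is closed under induced restrictions, since \((Y)_{i\in J}\mathbin{|}(J',Y')=(Y\cap Y')_{i\in J'}\) is again constant. So it remains to verify~\eqref{eq:base-estimate}. Fix \(Y\subseteq X\), \(J\subseteq I\), and a constant system \(\mathcal G=(G)_{i\in J}\) on \((J,Y)\) with \(G\subseteq Y\) and \(t\)-fold intersections bounded by \(B\). Its weight is \(w(\mathcal G)=|J|\,|G|\), and the goal is the bound \(|J|\,|G|\leq(t-1)\bigl(|Y|+(B+1)|J|\bigr)\).

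\emph{Case \(|J|\geq t\).} Choose \(t\) distinct indices. Their common intersection is \(G\) itself, so the hypothesis gives \(|G|\leq B\). Then \(w(\mathcal G)=|J|\,|G|\leq B|J|\leq(B+1)|J|\), which is at most \((t-1)\Delta_B(Y,J)\) because \(t-1\geq1\).

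\emph{Case \(|J|\leq t-1\).} Here the intersection hypothesis is vacuous, but \(|G|\leq|Y|\) still holds. Hence \(w(\mathcal G)\leq(t-1)|Y|\leq(t-1)\Delta_B(Y,J)\).

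Combining the two cases gives~\eqref{eq:base-estimate} with \(\Gamma=t-1\geq1\). The only step that needs care is the second case, where there is no intersection bound to use. There the constant \(t-1\) has to absorb the trivial bound \(|G|\leq|Y|\), and this is exactly why \(\Gamma=t-1\), rather than \(1\), is the right constant.
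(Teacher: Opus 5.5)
Your proof is correct and follows the paper's argument. It uses the same split into \(|J|<t\), where the factor \(t-1\) absorbs the trivial bound \(|G|\leq|Y|\), and \(|J|\geq t\), where the \(t\)-fold intersection of a constant system is the constant set itself, so \(|G|\leq B\). Writing the constant set as some \(G\subseteq Y\) rather than \(G=Y\) is slightly more careful than the paper, since that is exactly the form produced by induced restrictions.
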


\begin{proof}
Suppose that \((Y)_{i\in J}\) has \(t\)-fold intersections bounded by
\(B\). If \(|J|<t\), then
\(|J||Y|\leq(t-1)|Y|\leq(t-1)\Delta_B(Y,J)\).
If \(|J|\geq t\), then \(|Y|\leq B\), and hence
\(|J||Y|\leq B|J|\leq\Delta_B(Y,J)\).
The class is closed under induced restrictions, so the assertion follows.
\end{proof}

An indexed family \(\mathcal D=(D_i)_{i\in I}\) is
\emph{co-laminar} if \((X\setminus D_i)_{i\in I}\) is laminar.
Empty sets and repeated members are allowed in both definitions.

\begin{prop}\label{prop:base-extension}
Let \(\mathcal F'=(F'_i)_{i\in I}\in\mathcal C\), and let
\(\mathcal D=(D_i)_{i\in I}\) be laminar or co-laminar. Suppose that
\(F_i=F'_i\cap D_i\) for \(i\in I\), defining a system
\(\mathcal F=(F_i)_{i\in I}\) whose \(t\)-fold
intersections are bounded by \(B\). Then
\[
 w(\mathcal F)\leq C_t\Gamma N_B\ell_B.
\]
More precisely, for every \(J\subseteq I\) and \(Y\subseteq X\),
\[
 w\bigl(\mathcal F\mathbin{|}(J,Y)\bigr)
 \leq C_t\Gamma\Delta_B(Y,J)\ell_B.
\]
\end{prop}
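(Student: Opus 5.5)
The plan is to prove the refined statement, for arbitrary \(J\subseteq I\) and \(Y\subseteq X\), by a divide-and-conquer recursion on the ground set \(Y\). At every node of the recursion the points are split into pieces of at most half the size. Each index is then either passed to exactly one child, or charged once to a single application of \eqref{eq:base-estimate}. Two facts make this work. First, induced restrictions of \(\mathcal F'\) stay in \(\mathcal C\). Second, the bound on \(t\)-fold intersections passes to induced restrictions and to subsystems. So on any piece \(Z\subseteq Y\) where \(F_i\cap Z=F'_i\cap Z\) for all \(i\) in some \(J_0\), we get \(w(\mathcal F\mathbin{|}(J_0,Z))\le\Gamma\Delta_B(Z,J_0)\). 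The recursion depth is \(O(\log(1+|Y|))\le O(\ell_B)\). At each level the pieces \(Z\) are disjoint, and each index is charged at most once, so the base charges at one level sum to at most \(2\Gamma\Delta_B(Y,J)\). Summing over levels gives \(C_t\Gamma\Delta_B(Y,J)\ell_B\). The leaves have \(|Y|\le1\) and contribute at most \(|J|\le\Delta_B(Y,J)\), which is at most \(\Gamma\Delta_B\) since \(\Gamma\ge1\).

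\emph{Laminar case.} We have \(F_i\subseteq D_i\), and the traces \(D_i\cap Y\) for \(i\in J\) form a laminar family, hence a forest under inclusion. If some \(D_i\cap Y\) has more than \(|Y|/2\) points, let \(D^*\) be an inclusion-minimal such trace. Every \(i\in J\) then satisfies exactly one of three alternatives: \(D_i\cap Y\supseteq D^*\) (class \(J_{\supseteq}\)), \(D_i\cap Y\subsetneq D^*\), or \(D_i\cap D^*=\emptyset\).
\begin{itemize}
\item For \(i\in J_{\supseteq}\), the part of \(F_i\) inside \(D^*\) equals \(F'_i\cap D^*\). This part is charged once to the base estimate on \((J_{\supseteq},D^*)\).
\item The parts of \(F_i\) outside \(D^*\), for \(i\in J_{\supseteq}\) and for \(i\) with \(D_i\cap D^*=\emptyset\), go to one recursive call on \(Y\setminus D^*\). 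That set has fewer than \(|Y|/2\) points.
\item The indices with \(D_i\cap Y\subsetneq D^*\) are each contained in a unique maximal proper sub-trace of \(D^*\). By minimality of \(D^*\), each such sub-trace has at most \(|Y|/2\) points. We recurse separately on each of them with the corresponding indices, which are disjoint groups.
\end{itemize}
If no trace exceeds \(|Y|/2\), we recurse directly on the maximal traces; points outside all \(D_i\) carry no weight.

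\emph{Co-laminar case.} Write \(F_i=F'_i\setminus E_i\) with \((E_i)\) laminar, and pick \(E^*\) in the same way. The roles are reversed.
\begin{itemize}
\item Indices with \(E_i\supseteq E^*\) contribute nothing inside \(E^*\), and we recurse on them outside \(E^*\).
\item Indices with \(E_i\subsetneq E^*\) satisfy \(F_i=F'_i\) on \(Y\setminus E^*\). That part is one base charge; inside \(E^*\) we recurse into the children of \(E^*\).
\item Indices with \(E_i\cap E^*=\emptyset\) satisfy \(F_i=F'_i\) on \(E^*\). That part is one base charge, and we recurse on them outside \(E^*\).
\end{itemize}
The remaining points of \(E^*\) that lie in no child are handled by the base charge for the indices inside \(E^*\), where the \(E_i\) vanish.

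The main obstacle I expect is the bookkeeping that makes the per-level base charges sum to \(O(\Delta_B(Y,J))\) and not \(O(\Delta_B)\) times the number of nodes. An index passed to the outside branch may also have been base-charged at the same node, so I must check that every index is charged at most twice per level. I must also check that the point sets charged at one level are disjoint. The second point to verify is that the recursion really halves \(|Y|\) in every branch, including the outside branch \(Y\setminus D^*\) and the degenerate case with no large trace, so that the depth is at most \(\log_2(1+|Y|)+1\le C\ell_B\).
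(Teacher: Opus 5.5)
Your laminar case is correct, and it takes a different route from the paper. You recurse along the laminar forest, choosing an inclusion-minimal trace \(D^*\) with more than \(|Y|/2\) points. The paper instead fixes a linear order on \(X\) in which every member is an interval, and decomposes each \(D_i\) into at most \(2h\) canonical nodes of a balanced binary interval tree. In your recursion each index passes to exactly one child and receives at most one base charge per node, and the ground sets at a level are disjoint. So the per-level charge is at most \(\Gamma\Delta_B(Y,J)\), not twice that. The unbounded branching into the children of \(D^*\) is harmless here, because \(F_i\subseteq D_i\) puts every incidence of an index with \(D_i\cap Y\subsetneq D^*\) inside its own child.

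The co-laminar case has a genuine gap, and it is exactly this branching. Suppose \(E_i\cap Y\) lies in the child \(C_j\) of \(E^*\). Then \(F_i=F'_i\) on every sibling \(C_{j'}\), \(j'\neq j\). Those incidences are charged neither by the recursive call on \(C_j\) nor by your base charge on \(E^*\setminus\bigcup_j C_j\), so the recurrence you write is false. For complements these cross incidences are typically most of the weight. Sending every index of \(J_{\subsetneq}\) into every child does not fix it: the index term \((B+1)|J|\) at the next level is then multiplied by the number of children of \(E^*\), which can be of order \(|Y|\). The same failure occurs when no trace exceeds \(|Y|/2\). For complements, points outside all \(E_i\) carry the full weight of \(F'_i\), and an index whose trace lies in one maximal trace has incidences in all the others. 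So "recurse on the maximal traces" does not transfer from the laminar case.

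The missing ingredient is bounded branching. One repair within your scheme is to keep \((J_{\subsetneq},E^*)\) as a single inside node rather than splitting it into the children of \(E^*\).
\begin{itemize}
\item Charge the base estimate on \((J_{\subsetneq},Y\setminus E^*)\) and on \((J_{\emptyset},E^*)\).
\item Recurse on \((J_{\subsetneq},E^*)\) and on \((J_{\supseteq}\cup J_{\emptyset},Y\setminus E^*)\).
\end{itemize}
Index sets and ground sets then stay disjoint across each level. The depth is still logarithmic: every trace strictly inside \(E^*\) has at most \(|Y|/2\) points, so the ground set shrinks by a factor of at least \(3/4\) within any two steps. When no trace exceeds \(|Y|/2\), group the maximal traces and free points into two parts of size at most \(3|Y|/4\). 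Charge each index once on the part missing its trace, and charge indices with empty trace on all of \(Y\).

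The paper gets the same effect uniformly. In its interval order each \(D_i=X\setminus K_i\) is a union of at most two intervals, hence of at most \(4h\) canonical nodes of a binary tree.
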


The interval representation and balanced interval-tree decomposition in
the following proof use the laminar-family counting construction from
\cite[Section~4, Lemmas~4.1 and~4.2]{gou2026zarankiewiczsboundsvaluedvector}.

\begin{proof}
If \(X=\varnothing\), then \(w(\mathcal F)=0\). Assume
\(X\neq\varnothing\), and set \(h=1+\lceil\log_2|X|\rceil\).

First suppose that \(\mathcal D\) is laminar. Pass to the family of
distinct nonempty sets occurring among the \(D_i\). Its strict-inclusion
relation is a forest: distinct maximal members are disjoint, and each
nonmaximal member has a unique minimal strict superset in the family. For
each member \(A\), its children are pairwise disjoint. Order the children
arbitrarily and recursively list first the points of
\(A\setminus\bigcup\{C:C\text{ is a child of }A\}\), and then the point
lists of its children. Concatenate the lists of the maximal members and
then the points outside their union. The resulting linear order on \(X\)
makes every member of the family an interval. Repeated members use the
same interval, and empty members require no interval. This finite order is
used only for the incidence decomposition below; it need not be definable
or compatible with any order on the index set.

Form the balanced binary interval tree by recursively splitting every
non\-singleton interval into two consecutive intervals whose
cardinalities differ by at most one. For every interval \(Q\subseteq
X\), let \(\mathcal K(Q)\) be the set of maximal tree nodes whose
associated intervals \(C_u\) lie in \(Q\). The intervals
\((C_u)_{u\in\mathcal K(Q)}\) are disjoint and cover \(Q\). At each
depth, only the two nodes meeting the boundary of \(Q\) can have
descendants in \(\mathcal K(Q)\), and therefore
\(|\mathcal K(Q)|\leq2h\). For each \(i\in I\), put
\(\mathcal K_i=\mathcal K(D_i)\), and, for every tree node \(u\), put
\(I_u=\{i\in I:u\in\mathcal K_i\}\).
Every incidence of \(\mathcal F\) belongs to a unique block
\(I_u\times C_u\). Moreover, \(C_u\subseteq D_i\) for \(i\in I_u\), so
\[
 \mathcal F\mathbin{|}(I_u,C_u)
 =\mathcal F'\mathbin{|}(I_u,C_u).
\]
The left-hand side has \(t\)-fold intersections bounded by \(B\), while
the right-hand side belongs to \(\mathcal C\). By~\eqref{eq:base-estimate},
\(w(\mathcal F)\leq\Gamma\sum_u\Delta_B(C_u,I_u)\). At each depth the
\(C_u\) are disjoint, so \(\sum_u|C_u|\leq h|X|\).
Moreover, \(\sum_u|I_u|\leq2h|I|\). Thus
\(w(\mathcal F)\leq C\Gamma hN_B\).

Now suppose that \(\mathcal D\) is co-laminar, and put
\(K_i=X\setminus D_i\). Apply the preceding ordering to the laminar family
\((K_i)_{i\in I}\). Each \(D_i\) is the union of at most two intervals,
so it has a canonical decomposition into at most \(4h\) tree nodes.
Applying~\eqref{eq:base-estimate} to the resulting blocks gives
\(w(\mathcal F)\leq C\Gamma hN_B\).
Since \(h=O(\ell_B)\), the asserted estimate follows in both cases. For
an induced restriction to \((J,Y)\), apply the construction to the ground
set \(Y\) and the restricted family \((D_i\cap Y)_{i\in J}\). Its interval
tree has height at most
\(1+\lceil\log_2(1+|Y|)\rceil=O(\ell_B)\), and the two preceding sums are
bounded by that height times \(|Y|\) and \(|J|\), respectively. Thus
\(w(\mathcal F\mathbin{|}(J,Y))
\leq C_t\Gamma\Delta_B(Y,J)\ell_B\), as required.
\end{proof}

\section[Extension by k-wise laminar families]
{Extension by \texorpdfstring{\(k\)}{k}-wise laminar families}
\label{sect:positive-induction}

For \(k\geq2\), an indexed family \(\mathcal D=(D_i)_{i\in I}\) is
\(k\)-wise laminar if, for all distinct \(i_1,\ldots,i_k\in I\),
\(D_{i_1}\cap\cdots\cap D_{i_k}\neq\varnothing\) implies that two of the
selected sets are comparable under inclusion. Restrictions to a subset of
the ground set preserve this property.

For a number \(\Lambda\geq1\), let \(\mathsf L_k(\Lambda)\) denote the
following extension property. There is a constant \(A_{k,t}\) such that,
for every \(J\subseteq I\), \(Y\subseteq X\), every base system
\(\mathcal F'\in\mathcal C\) on \((J,Y)\), and every \(k\)-wise laminar
family \(\mathcal L=(L_i)_{i\in J}\) on \(Y\), if
\((F'_i\cap L_i)_{i\in J}\) has \(t\)-fold intersections bounded by \(B\),
then its weight is at most
\begin{equation}\label{eq:Lk-property}
 A_{k,t}\Gamma\Delta_B(Y,J)\Lambda.
\end{equation}
Proposition~\ref{prop:base-extension} gives \(\mathsf L_2(\ell_B)\).

We prove that passage from \(k\) to \(k+1\) introduces one additional
factor \(\ell_B\). Let \(\mathcal F'=(F'_i)_{i\in I}\in\mathcal C\), let
\(\mathcal D=(D_i)_{i\in I}\) be \((k+1)\)-wise laminar, and suppose that
\(F_i=F'_i\cap D_i\) defines a system \(\mathcal F=(F_i)_{i\in I}\) whose
\(t\)-fold
intersections are bounded by \(B\).

For \(J\subseteq I\) and \(Y\subseteq X\), write
\(W(J,Y)=\sum_{i\in J}|F_i\cap Y|\) and
\(p(J,Y)=\Delta_B(Y,J)\).
For a pair \(v=(J,Y)\), we also write \(W(v)=W(J,Y)\) and
\(p(v)=p(J,Y)\).
Whenever a pair \((J,Y)\) occurs below, indices satisfying \(D_i\cap Y=\varnothing\) are deleted. This leaves \(W(J,Y)\) unchanged and does not increase \(p(J,Y)\).
A pair is \emph{terminal} if its index set or ground set is empty after
this deletion.

Fix a nonempty pair \((J,Y)\) after this deletion. Choose \(a\in J\) such
that \(U=D_a\cap Y\) is maximal under inclusion among the sets
\(D_i\cap Y\), \(i\in J\). Define
\[
 \begin{aligned}
 J_{\mathrm{in}}&=\{i\in J:D_i\cap Y\subsetneq U\},\\
 J_{\mathrm{out}}&=\{i\in J:(D_i\cap Y)\cap U=\varnothing\},\\
 J_{\mathrm{eq}}&=\{i\in J:D_i\cap Y=U\},\\
 J_{\times}&=J\setminus
 (J_{\mathrm{in}}\cup J_{\mathrm{out}}\cup J_{\mathrm{eq}}).
 \end{aligned}
\]
These four classes partition \(J\). If \(i\in J_{\times}\), then
\(D_i\cap Y\) meets both \(U\) and \(Y\setminus U\); moreover, maximality
of \(U\) excludes \(U\subsetneq D_i\cap Y\). Thus every member of
\(J_{\times}\) crosses \(U\). The two recursive pairs are
\((J_{\mathrm{in}},U)\) and
\((J_{\mathrm{out}}\cup J_{\times},Y\setminus U)\), after indices with
empty restrictions are deleted. Equality indices are absent from the
outside child because their restrictions to \(Y\setminus U\) are empty.
They satisfy
\begin{equation}\label{eq:child-mass}
 p(J_{\mathrm{in}},U)
 +p(J_{\mathrm{out}}\cup J_{\times},Y\setminus U)
 \leq p(J,Y).
\end{equation}

We call \((J_{\mathrm{in}},U)\) the \emph{inside child} and
\((J_{\mathrm{out}}\cup J_{\times},Y\setminus U)\) the
\emph{outside child}. These terms refer only to the side of the
partition of \(Y\) on which the recursive pair is supported.

\begin{lem}\label{lem:one-step-reduction}
For \(i\in J_{\mathrm{eq}}\cup J_{\times}\), put
\(E_i=(D_i\cap Y)\cap U\). Then
\((E_i)_{i\in J_{\mathrm{eq}}\cup J_{\times}}\) is \(k\)-wise
laminar.
\end{lem}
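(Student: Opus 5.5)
The plan is to argue by contradiction, adding the index \(a\) to any bad \(k\)-tuple to produce a bad \((k+1)\)-tuple for \(\mathcal D\). Suppose \(i_1,\ldots,i_k\in J_{\mathrm{eq}}\cup J_{\times}\) are distinct, with \(E_{i_1}\cap\cdots\cap E_{i_k}\neq\varnothing\), and no two of \(E_{i_1},\ldots,E_{i_k}\) are comparable. We must produce a contradiction with the \((k+1)\)-wise laminarity of \(\mathcal D\).

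The first observation is that at most one of the \(i_j\) lies in \(J_{\mathrm{eq}}\). For \(i\in J_{\mathrm{eq}}\) we have \(E_i=U\), and every \(E_i\) is contained in \(U\). So if some \(i_j\in J_{\mathrm{eq}}\), then every other \(E_{i_l}\subseteq E_{i_j}\), which is a comparability (for \(k\geq2\) there is another index). Hence all \(i_j\) lie in \(J_{\times}\).

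The main step considers the \(k+1\) distinct indices \(a,i_1,\ldots,i_k\). Here \(a\notin J_{\times}\), since \(D_a\cap Y=U\) puts \(a\in J_{\mathrm{eq}}\). Any point \(x\) of \(E_{i_1}\cap\cdots\cap E_{i_k}\) lies in \(U\subseteq D_a\) and in each \(D_{i_j}\). So \(D_a\cap D_{i_1}\cap\cdots\cap D_{i_k}\neq\varnothing\), and \((k+1)\)-wise laminarity gives two comparable sets among \(D_a,D_{i_1},\ldots,D_{i_k}\). We then rule out each possible comparable pair:

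\begin{itemize}
\item \(D_{i_j}\subseteq D_{i_l}\) implies \(E_{i_j}\subseteq E_{i_l}\) after intersecting with \(Y\cap U\), contradicting the assumption that no two \(E\)'s are comparable.
\item \(D_a\subseteq D_{i_j}\) gives \(U\subseteq D_{i_j}\cap Y\). By maximality of \(U\) this forces \(D_{i_j}\cap Y=U\), which contradicts \(i_j\in J_{\times}\).
\item \(D_{i_j}\subseteq D_a\) gives \(D_{i_j}\cap Y\subseteq U\), so \(i_j\in J_{\mathrm{in}}\cup J_{\mathrm{eq}}\), again contradicting \(i_j\in J_{\times}\).
\end{itemize}

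Every case is contradictory, so \((E_i)\) is \(k\)-wise laminar.

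The only delicate point is the bookkeeping on index distinctness and on comparabilities being measured for \(D_i\) versus \(D_i\cap Y\). This is handled by the fact that comparability of the \(D\)'s passes to their traces on \(Y\) and on \(U\), together with the definitions of the four classes. Repeated equal members are also fine, since equal sets count as comparable and the argument uses distinct indices rather than distinct sets.
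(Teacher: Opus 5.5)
Your proof is correct and matches the paper's argument. You dispose of indices in \(J_{\mathrm{eq}}\), whose trace \(U\) contains every \(E_i\); you adjoin the pivot \(a\), which is distinct from the crossing indices; maximality of \(U\) and a crossing point outside \(U\) make each \(D_{i_j}\) incomparable with \(D_a\); and \((k+1)\)-wise laminarity then forces a comparable pair among the \(D_{i_j}\), which passes to the \(E_{i_j}\).

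The difference from the paper is cosmetic: you argue by contradiction. Under your standing assumption, your ``at most one'' index in \(J_{\mathrm{eq}}\) should read ``none'', as your next sentence in fact shows.
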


\begin{proof}
Let \(i_1,\ldots,i_k\in J_{\mathrm{eq}}\cup J_{\times}\) be distinct
and suppose that the \(E_{i_j}\) have nonempty common intersection. If
some \(i_j\) belongs to \(J_{\mathrm{eq}}\), then \(E_{i_j}=U\) contains
every selected set. We may therefore assume that every \(i_j\) belongs
to \(J_{\times}\).

Each \(D_{i_j}\cap Y\) crosses \(U\). A point in
\((D_{i_j}\cap Y)\setminus U\) shows that
\(D_{i_j}\nsubseteq D_a\), while \(D_a\subseteq D_{i_j}\) would imply
\(U\subseteq D_{i_j}\cap Y\), contrary to maximality and
\(i_j\notin J_{\mathrm{eq}}\). Hence \(D_{i_j}\) and \(D_a\) are
incomparable.

The indices \(i_1,\ldots,i_k,a\) are distinct, and their sets have nonempty common intersection. Since \(\mathcal D\) is \((k+1)\)-wise laminar, two of these sets are comparable. No selected set is comparable with \(D_a\), so two of \(D_{i_1},\ldots,D_{i_k}\) are comparable. Intersecting with \(Y\cap U\) preserves the inclusion.
\end{proof}

Assume \(\mathsf L_k(\Lambda)\). The system
\(\mathcal E=(F'_i\cap E_i)_{i\in J_{\mathrm{eq}}\cup J_{\times}}\)
is an induced
restriction of \(\mathcal F\), so its \(t\)-fold
intersections are bounded by \(B\). Lemma~\ref{lem:one-step-reduction}
and~\eqref{eq:Lk-property} therefore give
\(w(\mathcal E)\leq A_{k,t}\Gamma p(J,Y)\Lambda\).
Consequently,
\begin{equation}\label{eq:positive-recurrence}
 \begin{split}
 W(J,Y)\leq{}&W(J_{\mathrm{in}},U)
 +W(J_{\mathrm{out}}\cup J_{\times},Y\setminus U)\\
 &+A_{k,t}\Gamma p(J,Y)\Lambda.
 \end{split}
\end{equation}

A child of \((J,Y)\) is \emph{heavy} if its parameter exceeds
\(3p(J,Y)/4\). By~\eqref{eq:child-mass}, at most one child is heavy. A
pair is \emph{balanced} if neither child is heavy. A \emph{heavy
branch} is obtained by repeatedly passing to the unique heavy child.
A step on such a branch is an \emph{inside step} or an \emph{outside
step} according to which child is followed. At a balanced pair,
~\eqref{eq:positive-recurrence} has total recursive parameter at most
\(p(J,Y)\), and every recursive parameter is at most \(3p(J,Y)/4\).
Starting from \(v_0\), a heavy branch is stopped at the first pair
\(v_s\) that is terminal, is balanced, or satisfies
\(p(v_s)\leq 3p(v_0)/4\). Such a pair exists because \(p\) is a
positive integer and decreases strictly whenever the recursion passes
to a nonempty child.

The next two lemmas use only that each successive pair is an inside or
outside child of its predecessor; their proofs do not use heaviness.
Consider a heavy branch
\[
 v_0=(J_0,Y_0)\longrightarrow\cdots\longrightarrow
 v_s=(J_s,Y_s).
\]
At step \(q<s\), write \(U_q=D_{a_q}\cap Y_q\). Let \(\mathcal O\)
be the set of outside steps. Lemma~\ref{lem:surviving-indices} controls
the incidences of indices in \(J_s\) on \(Y_0\setminus Y_s\), while
Lemma~\ref{lem:deleted-indices} controls the incidences on \(Y_s\) of
indices that leave the branch before its terminal pair.

\begin{lem}\label{lem:surviving-indices}
Put \(I_1=J_s\), \(X_1=Y_s\), and \(X_2=Y_0\setminus Y_s\).
For \(i\in I_1\), define
\[
 P_i=\bigcup_{\substack{q\in\mathcal O\\i\in J_{\times}^q}}
 (D_i\cap U_q).
\]
Then \(P_i=D_i\cap X_2\), and consequently
\(F_i\cap X_2=F'_i\cap P_i\). Moreover, the nonempty members of
\((P_i)_{i\in I_1}\) form a \(k\)-wise laminar family on \(X_2\).
\end{lem}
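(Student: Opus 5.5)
The plan is to track how \(Y_0\) is peeled off along the branch. At step \(q\) we have \(Y_q\supseteq U_q\), and the next ground set is determined by the type of step. At an inside step, \(Y_{q+1}=U_q\) and the discarded part is \(Y_q\setminus U_q\). At an outside step, \(Y_{q+1}=Y_q\setminus U_q\) and the discarded part is \(U_q\). By induction on \(q\), the set \(X_2=Y_0\setminus Y_s\) is the disjoint union of these discarded parts over \(q<s\). In particular, the sets \(U_q\) for \(q\in\mathcal O\) are pairwise disjoint subsets of \(X_2\).

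Fix \(i\in I_1=J_s\). Since \(i\) survives to the end, \(i\in J_{q+1}\) for every \(q<s\), so its membership class at each step is constrained.
\begin{itemize}
\item At an inside step, \(i\in J^q_{\mathrm{in}}\), so \(D_i\cap Y_q\subsetneq U_q\). Hence \(D_i\) misses the discarded part \(Y_q\setminus U_q\).
\item At an outside step, \(i\in J^q_{\mathrm{out}}\cup J^q_{\times}\). If \(i\in J^q_{\mathrm{out}}\), then \(D_i\cap U_q=\varnothing\), so again nothing is contributed. If \(i\in J^q_{\times}\), the contribution of the discarded part is exactly \(D_i\cap U_q\).
\end{itemize}
One should note that deleting indices with \(D_i\cap Y=\varnothing\) only removes indices, so it does not affect a surviving \(i\). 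Summing over the disjoint pieces gives \(D_i\cap X_2=P_i\). Then \(F_i\cap X_2=F'_i\cap D_i\cap X_2=F'_i\cap P_i\).

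For \(k\)-wise laminarity, take distinct \(i_1,\ldots,i_k\in I_1\) whose sets \(P_{i_j}\) share a point \(x\). The outside pieces are disjoint, so \(x\) lies in a single \(U_q\) with \(q\in\mathcal O\). Since \(x\in P_{i_j}\cap U_q=D_{i_j}\cap U_q\), the definition of \(P_{i_j}\) forces \(i_j\in J^q_{\times}\) for every \(j\).

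This is the main step: we now replay the argument of Lemma~\ref{lem:one-step-reduction} at step \(q\), with \(a=a_q\).
\begin{itemize}
\item Each \(D_{i_j}\) is incomparable with \(D_{a_q}\). A point of \((D_{i_j}\cap Y_q)\setminus U_q\) rules out \(D_{i_j}\subseteq D_{a_q}\), and maximality of \(U_q\) together with \(i_j\notin J^q_{\mathrm{eq}}\) rules out \(D_{a_q}\subseteq D_{i_j}\).
\item The indices \(a_q,i_1,\ldots,i_k\) are distinct, since \(a_q\in J^q_{\mathrm{eq}}\).
\item The sets \(D_{a_q},D_{i_1},\ldots,D_{i_k}\) all contain \(x\).
\end{itemize}
By \((k+1)\)-wise laminarity two of these sets are comparable, and by the first point neither of them is \(D_{a_q}\). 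So some \(D_{i_j}\subseteq D_{i_{j'}}\), and intersecting with \(X_2\) gives \(P_{i_j}\subseteq P_{i_{j'}}\) via the identity \(P_i=D_i\cap X_2\).

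The only delicate point is ensuring a single step \(q\) witnesses the common point, and this is exactly what the disjointness of the outside pieces provides. Restricting to the nonempty members is harmless.
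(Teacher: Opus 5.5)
Your proof is correct and follows the paper's argument. You write \(X_2\) as the disjoint union of the pieces \(Y_q\setminus Y_{q+1}\), do the inside/outside case analysis for a surviving index, place a common point in a unique outside pivot \(U_q\), and combine incomparability with \(D_{a_q}\) with \((k+1)\)-wise laminarity. The only difference is at the end: you get \(P_{i_j}\subseteq P_{i_{j'}}\) directly from the identity \(P_i=D_i\cap X_2\), whereas the paper re-checks termwise that \(D_{i_\gamma}\) crosses every outside pivot crossed by \(D_{i_\alpha}\). Your shortcut is valid and slightly cleaner, and you also make explicit that \(a_q\) is distinct from the \(i_j\), which the paper leaves implicit in this lemma.
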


\begin{proof}
For \(q<s\), put \(Z_q=Y_q\setminus Y_{q+1}\). These sets are
pairwise disjoint and have union \(X_2\). Fix \(i\in I_1\). Since
\(i\) survives to \(J_s\), it survives every step of the branch.

If \(q\) is an inside step, then \(Y_{q+1}=U_q\), and the survival
of \(i\) implies \(D_i\cap Y_q\subsetneq U_q\). Hence
\(D_i\cap Z_q=\varnothing\). If \(q\) is an outside step, then
\(Z_q=U_q\). Since the outside child is indexed by
\(J_{\mathrm{out}}^q\cup J_{\times}^q\), a surviving index belongs to
one of these two classes. If \(i\in J_{\mathrm{out}}^q\), then
\(D_i\cap Z_q=\varnothing\), whereas if \(i\in J_{\times}^q\), then
\(D_i\cap Z_q=D_i\cap U_q\). It follows that
\[
 D_i\cap X_2
 =\bigcup_{q<s}(D_i\cap Z_q)
 =\bigcup_{\substack{q\in\mathcal O\\i\in J_{\times}^q}}
   (D_i\cap U_q)
 =P_i.
\]
The identity \(F_i\cap X_2=F'_i\cap P_i\) is immediate.

Suppose that distinct \(i_1,\ldots,i_k\in I_1\) satisfy
\(P_{i_1}\cap\cdots\cap P_{i_k}\neq\varnothing\). The sets removed
at outside steps are pairwise disjoint, so the common point belongs
to a unique \(U_q\), where \(q\in\mathcal O\). Every \(i_j\) belongs
to \(J_{\times}^q\), and hence \(D_{i_j}\cap Y_q\) crosses \(U_q\).

The sets \(D_{i_1},\ldots,D_{i_k}\) are therefore incomparable with
the pivot \(D_{a_q}\). They have a common point with \(D_{a_q}\), so
\((k+1)\)-wise laminarity gives comparable sets among
\(D_{i_1},\ldots,D_{i_k}\), say
\(D_{i_\alpha}\subseteq D_{i_\gamma}\).

At every outside step \(q'\) at which \(i_\alpha\) crosses \(U_{q'}\),
the set \(D_{i_\gamma}\cap Y_{q'}\) also crosses \(U_{q'}\): it
contains points on both sides of \(U_{q'}\), and maximality prevents
it from strictly containing \(U_{q'}\). Therefore every term in
\(P_{i_\alpha}\) is contained in the corresponding term of
\(P_{i_\gamma}\), and \(P_{i_\alpha}\subseteq P_{i_\gamma}\).
\end{proof}

\begin{lem}\label{lem:deleted-indices}
With the same branch, put \(I_2=J_0\setminus J_s\) and \(X_1=Y_s\).
For \(i\in I_2\), define
\[
 R_i=
 \begin{cases}
 D_i\cap X_1,&F_i\cap X_1\neq\varnothing,\\
 \varnothing,&F_i\cap X_1=\varnothing.
 \end{cases}
\]
Then \(F_i\cap X_1=F'_i\cap R_i\), and the nonempty members of
\((R_i)_{i\in I_2}\) form a \(k\)-wise laminar family. The same
conclusion holds for all nonempty sets \(D_i\cap X_1\), \(i\in I_2\).
\end{lem}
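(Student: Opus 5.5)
The plan is to analyse, for each \(i\in I_2\), the step at which \(i\) leaves the branch, and to show that \(D_i\cap X_1\) is then either empty, all of \(X_1\), or the trace of a set crossing a pivot at an inside step. The identity \(F_i\cap X_1=F'_i\cap R_i\) is immediate from \(F_i=F'_i\cap D_i\) and the definition of \(R_i\). Also, the nonempty \(R_i\) are among the nonempty \(D_i\cap X_1\), so it suffices to prove the last assertion.

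\emph{Case analysis.} Fix \(i\in I_2\) and let \(q<s\) be the last step with \(i\in J_q\), so \(i\notin J_{q+1}\). Throughout, \(X_1\subseteq Y_{q+1}\).
\begin{itemize}
\item If \(i\) was deleted because \(D_i\cap Y_{q+1}=\varnothing\), then \(D_i\cap X_1=\varnothing\).
\item If \(q\) is an outside step, then \(i\in J^q_{\mathrm{in}}\cup J^q_{\mathrm{eq}}\), so \(D_i\cap Y_q\subseteq U_q\), which is disjoint from \(Y_{q+1}\). Again \(D_i\cap X_1=\varnothing\).
\item If \(q\) is an inside step, then \(Y_{q+1}=U_q\) and \(i\in J^q_{\mathrm{out}}\cup J^q_{\mathrm{eq}}\cup J^q_\times\).
  \begin{itemize}
  \item For \(i\in J^q_{\mathrm{out}}\), the set \(D_i\cap X_1\) is empty.
  \item For \(i\in J^q_{\mathrm{eq}}\), we have \(D_i\cap X_1=X_1\), since \(D_i\cap Y_q=U_q\supseteq X_1\).
  \item For \(i\in J^q_\times\), the set \(D_i\cap Y_q\) crosses \(U_q\). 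As in Lemma~\ref{lem:one-step-reduction}, \(D_i\) is then incomparable with \(D_{a_q}\).
  \end{itemize}
\end{itemize}
Write \(q(i)\) for this leaving step.

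\emph{Laminarity.} Take distinct \(i_1,\ldots,i_k\in I_2\) with a common point \(x\in X_1\) of the sets \(D_{i_j}\cap X_1\). If some \(D_{i_j}\cap X_1=X_1\), that set contains all the others, and we are done. Otherwise every \(i_j\) is a crossing index at the inside step \(q_j=q(i_j)\).

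Set \(q=\max_j q_j\) and use the pivot \(a=a_q\). The index \(a\) lies in \(J_q\), and \(x\in X_1\subseteq U_q\subseteq D_a\). So \(D_{i_1},\ldots,D_{i_k},D_a\) have a common point, and the indices are distinct because \(a\) survives into the inside child at step \(q\) while each \(i_j\) does not. By \((k+1)\)-wise laminarity, two of these \(k+1\) sets are comparable. If the comparable pair lies among the \(D_{i_j}\), intersecting with \(X_1\) finishes the proof. It therefore remains to rule out comparability of \(D_a\) with some \(D_{i_j}\).
\begin{itemize}
\item For \(q_j=q\), incomparability was shown above.
\item For \(q_j<q\), the index \(a\) survived the inside step \(q_j\), so \(D_a\cap Y_{q_j}\subsetneq U_{q_j}\). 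Meanwhile \(D_{i_j}\) has a point in \(Y_{q_j}\setminus U_{q_j}\). This excludes \(D_{i_j}\subseteq D_a\).
\end{itemize}

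\emph{Main obstacle.} The hard step is excluding \(D_a\subseteq D_{i_j}\) when \(q_j<q\); nothing in the setup immediately forbids it. If the direct pivot argument fails, I would first try to handle this case by an induction along the branch. The claim would be that for each \(r\le s\), the nonempty sets \(D_i\cap Y_r\) with \(i\in J_0\setminus J_r\) form a \(k\)-wise laminar family. In the inductive step, sets that left earlier are restricted further, which preserves laminarity. Newly departing crossing sets are handled with the pivot \(a_r\) as in Lemma~\ref{lem:one-step-reduction}. A mixed \(k\)-tuple of old and new sets would be treated by choosing as pivot the newest crossing set \(D_{i_j}\) in place of \(D_a\), or by replacing \(D_a\) with a set that contains it and is witnessed as containing \(U_q\).
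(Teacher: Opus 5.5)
\textbf{Gap: one incomparability direction is left open.} Your architecture is the paper's. You reduce to \(k\) indices with \(D_{i_j}\cap X_1\subsetneq X_1\) and note that each \(i_j\) left the branch as a crossing index at an inside step \(q_j\). You then take \(q=\max_j q_j\) and apply \((k+1)\)-wise laminarity to \(D_{i_1},\ldots,D_{i_k},D_{a_q}\). However, the proof is not finished as written. You leave the exclusion of \(D_{a_q}\subseteq D_{i_j}\) for \(q_j<q\) open and call it the main obstacle. In its place you offer only an uncarried-out fallback: an induction along the branch with alternative pivots.

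This step is immediate from the reduction you already made. Since \(q\) is an inside step,
\(X_1=Y_s\subseteq Y_{q+1}=U_q=D_{a_q}\cap Y_q\subseteq D_{a_q}\). If \(D_{a_q}\subseteq D_{i_j}\), then \(X_1\subseteq D_{i_j}\), so \(D_{i_j}\cap X_1=X_1\). This contradicts your assumption that every selected trace is a proper subset of \(X_1\). The argument treats all \(j\) at once, whether \(q_j=q\) or \(q_j<q\), and it is exactly the paper's argument. The other direction, \(D_{i_j}\nsubseteq D_{a_q}\), is then handled as you do. A point of \(D_{i_j}\cap Y_{q_j}\) outside \(U_{q_j}\) cannot lie in \(D_{a_q}\), because \(D_{a_q}\cap Y_{q_j}\subseteq U_{q_j}\). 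So no fallback induction is needed.

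\textbf{Distinctness is justified incorrectly.} The pivot satisfies \(D_{a_q}\cap Y_q=U_q\), so \(a_q\in J^{q}_{\mathrm{eq}}\). The inside child is indexed by \(J^{q}_{\mathrm{in}}\), which requires strict inclusion. Hence \(a_q\) does \emph{not} survive into the inside child; it leaves the branch at step \(q\) itself. Distinctness still holds, because \(D_{a_q}\cap X_1=X_1\), whereas each \(D_{i_j}\cap X_1\) is a proper subset of \(X_1\); this is how the paper argues it. Your separate claim that \(a_q\) survives the earlier inside steps \(q_j<q\) is correct, since \(a_q\in J_q\). With these two one-line corrections, your proof coincides with the paper's.
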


\begin{proof}
The displayed identity follows directly from the definition of \(R_i\).
It remains to prove the laminarity assertion.

Let \(i_1,\ldots,i_k\in I_2\) be distinct, and suppose that their
nonempty \(R\)-sets have a common point. If one of them equals
\(X_1\), it contains all the others. We may therefore suppose that
every \(R_{i_j}\) is a proper subset of \(X_1\).

An index with \(D_i\cap X_1\neq\varnothing\) cannot leave the branch
at an outside step. Indeed, an index omitted from the outside child
has its restricted \(D_i\)-set contained in \(U_q\), while all later
ground sets, including \(X_1\), lie in \(Y_q\setminus U_q\). Thus
each \(i_j\) leaves at an inside step; denote this step by \(q_j\).

At step \(q_j\), the set \(D_{i_j}\cap Y_{q_j}\) meets
\(X_1\subseteq U_{q_j}\), but \(i_j\) does not enter the inside
child. It is therefore not in \(J_{\mathrm{out}}^{q_j}\). It is not in
\(J_{\mathrm{eq}}^{q_j}\) either, since equality would give
\(R_{i_j}=X_1\). Hence \(i_j\in J_{\times}^{q_j}\), so
\(D_{i_j}\cap Y_{q_j}\) crosses \(U_{q_j}\).

Let \(q=\max_jq_j\). Then \(X_1\subseteq U_q\subseteq D_{a_q}\).
Since \(R_{i_j}\subsetneq X_1\), one has
\(D_{a_q}\nsubseteq D_{i_j}\). On the other hand,
\(D_{i_j}\cap Y_{q_j}\) contains a point outside \(U_{q_j}\).
The pivot index \(a_q\) survives every earlier inside step, so
\(D_{a_q}\cap Y_{q_j}\subseteq U_{q_j}\), with equality when
\(q_j=q\). Thus \(D_{i_j}\nsubseteq D_{a_q}\).

Consequently, every \(D_{i_j}\) is incomparable with \(D_{a_q}\).
These sets have a common point in \(X_1\). Moreover, \(a_q\) is
distinct from every \(i_j\), since \(a_q=i_j\) would imply
\(R_{i_j}=X_1\). The \((k+1)\)-wise laminarity of \((D_i)\) therefore
gives comparable sets among \(D_{i_1},\ldots,D_{i_k}\). Intersecting
with \(X_1\) proves that two of the corresponding \(R\)-sets are
comparable.

The same argument applies directly to the nonempty sets
\(D_i\cap X_1\); the condition \(F_i\cap X_1\neq\varnothing\) is
used only to replace empty error fibers by \(\varnothing\).
\end{proof}

\begin{prop}\label{prop:heavy-compression}
Assume \(\mathsf L_k(\Lambda)\). Let \(v_0=(J_0,Y_0)\) have a heavy
child. Follow heavy children until the first pair \(v_s=(J_s,Y_s)\) that
is terminal, is balanced, or satisfies \(p(v_s)\leq3p(v_0)/4\).
Then \(W(v_0)\) is bounded by the weights of finitely many recursive systems and error systems such that:
\begin{enumerate}
\item the recursive systems have total parameter at most \(p(v_0)\), and each has parameter at most \(3p(v_0)/4\);
\item the total weight of the error systems is at most
\(C_{k,t}\Gamma p(v_0)\Lambda\).
\end{enumerate}
\end{prop}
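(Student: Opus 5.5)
We decompose the incidences of \(\mathcal F\) on \((J_0,Y_0)\) along the heavy branch \(v_0\to\cdots\to v_s\). Write \(Z_q=Y_q\setminus Y_{q+1}\), so that \(Y_0=Y_s\sqcup\bigsqcup_{q<s}Z_q\). Every incidence \((i,x)\) with \(i\in J_0\), \(x\in F_i\cap Y_0\) falls into exactly one of four classes.
\begin{enumerate}
\item \(i\in J_s\) and \(x\in Y_s\). This is \(W(v_s)\), and \(v_s\) is kept as a recursive system.
\item \(i\in J_s\) and \(x\in X_2=Y_0\setminus Y_s\). This is handled by Lemma~\ref{lem:surviving-indices}.
\item \(i\in J_0\setminus J_s\) and \(x\in X_1=Y_s\). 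This is handled by Lemma~\ref{lem:deleted-indices}.
\item \(i\) leaves the branch at some step \(q\), and \(x\in Z_{q'}\) for some \(q'\). Since a deleted index has empty restriction on later ground sets, only \(q'\le q\) matters. For \(q'<q\) we are in the situation of class 2 applied to the sub-branch \(v_0\to\cdots\to v_q\). For \(q'=q\), the incidence lies in the sibling child at step \(q\) (the non-heavy one), or in the \(J_{\mathrm{eq}}\cup J_\times\) part on \(U_q\).
\end{enumerate}

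To keep the count clean, I would not redo class 4 index by index. Instead, at each step \(q\) I use the one-step recurrence~\eqref{eq:positive-recurrence} only for the \emph{light} child. The light children at the steps \(q<s\), together with \(v_s\), become the recursive systems. By~\eqref{eq:child-mass}, the parameters of the children at step \(q\) sum to at most \(p(v_q)\). Iterating, the light children plus \(v_s\) have total parameter at most \(p(v_0)\).

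Each light child has parameter at most \(3p(v_q)/4\le 3p(v_0)/4\). The pair \(v_s\) also satisfies this bound unless it is balanced or terminal. If \(v_s\) is terminal it contributes nothing. If \(v_s\) is balanced, I instead replace it by its two children, each of which has parameter at most \(3p(v_s)/4\le 3p(v_0)/4\). The cost is one extra application of Lemma~\ref{lem:one-step-reduction} with \(\mathsf L_k(\Lambda)\), giving error \(A_{k,t}\Gamma p(v_s)\Lambda\). This settles item (1).

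The remaining difficulty, and the main obstacle, is that a naive approach applies Lemma~\ref{lem:one-step-reduction} at each of the \(s\) steps. That yields error \(\sum_q A\Gamma p(v_q)\Lambda\). Along a heavy branch \(p(v_q)>3p(v_0)/4\) throughout, and \(s\) may be large, so this sum is not \(O(p(v_0)\Lambda)\). This is exactly why the two lemmas are global along the branch. I would therefore organize the errors as follows; these are the error systems.
\begin{enumerate}
\item The incidences of surviving indices on \(X_2\). By Lemma~\ref{lem:surviving-indices} these form \((F'_i\cap P_i)_{i\in J_s}\), a \(k\)-wise laminar extension of a base system on \((J_s,X_2)\). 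By \(\mathsf L_k(\Lambda)\) its weight is at most \(A_{k,t}\Gamma\Delta_B(X_2,J_s)\Lambda\le A_{k,t}\Gamma p(v_0)\Lambda\).
\item The incidences on \(X_1\) of the indices \(I_2=J_0\setminus J_s\). By Lemma~\ref{lem:deleted-indices} these give a \(k\)-wise laminar system on \((I_2,X_1)\) with the same kind of bound.
\item The crossing contributions on the removed pieces \(Z_q\), namely indices in \(J_{\mathrm{eq}}^q\cup J_\times^q\) that are then dropped. These I would also fold into one global system. The \(Z_q\) are disjoint, so I take the ground set \(X_2\) and index set \(J_0\). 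An index \(i\) removed at step \(q\) is given the set \(D_i\cap Y_0\cap\bigcup_{q'\le q}Z_{q'}\), or more simply, the incidences are accounted for by the light child or by the \(E\)-sets at step \(q\) only.
\end{enumerate}

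The \(k\)-wise laminarity of the families in items 2 and 3 would be verified by repeating the pivot argument in the proofs of Lemmas~\ref{lem:surviving-indices} and~\ref{lem:deleted-indices}: a common point lies in a unique \(Z_q\), all chosen sets cross the pivot \(U_q\) there, and \((k+1)\)-wise laminarity together with the pivot yields two comparable sets.

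Each global system is an induced restriction of \(\mathcal F\) (after possibly emptying some fibres, which is allowed in subsystems), so \(t\)-fold intersections stay bounded by \(B\). Since a bounded number of such global systems occur, each bounded by \(A_{k,t}\Gamma p(v_0)\Lambda\), the total error is \(C_{k,t}\Gamma p(v_0)\Lambda\). This proves item (2).

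I expect the careful check to be that the incidences of item 3 are indeed covered either by the light child or by a single global laminar system, and are not double counted.
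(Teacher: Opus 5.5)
Your handling of \(v_s\), your light-children bookkeeping for item~(1), and your error systems~1 and~2 (the two off-diagonal quadrants) are all fine, but there is a genuine gap in the quadrant \((I_2,X_2)\), where \(I_2=J_0\setminus J_s\) and \(X_2=Y_0\setminus Y_s\).

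\textbf{Where the gap is.} The light children are disjoint sub-blocks of this quadrant and do not exhaust it. Your reason for discarding part of the remainder is that a deleted index has empty restriction on later ground sets. This is false. At an inside step \(q\), the indices of \(J_{\times}^q\cup J_{\mathrm{eq}}^q\) leave the branch while still meeting \(U_q=Y_{q+1}\). This is exactly what Lemma~\ref{lem:deleted-indices} uses; if your claim held, your error system~2 would be empty.

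\textbf{Which incidences are left uncovered.} The incidences of these indices on \(Z_{q'}\), \(q<q'<s\), lie in no light child and not in \(v_s\). They are also not in your systems~1 and~2, and not in your item-3 sets \(D_i\cap\bigcup_{q'\le q}Z_{q'}\). Add the incidences of equality indices at outside steps, and of crossing indices at outside steps that leave later. Together these are precisely the incidences of \((I_2,X_2)\) outside the light children, i.e.\ the per-step error incidences in that quadrant. You give no global \(k\)-wise laminar system covering them.

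\textbf{Why the pivot argument does not transfer.} In Lemma~\ref{lem:surviving-indices}, inclusions pass to the new sets because \(P_i=D_i\cap X_2\) is a trace on a common set. Sets truncated at each index's own departure step are not such traces. So an inclusion \(D_\alpha\subseteq D_\gamma\) need not give comparable new sets, and the selected indices need not all cross a single pivot. Your fallback, charging these incidences to the \(E\)-sets at each step, is exactly the sum \(\sum_q A_{k,t}\Gamma p(v_q)\Lambda\) that you correctly rejected.

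\textbf{The missing idea.} Do not split this quadrant at all. Keep the whole induced restriction \(\mathcal F\mathbin{|}(I_2,X_2)\) as a single recursive system, just as \(\mathcal F\mathbin{|}(I_1,X_1)\) is kept. Since \(I_1\sqcup I_2=J_0\) and \(X_1\sqcup X_2=Y_0\), the two diagonal parameters sum to \(p(v_0)\), so \(p(I_2,X_2)=p(v_0)-p(v_s)\). There are two cases.
\begin{enumerate}
\item If \(p(v_s)\le 3p(v_0)/4\), then \(v_s\) is the heavy child of a predecessor with parameter greater than \(3p(v_0)/4\). Hence \(p(v_s)>9p(v_0)/16\) and \(p(I_2,X_2)<7p(v_0)/16\).
\item Otherwise \(p(I_2,X_2)<p(v_0)/4\), and \(v_s\) is terminal or balanced, which you handle as you propose.
\end{enumerate}
The four quadrants then partition all incidences. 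The only error systems are the two off-diagonal quadrants from Lemmas~\ref{lem:surviving-indices} and~\ref{lem:deleted-indices}, whose parameters also sum to \(p(v_0)\), plus at most one balanced-step error at \(v_s\). This absorbs all the troublesome per-step incidences into the recursion, so no item~3 is needed.
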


\begin{proof}
Put \(p_0=p(v_0)\). The stopping time is finite because \(p\) is a
nonnegative integer and decreases strictly whenever the recursion
passes to a nonempty proper child.

Let \(I_1=J_s\), \(I_2=J_0\setminus J_s\),
\(X_1=Y_s\), and \(X_2=Y_0\setminus Y_s\). Define the diagonal
systems
\[
 \mathcal F_1=\mathcal F\mathbin{|}(I_1,X_1),
 \qquad
 \mathcal F_2=\mathcal F\mathbin{|}(I_2,X_2).
\]
Since \(I_1\sqcup I_2=J_0\) and \(X_1\sqcup X_2=Y_0\), their
parameters satisfy
\(p(I_1,X_1)+p(I_2,X_2)=p_0\), with
\(p(I_1,X_1)=p(v_s)\).

Let \(P_i\), \(i\in I_1\), and \(R_i\), \(i\in I_2\), be given by
Lemmas~\ref{lem:surviving-indices} and
\ref{lem:deleted-indices}. Define
\[
 \begin{aligned}
 \mathcal E_1
 &=(F'_i\cap P_i)_{i\in I_1}
   =\mathcal F\mathbin{|}(I_1,X_2),\\
 \mathcal E_2
 &=(F'_i\cap R_i)_{i\in I_2}
   =\mathcal F\mathbin{|}(I_2,X_1).
 \end{aligned}
\]
These equalities follow from the exact identity
\(P_i=D_i\cap X_2\) in Lemma~\ref{lem:surviving-indices} and the
identity in Lemma~\ref{lem:deleted-indices}. Thus
\(\mathcal E_1\) and \(\mathcal E_2\) are induced restrictions of
\(\mathcal F\), so their \(t\)-fold intersections are bounded by
\(B\). Their constraint families are \(k\)-wise laminar. Hence
\(\mathsf L_k(\Lambda)\) gives
\[
 \begin{aligned}
 w(\mathcal E_1)+w(\mathcal E_2)
 &\leq C_{k,t}\Gamma
 \bigl(p(I_1,X_2)+p(I_2,X_1)\bigr)\Lambda\\
 &=C_{k,t}\Gamma p_0\Lambda.
 \end{aligned}
\]
The final equality follows from the same partitions of the ground
set and index set.

The four systems \(\mathcal F_1,\mathcal F_2,\mathcal E_1\), and
\(\mathcal E_2\) partition all incidences of \(\mathcal F\) on
\((J_0,Y_0)\). It remains to determine which diagonal systems are
retained recursively.

Suppose first that \(p(v_s)\leq3p_0/4\). Since \(v_s\) is the first
pair satisfying a stopping condition, its predecessor has parameter
greater than \(3p_0/4\), and \(v_s\) is its heavy child. Therefore
\(p(v_s)>9p_0/16\). It follows that
\(p(I_2,X_2)=p_0-p(v_s)<7p_0/16\). Thus
\(\mathcal F_1\) and \(\mathcal F_2\) have total parameter \(p_0\),
and each has parameter at most \(3p_0/4\). Retain both as recursive
systems.

Suppose instead that \(p(v_s)>3p_0/4\). Then
\(p(I_2,X_2)<p_0/4\). Since \(v_s\) satisfies a stopping condition,
it is terminal or balanced.

If \(v_s\) is terminal, then \(\mathcal F_1\) has weight zero and may
be omitted. Retain only \(\mathcal F_2\).

If \(v_s\) is nonterminal, then it is balanced. Apply
\eqref{eq:positive-recurrence} at \(v_s\) and replace
\(\mathcal F_1\) by its two children. Their total parameter is at
most \(p(v_s)\), and each has parameter at most
\(3p(v_s)/4\leq3p_0/4\). Together with \(\mathcal F_2\), they have
total parameter at most \(p_0\). The additional one-step error has
weight at most \(C_{k,t}\Gamma p(v_s)\Lambda\).

In every case, the recursive systems have total parameter at most
\(p_0\), and each has parameter at most \(3p_0/4\). The two
off-diagonal systems and, when necessary, the balanced-step error
have total weight at most \(C_{k,t}\Gamma p_0\Lambda\). This proves
both assertions.
\end{proof}

\begin{thm}\label{thm:positive-induction}
If \(\mathsf L_k(\Lambda)\) holds, then
\[
 \mathsf L_{k+1}\bigl(\Lambda\ell_B\bigr)
\]
holds. In particular, every system with \(t\)-fold intersections bounded by
\(B\) that is obtained by adjoining a \((k+1)\)-wise laminar family to a
base system in \(\mathcal C\) has weight at most
\[
 C_{k,t}\Gamma N_B\Lambda\ell_B.
\]
\end{thm}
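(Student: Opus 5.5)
We prove \(\mathsf L_{k+1}(\Lambda\ell_B)\) by a recursion on the pair \((J,Y)\), driven by the potential \(p\). Fix \(J\subseteq I\), \(Y\subseteq X\), a base system \(\mathcal F'\in\mathcal C\) on \((J,Y)\), and a \((k+1)\)-wise laminar family \(\mathcal D\) on \(Y\) such that \((F'_i\cap D_i)\) has \(t\)-fold intersections bounded by \(B\). All the constructions of this section take place inside \((J,Y)\), so we may rename \((J,Y)\) as \((I,X)\). Since \(\Delta_B\) of a subpair is at most \(N_B\), the factor \(\ell_B\) that appears is at most the global one.

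The claim is that for every pair \(v\) arising in the recursion,
\[
 W(v)\leq C'\Gamma\Lambda\,p(v)\,\bigl(1+\log_{4/3}p(v)\bigr),
\]
with \(C'=C_{k,t}\) large enough. We prove this by strong induction on \(p(v)\), which is a positive integer.

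Terminal pairs have \(W=0\). For a nonterminal pair \(v\), there are two cases.

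\emph{Balanced pair.} Apply \eqref{eq:positive-recurrence}. The one-step error is at most \(A_{k,t}\Gamma p(v)\Lambda\). The two children have total parameter at most \(p(v)\), and each has parameter at most \(3p(v)/4\).

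\emph{Pair with a heavy child.} Apply Proposition~\ref{prop:heavy-compression}. It bounds \(W(v)\) by recursive systems of total parameter at most \(p(v)\), each with parameter at most \(3p(v)/4\), plus an error of at most \(C_{k,t}\Gamma p(v)\Lambda\).

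The recursive systems are induced restrictions \(\mathcal F\mathbin{|}(J',Y')\). They are again of the same form: a base system in \(\mathcal C\) (by closure under induced restrictions) intersected with a \((k+1)\)-wise laminar family (restriction preserves this property), with \(t\)-fold intersections bounded by \(B\). So the inductive hypothesis applies to them, whether they are diagonal pieces \(\mathcal F_1,\mathcal F_2\) or children. This gives
\[
 W(v)\leq \sum_u C'\Gamma\Lambda p_u\bigl(1+\log_{4/3}(3p(v)/4)\bigr)+C_{k,t}\Gamma p(v)\Lambda,
\]
where \(u\) ranges over the recursive systems. The first term is at most \(C'\Gamma\Lambda p(v)\log_{4/3}p(v)\). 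Taking \(C'\ge C_{k,t}\) absorbs the error term into the remaining \(+1\). This closes the induction.

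It follows that \(W(J,Y)\leq C\Gamma\Lambda\Delta_B(Y,J)(1+\log(1+N_B))\), which is \(\mathsf L_{k+1}(\Lambda\ell_B)\) with \(A_{k+1,t}=C'\). The "in particular" statement is the case \((J,Y)=(I,X)\), where \(p=N_B\).

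The main obstacle is bookkeeping rather than a new idea. First, the recursion must be applied to genuinely smaller instances, so that the induction on \(p\) is well founded; this is guaranteed because every recursive system has parameter at most \(3p/4<p\). Second, the errors produced at each level must sum to \(O(p)\) per level. This holds because the recursive parameters at each level sum to at most the parent's, and the depth is \(O(\log p)\). The single place where the hypothesis \(\mathsf L_k(\Lambda)\) enters is through the error terms, via Lemmas~\ref{lem:one-step-reduction}, \ref{lem:surviving-indices}, and \ref{lem:deleted-indices}, as already packaged in Proposition~\ref{prop:heavy-compression}.
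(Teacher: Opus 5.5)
Your proof is correct and follows the paper's argument: the same balanced/heavy dichotomy, resolved by \eqref{eq:positive-recurrence} and Proposition~\ref{prop:heavy-compression}, with the hypothesis \(\mathsf L_k(\Lambda)\) entering only through the error terms. The only difference is bookkeeping. You close the recursion by strong induction on \(p\) with the explicit potential \(p(1+\log_{4/3}p)\), whereas the paper sums the per-node errors level by level over a recursion tree of depth \(O(\ell_B)\); the two are equivalent.
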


\begin{proof}
Construct a recursion tree. At a balanced node
use~\eqref{eq:positive-recurrence}; at a node with a heavy child use
Proposition~\ref{prop:heavy-compression}. In either case, the recursive
children have total parameter at most that of the parent, each child has
parameter at most three quarters of that of the parent, and the total
error at the node is at most \(C_{k,t}\Gamma p(v)\Lambda\).
If \(S_d\) is the sum of the parameters at recursion depth \(d\), then
\(S_d\leq N_B\). Every positive parameter is an integer, and along a
branch it decreases by a factor of at least \(3/4\); hence the depth is
\(O(\ell_B)\). Summing the error estimates over all nodes proves the
theorem. Starting the construction at an arbitrary pair \((J,Y)\) gives
the bound required in the definition of \(\mathsf L_{k+1}\).
\end{proof}

\begin{cor}\label{cor:positive-fixed-k}
For every \(k\geq2\), \(\mathsf L_k(\ell_B^{\,k-1})\) holds.
Equivalently, adjoining one \(k\)-wise laminar family multiplies the
base estimate by at most \(C_{k,t}\ell_B^{k-1}\).
\end{cor}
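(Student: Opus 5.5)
The plan is induction on \(k\geq2\), with Proposition~\ref{prop:base-extension} as the base case and Theorem~\ref{thm:positive-induction} as the inductive step. Throughout, the constants \(A_{k,t}\) in \eqref{eq:Lk-property} may depend on \(k\) and \(t\). They do not depend on \(B\), \(\Gamma\), \(X\), \(I\), or the particular systems involved.

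For \(k=2\), a \(2\)-wise laminar family is exactly a laminar family. The more precise form of Proposition~\ref{prop:base-extension} applies to every induced restriction \((J,Y)\), every base system in \(\mathcal C\) and every laminar \(\mathcal L\) on \(Y\). It gives \(w\le C_t\Gamma\Delta_B(Y,J)\ell_B\). There is one small point to check. In that proposition the base system is a restriction of a system on \((I,X)\), whereas \(\mathsf L_2\) quantifies over arbitrary base systems \(\mathcal F'\in\mathcal C\) on \((J,Y)\). I would apply the proposition with ground set \(Y\) and index set \(J\) directly. Its proof only uses \eqref{eq:base-estimate} on sub-blocks \((I_u,C_u)\) with \(C_u\subseteq Y\) and \(I_u\subseteq J\), and the height there is \(O(1+\log(1+|Y|))=O(\ell_B)\) because \(|Y|\le N_B\). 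This yields \(\mathsf L_2(\ell_B)=\mathsf L_2(\ell_B^{\,1})\) with \(A_{2,t}=C_t\).

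For the inductive step, assume \(\mathsf L_k(\ell_B^{\,k-1})\). Theorem~\ref{thm:positive-induction} with \(\Lambda=\ell_B^{\,k-1}\) gives \(\mathsf L_{k+1}(\ell_B^{\,k-1}\ell_B)=\mathsf L_{k+1}(\ell_B^{\,k})\). The hypothesis \(\Lambda\ge1\) holds since \(\ell_B\ge1\). One point needs care: \(\ell_B\) is defined from the global \(N_B=\Delta_B(X,I)\), not from the local \(\Delta_B(Y,J)\). I would therefore verify that the recursion tree in the proof of Theorem~\ref{thm:positive-induction}, started at \((J,Y)\), has depth \(O(1+\log(1+\Delta_B(Y,J)))\le O(\ell_B)\). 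I would also verify that every invocation of \(\mathsf L_k\) inside Lemma~\ref{lem:one-step-reduction} and Proposition~\ref{prop:heavy-compression} is made on subpairs of \((I,X)\), so that the same global \(\ell_B\) appears as the factor \(\Lambda\). Both facts hold by monotonicity of \(\Delta_B\) under restriction. The constant \(A_{k+1,t}\) becomes \(C\cdot A_{k,t}\) times the per-level bookkeeping constant, which still depends only on \(k\) and \(t\).

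The equivalent reformulation is then immediate. Adjoining a \(k\)-wise laminar family to a base system with \(t\)-fold intersections bounded by \(B\) produces weight at most \(A_{k,t}\Gamma\Delta_B(Y,J)\ell_B^{\,k-1}\). This is the base estimate \(\Gamma\Delta_B(Y,J)\) multiplied by \(C_{k,t}\ell_B^{k-1}\).

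The only genuinely delicate step is this uniformity of the factor \(\ell_B\) across sub-pairs. Specifically, the induction hypothesis must be stated with the global \(\ell_B\) for all restrictions \((J,Y)\), which is how \(\mathsf L_k\) is phrased. Otherwise the iteration would mix local and global logarithms.
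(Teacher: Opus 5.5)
Your proposal is correct and follows the paper's proof exactly: the base case \(\mathsf L_2(\ell_B)\) comes from Proposition~\ref{prop:base-extension}, and the inductive step applies Theorem~\ref{thm:positive-induction} with \(\Lambda=\ell_B^{\,k-1}\). Your added checks match the definition of \(\mathsf L_k\) and the depth bound in that theorem's proof; these are that the global \(\ell_B\) is used uniformly across restrictions \((J,Y)\), and that the constants depend only on \(k\) and \(t\). They spell out what the paper's one-line proof leaves implicit.
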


\begin{proof}
The case \(k=2\) is Proposition~\ref{prop:base-extension}. Apply Theorem~\ref{thm:positive-induction} inductively.
\end{proof}

\section[Co-k-wise laminar families]
{Co-\texorpdfstring{\(k\)}{k}-wise laminar families}
\label{sect:negative-induction}

\begin{defn}
An indexed family \(\mathcal D=(D_i)_{i\in I}\) on \(X\) is \emph{co-\(k\)-wise laminar} if \((X\setminus D_i)_{i\in I}\) is \(k\)-wise laminar. Thus co-\(2\)-wise laminarity is co-laminarity.
\end{defn}

If \(Y\subseteq X\), then
\(Y\setminus(D_i\cap Y)=(X\setminus D_i)\cap Y\).
It follows that restriction to \(Y\) preserves co-\(k\)-wise laminarity
when \(Y\) is taken as the ground set.

Let \(\mathsf B_k(\Lambda)\) be the signed extension property obtained
from \(\mathsf L_k(\Lambda)\) by allowing the adjoined family to be either
\(k\)-wise laminar or co-\(k\)-wise laminar.
Proposition~\ref{prop:base-extension} gives \(\mathsf B_2(\ell_B)\).
The positive part of the induction is Theorem~\ref{thm:positive-induction}. We next prove the negative part.

\begin{prop}\label{prop:negative-induction}
If \(\mathsf B_k(\Lambda)\) holds, then adjoining a co-\((k+1)\)-wise laminar family satisfies the extension estimate
\[
 C_{k,t}\Gamma N_B\Lambda\ell_B.
\]
More precisely, if \(\mathcal F\) is the resulting system, then, for
every \(J\subseteq I\) and \(Y\subseteq X\),
\[
 w\bigl(\mathcal F\mathbin{|}(J,Y)\bigr)
 \leq C_{k,t}\Gamma\Delta_B(Y,J)\Lambda\ell_B.
\]
\end{prop}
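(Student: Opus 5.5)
The plan is to run the recursion of Section~\ref{sect:positive-induction} with the complements \(K_i=X\setminus D_i\) as the organizing family. By hypothesis \((K_i)_{i\in I}\) is \((k+1)\)-wise laminar, and \(F_i=F'_i\setminus K_i\). The extra input is the observation that every block on which a \(K_i\) is empty (so \(D_i\) is full) is just an induced restriction of \(\mathcal F'\in\mathcal C\). Such a block is therefore controlled by \eqref{eq:base-estimate} with constant \(\Gamma\le\Gamma\Lambda\).

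\emph{One-step recurrence.} At a pair \((J,Y)\), first delete the indices with \(K_i\supseteq Y\); these have \(F_i\cap Y=\varnothing\). Then choose a pivot \(a\) with \(U=K_a\cap Y\) maximal among the sets \(K_i\cap Y\), and define \(J_{\mathrm{in}},J_{\mathrm{out}},J_{\mathrm{eq}},J_\times\) exactly as before, with \(K\) in place of \(D\). The recursive children are again \((J_{\mathrm{in}},U)\) and \((J_{\mathrm{out}}\cup J_\times,Y\setminus U)\), and \eqref{eq:child-mass} holds. The remaining incidences fall into two kinds of blocks.
\begin{itemize}
\item For \(i\in J_{\mathrm{in}}\cup J_{\mathrm{eq}}\), the set \(K_i\) misses \(Y\setminus U\), and for \(i\in J_{\mathrm{out}}\), the set \(K_i\) misses \(U\). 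These are base blocks, of total weight at most \(\Gamma p(J,Y)\).
\item The block \((J_\times,U)\) has constraint \(U\setminus(K_i\cap U)\). Lemma~\ref{lem:one-step-reduction}, whose proof uses only the \((k+1)\)-wise laminarity of the family being split, shows that \((K_i\cap U)_{i\in J_\times}\) is \(k\)-wise laminar. So this block is co-\(k\)-wise laminar on \(U\), and \(\mathsf B_k(\Lambda)\) bounds it by \(A\Gamma p(J,Y)\Lambda\).
\end{itemize}
This yields the analogue of \eqref{eq:positive-recurrence}.

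\emph{Heavy branches.} The analogue of Proposition~\ref{prop:heavy-compression} again splits \(W(v_0)\) into the diagonal systems on \((I_1,X_1)\) and \((I_2,X_2)\) and two off-diagonal systems. For \((I_1,X_2)\) I would redo Lemma~\ref{lem:surviving-indices} for the complements. At an inside step a survivor has \(K_i\cap Z_q=\varnothing\). At an outside step \(K_i\cap Z_q\) is either empty or equal to \(K_i\cap U_q\), the latter when \(i\in J^q_\times\). Hence \(K_i\cap X_2=P_i\) is \(k\)-wise laminar, by the same crossing argument, and \(F_i\cap X_2=F'_i\cap(X_2\setminus P_i)\) is a co-\(k\)-wise laminar extension handled by \(\mathsf B_k(\Lambda)\).

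For \((I_2,X_1)\) I would split \(I_2\) into two groups.
\begin{itemize}
\item Indices with \(K_i\cap X_1=\varnothing\). This includes every index leaving at an outside step, since then \(K_i\cap Y_q\subseteq U_q\), and every index lying in \(J_{\mathrm{out}}\) at the inside step where it leaves. These give a base block, of weight at most \(\Gamma p(I_2,X_1)\).
\item Indices with \(K_i\cap X_1\neq\varnothing\). Each of these leaves at an inside step \(q_j\) as a member of \(J_\times^{q_j}\), because membership in \(J_{\mathrm{eq}}\) would force \(K_i\supseteq X_1\) and the index would have been deleted. The argument of Lemma~\ref{lem:deleted-indices}, using the pivot \(a_q\) with \(q=\max_j q_j\), shows that the nonempty sets \(K_i\cap X_1\) form a \(k\)-wise laminar family. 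So \(\mathsf B_k(\Lambda)\) applies again.
\end{itemize}
The retention of diagonal systems (stopped, balanced, or terminal) is word for word as in Proposition~\ref{prop:heavy-compression}.

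\emph{Assembling the bound.} The recursion tree argument of Theorem~\ref{thm:positive-induction} then gives depth \(O(\ell_B)\) and error \(C_{k,t}\Gamma p(v)\Lambda\) per node. Summing over the tree gives \(w(\mathcal F\mathbin{|}(J,Y))\le C_{k,t}\Gamma\Delta_B(Y,J)\Lambda\ell_B\), starting from any pair. All error blocks are induced restrictions of \(\mathcal F\), so their \(t\)-fold intersections remain bounded by \(B\).

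The main obstacle I expect is the deleted-index lemma in the complemented setting. The positive proof relied on nonempty \(F_i\cap X_1\) and on the equality case \(R_i=X_1\); here the roles of empty and full sets are exchanged. So one must check that the deletion rule (\(K_i\supseteq Y\)) and the extra group of indices that are full on \(X_1\) (\(K_i\cap X_1=\varnothing\)) together absorb every case that the comparability argument with \(D_{a_q}\) does not cover.
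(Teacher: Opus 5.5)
Your proposal is correct and follows essentially the same route as the paper. Both pass to the complements $K_i$, use the same four-class split with base blocks wherever the constraint is automatic, apply Lemma~\ref{lem:one-step-reduction} to obtain a co-$k$-wise laminar block on $U$, and run the heavy-branch compression with $P_i=K_i\cap X_2$ and $R_i=K_i\cap X_1$ controlled by $\mathsf B_k(\Lambda)$. Splitting off the indices with $K_i\cap X_1=\varnothing$ as a base block, instead of keeping empty and full $R_i$ inside one co-$k$-wise laminar system as the paper does, is an equivalent variant.

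The bookkeeping you leave implicit should be stated. Indices with $K_i\cap Y=\varnothing$ satisfy both the $J_{\mathrm{in}}$ and $J_{\mathrm{out}}$ conditions as literally copied, so they must be placed in exactly one class; the paper requires $K_i\cap Y\neq\varnothing$ for $J_{\mathrm{in}}$. A pair on which every $K_i\cap Y$ is empty must also be declared terminal, since otherwise the pivot is empty and the recursion does not progress. Its weight is bounded by $\Gamma p$ rather than by zero, including when it occurs as the stopped pair $v_s$.
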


\begin{proof}
Let \(\mathcal D=(D_i)_{i\in I}\) be co-\((k+1)\)-wise
laminar, let \(\mathcal F'=(F'_i)_{i\in I}\in\mathcal C\), and suppose
that \(F_i=F'_i\cap D_i\) defines a system
\(\mathcal F=(F_i)_{i\in I}\) whose \(t\)-fold intersections are
bounded by \(B\). Put \(K_i=X\setminus D_i\), so
\((K_i)_{i\in I}\) is \((k+1)\)-wise laminar. We prove the restricted
estimate directly. Fix \(J\subseteq I\) and \(Y\subseteq X\), and write
\(W(J,Y)=w(\mathcal F\mathbin{|}(J,Y))\) and
\(p(J,Y)=\Delta_B(Y,J)\).

Call \((J,Y)\) terminal if \(J=\varnothing\), \(Y=\varnothing\), or
\(K_i\cap Y=\varnothing\) for every \(i\in J\). In the first two
cases \(W(J,Y)=0\). In the third,
\(\mathcal F\mathbin{|}(J,Y)=\mathcal F'\mathbin{|}(J,Y)\);
this system belongs to \(\mathcal C\) and, being a restriction of
\(\mathcal F\), has \(t\)-fold intersections bounded by \(B\).
Hence \(W(J,Y)\leq\Gamma p(J,Y)\).

Suppose that \((J,Y)\) is nonterminal. Choose \(a\in J\) such that
\(U=K_a\cap Y\) is nonempty and maximal among the sets
\((K_i\cap Y)_{i\in J}\), and define
\[
\begin{aligned}
J_{\mathrm{in}}&=\{i\in J:\varnothing\neq K_i\cap Y\subsetneq U\},\\
J_{\mathrm{out}}&=\{i\in J:(K_i\cap Y)\cap U=\varnothing\},\\
J_{\mathrm{eq}}&=\{i\in J:K_i\cap Y=U\},\\
J_{\times}&=J\setminus
 (J_{\mathrm{in}}\cup J_{\mathrm{out}}\cup J_{\mathrm{eq}}).
\end{aligned}
\]
Thus \(J_{\mathrm{out}}\) includes the indices with empty complement
restriction. Apart from these empty restrictions, this is the same
four-class partition as in Section~\ref{sect:positive-induction}, with
\(D_i\) there replaced by \(K_i\). In particular, every member of
\(J_{\times}\) crosses \(U\). The structural children are
\((J_{\mathrm{in}},U)\) and
\((J_{\mathrm{out}}\cup J_{\times},Y\setminus U)\), and their total
parameter is at most \(p(J,Y)\).

The constraint is automatic on \(U\) for \(J_{\mathrm{out}}\), and on
\(Y\setminus U\) for \(J_{\mathrm{in}}\cup J_{\mathrm{eq}}\). Hence
the two systems
\[
 (F'_i\cap U)_{i\in J_{\mathrm{out}}},\qquad
 (F'_i\cap(Y\setminus U))_
   {i\in J_{\mathrm{in}}\cup J_{\mathrm{eq}}}
\]
belong to \(\mathcal C\), are subsystems of \(\mathcal F\), and have
total weight at most \(\Gamma p(J,Y)\). For
\(i\in J_{\mathrm{eq}}\cup J_{\times}\), put
\(E_i=K_i\cap U\). Lemma~\ref{lem:one-step-reduction}, applied to
\((K_i)\) on the indices with nonempty \(K_i\cap Y\), applies to exactly
the classes \(J_{\mathrm{eq}}\cup J_{\times}\). It shows that
\((E_i)\) is \(k\)-wise laminar on \(U\).
Therefore \((U\setminus E_i)\) is co-\(k\)-wise laminar, and
\[
 F'_i\cap(U\setminus E_i)=F_i\cap U.
\]
The corresponding system is a restriction of \(\mathcal F\), so
\(\mathsf B_k(\Lambda)\) bounds its weight by
\(C_{k,t}\Gamma p(J,Y)\Lambda\). Since \(\Lambda\geq1\), the preceding
systems give
\begin{equation}\label{eq:negative-recurrence}
\begin{aligned}
W(J,Y)\leq{}&
 W(J_{\mathrm{in}},U)
 +W(J_{\mathrm{out}}\cup J_{\times},Y\setminus U)\\
&+C_{k,t}\Gamma p(J,Y)\Lambda .
\end{aligned}
\end{equation}

Call a structural child heavy if its parameter is greater than
\(3p(J,Y)/4\), and call the pair balanced if neither child is heavy.
At most one child is heavy. Starting from a pair
\(v_0=(J_0,Y_0)\) with a heavy child, follow heavy children until the
first pair \(v_s=(J_s,Y_s)\) that is terminal, is balanced, or
satisfies \(p(v_s)\leq3p(v_0)/4\). This stopping time is finite because
the positive integer \(p\) decreases strictly along every nonempty
proper structural child.

Put \(I_1=J_s\), \(I_2=J_0\setminus J_s\),
\(X_1=Y_s\), and \(X_2=Y_0\setminus Y_s\). At the \(q\)-th step let
\(U_q\) be the pivot, let \(J_{\times}^q\) be the crossing class, and
let \(\mathcal O\) be the set of outside steps. For \(i\in I_1\), set
\[
 P_i=\bigcup_{\substack{q\in\mathcal O\\i\in J_{\times}^q}}
       (K_i\cap U_q).
\]
An index with empty \(K\)-restriction lies in the outside class; it
either persists only through outside steps or leaves at an inside step,
and consequently contributes an empty \(P_i\) or \(R_i\). We verify the
endpoint assertions directly, since such indices are retained in the
signed recursion. Put \(Z_q=Y_q\setminus Y_{q+1}\). The sets \(Z_q\)
are disjoint and have union \(X_2\). At an inside step, a surviving
index has \(K_i\cap Z_q=\varnothing\). At an outside step,
\(Z_q=U_q\); an index in \(J_{\mathrm{out}}^q\), including one with
empty restriction, misses \(U_q\), while an index in
\(J_{\times}^q\) contributes \(K_i\cap U_q\). Hence
\(P_i=K_i\cap X_2\).

If \(k\) nonempty \(P_i\) have a common point, that point lies in the
unique pivot region of one outside step, and all selected indices cross
that pivot. Lemma~\ref{lem:one-step-reduction}, with \(D_i\) replaced
by \(K_i\), gives a comparable selected pair. As in the last paragraph
of Lemma~\ref{lem:surviving-indices}, inclusion of the original fibers
implies inclusion of the corresponding \(P\)-sets. Thus the nonempty
\(P_i\) form a \(k\)-wise laminar family.

For \(i\in I_2\), put \(R_i=K_i\cap X_1\). An index with
\(R_i\neq\varnothing\) cannot leave at an outside step. If \(R_i=X_1\),
it is comparable with every other \(R\)-set. Otherwise it leaves at an
inside step as a crossing index. Taking the latest departure among \(k\)
such indices, the argument of Lemma~\ref{lem:deleted-indices} shows that
each selected \(K_i\) is incomparable with the latest pivot. The pivot is
distinct from the selected indices, and all these fibers have a common
point in \(X_1\). Their \((k+1)\)-wise laminarity therefore gives a
comparable selected pair. Hence the nonempty \(R_i\) form a
\(k\)-wise laminar family.

The two off-diagonal systems are therefore
\[
\begin{aligned}
\mathcal E_1
 &=\bigl(F'_i\cap(X_2\setminus P_i)\bigr)_{i\in I_1}
   =\mathcal F\mathbin{|}(I_1,X_2),\\
\mathcal E_2
 &=\bigl(F'_i\cap(X_1\setminus R_i)\bigr)_{i\in I_2}
   =\mathcal F\mathbin{|}(I_2,X_1).
\end{aligned}
\]
They are restrictions of \(\mathcal F\), their visible constraint
families are co-\(k\)-wise laminar, and
\[
 w(\mathcal E_1)+w(\mathcal E_2)
 \leq C_{k,t}\Gamma
 \bigl(p(I_1,X_2)+p(I_2,X_1)\bigr)\Lambda
 =C_{k,t}\Gamma p(J_0,Y_0)\Lambda.
\]
The four quadrants determined by
\(I_1\sqcup I_2=J_0\) and \(X_1\sqcup X_2=Y_0\) partition all
incidences. The diagonal systems are
\(\mathcal F\mathbin{|}(I_1,X_1)\) and
\(\mathcal F\mathbin{|}(I_2,X_2)\); their parameters sum to
\(p_0:=p(J_0,Y_0)\), and the first has parameter \(p(v_s)\).

The remaining compression is the numerical argument from the proof of
Proposition~\ref{prop:heavy-compression}. If
\(p(v_s)\leq3p_0/4\), minimality of \(s\) and heaviness give
\(p(v_s)>9p_0/16\), so the second diagonal has parameter less than
\(7p_0/16\); retain both diagonals. If \(p(v_s)>3p_0/4\), the second
diagonal has parameter less than \(p_0/4\). When \(v_s\) is terminal,
the first diagonal either has weight zero or belongs to \(\mathcal C\)
and has weight at most \(\Gamma p(v_s)\); include it among the error
terms. Otherwise \(v_s\) is balanced. Apply
\eqref{eq:negative-recurrence} at \(v_s\) and replace the first
diagonal by its two structural children. Their total parameter is at
most \(p(v_s)\), and each has parameter at most
\(3p(v_s)/4\leq3p_0/4\). Thus, exactly as in
Proposition~\ref{prop:heavy-compression}, a heavy node is replaced by
recursive systems of total parameter at most \(p_0\), each of parameter
at most \(3p_0/4\), together with error systems of total weight at most
\(C_{k,t}\Gamma p_0\Lambda\).

Finally, form the macro-recursion tree, using
\eqref{eq:negative-recurrence} at balanced nodes, the preceding
compression at heavy nodes, and the zero or base estimate at terminal
nodes. At every depth the total recursive parameter is at most
\(p(J,Y)\), while every nonterminal branch contracts by a factor of at
least \(3/4\). Its depth is therefore \(O(\ell_B)\), and summing the
error estimates gives
\[
 w\bigl(\mathcal F\mathbin{|}(J,Y)\bigr)
 \leq C_{k,t}\Gamma\Delta_B(Y,J)\Lambda\ell_B.
\]
Taking \(J=I\) and \(Y=X\) gives the global estimate.
\end{proof}

\begin{cor}\label{cor:signed-fixed-k}
For every \(k\geq2\),
\[
 \mathsf B_k\bigl(\ell_B^{\,k-1}\bigr)
\]
holds. Thus adjoining one \(k\)-wise laminar or co-\(k\)-wise laminar family multiplies the base estimate by at most \(C_{k,t}\ell_B^{k-1}\).
This assertion includes every induced restriction, with
\(\Delta_B(Y,J)\) in place of \(N_B\).
\end{cor}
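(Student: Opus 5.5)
We argue by induction on \(k\), establishing \(\mathsf B_k(\ell_B^{\,k-1})\) with constants depending only on \(k\) and \(t\).

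\emph{Base case.} For \(k=2\), we need \(\mathsf B_2(\ell_B)\). This is exactly Proposition~\ref{prop:base-extension}, applied in its restricted form for arbitrary \(J\subseteq I\) and \(Y\subseteq X\). That proposition covers both laminar and co-laminar adjoined families, and its constant \(C_t\) serves as \(A_{2,t}\).

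\emph{Inductive step.} Assume \(\mathsf B_k(\ell_B^{\,k-1})\), and let the adjoined family be \((k+1)\)-wise laminar or co-\((k+1)\)-wise laminar. The two signs are handled separately.

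\emph{Positive sign.} Note that \(\mathsf B_k(\Lambda)\) trivially implies \(\mathsf L_k(\Lambda)\): it is the same estimate for a larger class of adjoined families. Theorem~\ref{thm:positive-induction} with \(\Lambda=\ell_B^{\,k-1}\) then gives \(\mathsf L_{k+1}(\ell_B^{\,k})\).

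\emph{Negative sign.} Proposition~\ref{prop:negative-induction} applied to \(\mathsf B_k(\ell_B^{\,k-1})\) gives the restricted estimate \(C_{k,t}\Gamma\Delta_B(Y,J)\ell_B^{\,k}\) for every \(J\) and \(Y\).

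Taking \(A_{k+1,t}\) to be the maximum of the two constants yields \(\mathsf B_{k+1}(\ell_B^{\,k})\). Since \(\mathsf B_k\) is defined through the estimate~\eqref{eq:Lk-property} for all pairs \((J,Y)\), the induced-restriction clause of the corollary is built into the conclusion.

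The one point needing care is that \(\ell_B\) must be the same global quantity \(1+\log(1+N_B)\) at every stage. Otherwise, applying the inductive hypothesis to subsystems could introduce a differently scaled log factor. Both Theorem~\ref{thm:positive-induction} and Proposition~\ref{prop:negative-induction} are phrased with the fixed \(\ell_B\), and their recursion depths are \(O(\log(1+\Delta_B(Y,J)))\le O(\ell_B)\) since \(\Delta_B(Y,J)\le N_B\). So the factors multiply cleanly, with no hidden dependence on \(B\) or \(\Gamma\) in the constants.

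Finally, taking \(J=I\) and \(Y=X\) gives the global multiplicative loss \(C_{k,t}\ell_B^{\,k-1}\).
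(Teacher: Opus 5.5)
Your proposal is correct and follows the paper's proof essentially verbatim: Proposition~\ref{prop:base-extension} supplies \(\mathsf B_2(\ell_B)\). From \(\mathsf B_k(\ell_B^{k-1})\), the positive case comes from Theorem~\ref{thm:positive-induction} via the trivial implication \(\mathsf B_k\Rightarrow\mathsf L_k\), and the co-\((k+1)\)-wise laminar case comes from Proposition~\ref{prop:negative-induction}, each with factor \(\ell_B^{k}\). Your extra remark that \(\ell_B\) is the fixed global quantity \(1+\log(1+N_B)\), so the restricted estimates compose without rescaling, is a reasonable clarification that the paper leaves implicit.
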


\begin{proof}
The base case is Proposition~\ref{prop:base-extension}. If
\(\mathsf B_k(\ell_B^{k-1})\) holds,
Theorem~\ref{thm:positive-induction} gives the \((k+1)\)-wise laminar case
with factor \(\ell_B^k\), while Proposition~\ref{prop:negative-induction}
gives the co-\((k+1)\)-wise laminar case with the same factor.
\end{proof}

\section{A low-crossing sharpening}\label{sect:sharpening}

We continue with the notation and hypotheses of
Section~\ref{sect:setup}, including the class \(\mathcal C\), the constant
\(\Gamma\), and the base estimate~\eqref{eq:base-estimate}.

We first recall the set-system terminology used in this section. We
reserve \(X\) and \(\mathcal F\) for the indexed systems of
Section~\ref{sect:setup}; the auxiliary ground set below will instead be a
family of distinct fibers. Let \(U\) be a finite set and let
\(\mathcal R\subseteq2^U\) be a finite family. For \(S\subseteq U\), the
family \(\{R\cap S:R\in\mathcal R\}\) is the \emph{primal trace family}
on \(S\). The set \(S\) is \emph{shattered} if this family equals
\(2^S\), and the \emph{VC-dimension} of \((U,\mathcal R)\) is the
largest cardinality of a shattered set, with value \(0\) if no nonempty
set is shattered.

The dual notion restricts the ranges rather than the ground set. For a
positive integer \(q\), define
\[
 \pi_{\mathcal R}^*(q)
 =\max_{\substack{\mathcal Q\subseteq\mathcal R\\|\mathcal Q|\leq q}}
 \left|\left\{
 \{R\in\mathcal Q:u\in R\}:u\in U
 \right\}\right|.
\]
Thus \(\pi_{\mathcal R}^*(q)\) counts the point-membership patterns on a
subfamily of at most \(q\) ranges; it is not the cardinality of a primal
trace family. For \(q\leq|\mathcal R|\), this agrees with the usual
definition using exactly \(q\) ranges, since enlarging \(\mathcal Q\)
cannot identify two previously distinct patterns; for
\(q>|\mathcal R|\), it is constant. A range
\(R\in\mathcal R\) \emph{crosses} an edge of a path on \(U\) if it
contains exactly one endpoint. The \emph{crossing number} of the path is
the maximum, over \(R\in\mathcal R\), of the number of its edges crossed
by \(R\). Finally, write \(H_a=\sum_{j=1}^a1/j\) for the \(a\)-th
harmonic number.

An indexed family is \emph{simple} if distinct indices determine
distinct sets; we identify such a family with its ordinary collection of
members. The following estimate is proved directly by Tomon
\cite[Theorem~1.2]{TomonCrossFree}. It also follows from Knill's earlier
theorem on locally bounded width~\cite{KnillLocallyBoundedWidth}: for
\(k\geq3\), a \(k\)-wise laminar family is precisely a locally
\((k-1)\)-wide family, while the case \(k=2\) is the standard laminar
case.

\begin{thm}\label{thm:linear-size}
For every \(k\geq2\), there is a constant \(c_k\) such that every
simple \(k\)-wise laminar family \(\mathcal A\subseteq2^Y\) on a finite
ground set \(Y\) satisfies
\[
 |\mathcal A|\leq 1+c_k|Y|.
\]
\end{thm}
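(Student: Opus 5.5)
The plan is an induction on \(k\), with an inner induction on \(|Y|\), using the pivot decomposition of Section~\ref{sect:positive-induction}. The one change is that we count distinct sets rather than incidences. Since the statement is essentially \cite[Theorem~1.2]{TomonCrossFree}, equivalently Knill's theorem~\cite{KnillLocallyBoundedWidth}, the fallback is simply to cite it. The sketch below indicates how a self-contained argument could go; it is not complete.

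\emph{Base case \(k=2\).} Use the forest structure from the proof of Proposition~\ref{prop:base-extension}. A simple laminar family of nonempty subsets of \(Y\) forms a forest under strict inclusion. A node with exactly one child contains a point not covered by that child, and every leaf contains a point. Charging each node either to such a private point or to its branching, the family has at most \(2|Y|-1\) nonempty members. Adding the empty set gives the bound \(1+2|Y|\).

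\emph{Inductive step for a simple \((k+1)\)-wise laminar family \(\mathcal A\) on \(Y\).}
\begin{enumerate}
\item Choose a maximal member \(U\). Partition the remaining members into those strictly inside \(U\), those disjoint from \(U\), and those crossing \(U\).
\item Inside and disjoint members: these are simple \((k+1)\)-wise laminar families on \(U\) and on \(Y\setminus U\). The inner induction on \(|Y|\) handles them, since \(|U|+|Y\setminus U|=|Y|\), so their contributions add.
\item Crossing members: by the argument of Lemma~\ref{lem:one-step-reduction}, their traces on \(U\) form a \(k\)-wise laminar family. By the outer induction hypothesis this family has at most \(1+c_k|U|\) distinct traces. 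The additive constant \(1\) from each application must be absorbed, which needs some care when \(U\) is small.
\end{enumerate}

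\emph{Main obstacle: loss of simplicity.} Distinct crossing members may have the same trace on \(U\). Each such member is then determined only by the pair consisting of its trace on \(U\) and its trace on \(Y\setminus U\). Counting pairs directly would give a product bound, which is far from linear. The first thing I would try is to use \((k+1)\)-wise laminarity at points of \(U\). The members containing a fixed point have no antichain of size \(k+1\), so by Dilworth they split into at most \(k\) chains. Hence the crossing members sharing a given trace on \(U\) have traces on \(Y\setminus U\) that, after removing at most \(k\) chains, are nested. I would then try to charge every crossing member beyond the first \(k\) in each fiber to a point of \(Y\setminus U\) that it newly covers along its chain. The aim is that each point of \(Y\setminus U\) is charged at most \(O_k(1)\) times overall.

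I am unsure the charges do not accumulate across different pivots in the recursion. If they do, the fallback is to rely on the cited theorem of Tomon or Knill, which the paper uses only as an input to the low-crossing argument.
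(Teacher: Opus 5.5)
Your fallback is exactly what the paper does. The paper gives no argument of its own for Theorem~\ref{thm:linear-size}. It cites \cite[Theorem~1.2]{TomonCrossFree}, and also Knill's theorem \cite{KnillLocallyBoundedWidth} via the observation that for \(k\geq3\) a \(k\)-wise laminar family is precisely a locally \((k-1)\)-wide one. That observation is your remark that the members through a fixed point contain no antichain of size \(k\). With the citation your proposal is complete, and your base case \(k=2\) is correct.

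The self-contained sketch, however, cannot be completed as set up, and the obstruction is more basic than the loss of simplicity.

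\emph{The inner induction has no room for the crossing members.} Under the hypothesis \(|\mathcal A|\leq1+c|Y|\), the inside and disjoint subfamilies are bounded only by \((1+c|U|)+(1+c|Y\setminus U|)=2+c|Y|\). This already exceeds the target before \(U\) and the crossing members are counted. The sharper form ``at most \(c|Y|-1\) nonempty members'' is how the laminar case closes. But there, when \(\varnothing\neq U\neq Y\), the inside part, the disjoint part and \(U\) itself use up \((c|U|-1)+(c|Y\setminus U|-1)+1=c|Y|-1\) exactly, leaving nothing for the crossing members. Moreover, when \(U=Y\) the inside family lives on the same ground set, so induction on \(|Y|\) alone does not apply.

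\emph{The charges do accumulate, as you feared.} This happens even if no two crossing members share a trace on \(U\). All the recursion can yield is a bound \(\sum_v(1+c_k|U_v|)\) over the pivots \(v\). The points of \(U_v\) pass to the inside child and may lie in every later pivot. For nested pivots of sizes \(|Y|-1,|Y|-2,\ldots\) this sum is of order \(|Y|^2\), and even a perfectly balanced recursion only gives order \(|Y|\log|Y|\). This is the same mechanism by which the paper's own pivot recursion in Section~\ref{sect:positive-induction} loses a factor \(\ell_B\) at each step \(k\to k+1\).

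A linear bound needs a genuinely global charging, in which each point of \(Y\) receives only \(O_k(1)\) charges over the whole family. The per-pivot decomposition, even with your Dilworth refinement inside each trace class, does not provide such a charging. Supplying it is exactly the content of the theorems of Knill and Tomon.
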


Both Knill's theorem and Tomon's formulation concern ordinary set
families, in which repeated copies of the same set are not counted. Indexed repetitions are handled separately in
Proposition~\ref{prop:low-contiguity}.

For an ordinary family \(\mathcal A\subseteq2^Y\), its \emph{dual set
system} has ground set \(\mathcal A\) and range family
\(\mathcal R_{\mathcal A}=\{R_x:x\in Y\}\), where
\(R_x=\{A\in\mathcal A:x\in A\}\); equal ranges are identified. Thus
dualization interchanges the roles of points and sets in the incidence
relation. For the simple family in Theorem~\ref{thm:linear-size},
we write \(\mathcal R=\mathcal R_{\mathcal A}\).

\begin{lem}\label{lem:dual-complexity}
Let \(k\geq2\), and let \(\mathcal A\subseteq2^Y\) be a simple
\(k\)-wise laminar family. Then \((\mathcal A,\mathcal R)\) has
VC-dimension at most \(k-1\). Moreover, for every \(Q\subseteq Y\),
\[
 \bigl|\{A\cap Q:A\in\mathcal A\}\bigr|
 \leq 1+c_k|Q|.
\]
Consequently, \(\pi_{\mathcal R}^*(q)\leq C_kq\) for every \(q\geq1\),
where \(C_k\) depends only on \(k\).
\end{lem}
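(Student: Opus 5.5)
The lemma has three assertions, and I will prove them in order: the VC bound, the trace bound, and then the dual shatter bound. Each reduces to $k$-wise laminarity, to the fact that this property passes to traces, or to Theorem~\ref{thm:linear-size}.

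\emph{VC-dimension.} Suppose a set $S\subseteq\mathcal A$ with $|S|=k$ is shattered by $\mathcal R$. Write $S=\{A_1,\ldots,A_k\}$; these are distinct members, since the family is simple. Shattering gives a point $x$ with $R_x\cap S=S$, so $x\in A_1\cap\cdots\cap A_k$. By $k$-wise laminarity two of them are comparable, say $A_i\subseteq A_j$ with $i\neq j$. Shattering also gives a point $y$ with $R_y\cap S=\{A_i\}$, so $y\in A_i\setminus A_j$, which is a contradiction. Hence no $k$-element set is shattered, and the VC-dimension is at most $k-1$. The case $k=2$ is included.

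\emph{Traces.} Fix $Q\subseteq Y$. The family $(A\cap Q)_{A\in\mathcal A}$ is $k$-wise laminar on $Q$, because restriction preserves the property: if $k$ distinct indices have traces with a common point, the original sets have a common point, so two of them are comparable, and intersecting with $Q$ keeps the inclusion. The collection of distinct traces $\{A\cap Q\}$ need not itself be $k$-wise laminar as an indexed family, since comparability was witnessed by indices. This is the step to check carefully. Take $k$ distinct traces with a common point and choose one representative $A$ for each. The representatives are distinct indices, so two representatives are comparable, and therefore the corresponding traces are comparable. Thus the set of distinct traces is a simple $k$-wise laminar family on $Q$, and Theorem~\ref{thm:linear-size} bounds its size by $1+c_k|Q|$.

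\emph{Dual shatter function.} Let $\mathcal Q\subseteq\mathcal R$ with $|\mathcal Q|\leq q$, and write $\mathcal Q=\{R_{x_1},\ldots,R_{x_m}\}$ with $m\leq q$. The ground set of the dual system is $\mathcal A$, so a point-membership pattern of $A\in\mathcal A$ on $\mathcal Q$ is $\{R_{x_j}:A\in R_{x_j}\}=\{R_{x_j}:x_j\in A\}$. This pattern is determined by, and determines, $A\cap Q$ for $Q=\{x_1,\ldots,x_m\}$. The number of patterns is therefore at most $1+c_k m\leq(1+c_k)q$ for $q\geq1$. Take $C_k=1+c_k$.

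The only delicate point is the passage from the indexed trace family to the simple family of distinct traces, which is handled by choosing representatives as above. Everything else is immediate from the definitions.
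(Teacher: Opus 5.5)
Your proposal is correct and follows the paper's proof essentially step for step. The three parts match: the VC bound uses a common point together with singleton patterns (you apply $k$-wise laminarity before using the singleton pattern rather than after, which is immaterial). The trace bound chooses one representative per distinct trace and then applies Theorem~\ref{thm:linear-size}. The dual shatter bound uses the bijection between membership patterns on $\mathcal Q$ and traces on a set of representative points.

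One small correction: the family of distinct traces always is $k$-wise laminar, since choosing representatives makes it a subfamily of the indexed trace family, exactly as your own check shows. So your sentence saying it ``need not'' be is misleading, though it does not affect the argument.
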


\begin{proof}
Suppose that \(A_1,\ldots,A_k\in\mathcal A\) are shattered by
\(\mathcal R\). There is a point \(x_0\in Y\) such that
\(A_1,\ldots,A_k\in R_{x_0}\), so
\(x_0\in A_1\cap\cdots\cap A_k\). For each \(i\), there is also a
point \(x_i\in Y\) for which
\(R_{x_i}\cap\{A_1,\ldots,A_k\}=\{A_i\}\). Hence
\(x_i\in A_i\setminus A_j\) for every \(j\neq i\). In particular,
\(x_i\in A_i\setminus A_j\) and \(x_j\in A_j\setminus A_i\), so the
\(A_j\) are pairwise incomparable. This contradicts \(k\)-wise
laminarity and proves the VC-dimension bound.

After repeated traces are identified, the trace family
\(\{A\cap Q:A\in\mathcal A\}\) is \(k\)-wise laminar on \(Q\). Indeed,
if \(k\) distinct traces have a common point, choose one original member
of \(\mathcal A\) for each trace. These original members have a common
point, so two are comparable; their traces are then comparable as well.
Theorem~\ref{thm:linear-size} gives the displayed estimate.

Now let \(\mathcal Q\subseteq\mathcal R\) have cardinality at most
\(q\), and choose one point of \(Y\) representing each range in
\(\mathcal Q\). The membership patterns of the members of
\(\mathcal A\) on \(\mathcal Q\) are in bijection with their distinct
traces on this set of representative points. Thus their number is at most
\(1+c_kq\leq C_kq\), after increasing \(C_k\) if necessary.
\end{proof}

The following low-crossing path theorem is
\cite[Theorem~18]{BonnetDuronSylvesterZamaraev}, in a formulation based
on Welzl's method~\cite{Welzl1988}. We state the full result in the
notation above; the logarithm is to base \(2\).

\begin{thm}[Bonnet--Duron--Sylvester--Zamaraev]
\label{thm:bdsz-low-crossing}
Let \((U,\mathcal R)\) be a finite set system with
\(|U|=n\geq1\) and \(\mathcal R\neq\varnothing\), let
\(f\colon\mathbb R_{\geq0}\to\mathbb R_{\geq0}\) be strictly
increasing, and let \(d\geq1\) be an integer. Suppose that
\((U,\mathcal R)\) has VC-dimension at most \(d\) and
\(\pi_{\mathcal R}^*(m)\leq f(m)\) for every integer
\(1\leq m\leq n\). Then there is a spanning path on \(U\) with crossing
number at most
\[
 2\log_2|\mathcal R|
 +10d\sum_{j=1}^n\frac{1}{f^{-1}(j/2)}.
\]
\end{thm}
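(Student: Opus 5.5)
The plan is to follow Welzl's iterative reweighting method. First construct a spanning tree (or matching sequence) on \(U\) with low crossing number. Then convert it to a spanning path by a traversal. This conversion at most doubles the crossing number, which accounts for the factor \(2\) in front of \(\log_2|\mathcal R|\).

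Concretely, I will run \(n-1\) rounds. Each range \(R\in\mathcal R\) carries a weight, initially \(w_0(R)=1\), so the initial total weight is \(|\mathcal R|\). I will maintain a set \(U_j\) of \(j\) ``active'' points, initially \(U_n=U\). At round \(j\), I choose two active points \(u\neq u'\) for which the total weight \(\omega_j\) of ranges crossing \(\{u,u'\}\) is small. The pair \(uu'\) becomes an edge, \(u\) is deactivated, and the weight of every crossing range is doubled. The key per-round claim is
\[
 \omega_j\leq \frac{C d}{f^{-1}(j/2)}\,W_j,
\]
where \(W_j\) is the current total weight.

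To prove the claim, draw a weighted random multiset \(\mathcal Q\) of \(m\approx f^{-1}(j/2)\) ranges. By the hypothesis \(\pi^*_{\mathcal R}(m)\leq f(m)\leq j/2\), the ranges in \(\mathcal Q\) induce at most \(j/2\) membership patterns on the \(j\) active points. Hence at least \(j/2\) of the active points share a pattern with some other active point, so these pairs are crossed by no range of \(\mathcal Q\). The VC-dimension bound \(d\) then enters through an \(\varepsilon\)-approximation or Haussler-packing estimate for the family of symmetric differences. That estimate shows that, with positive probability, some unseparated pair has crossing weight at most \(O(d)W_j/m\).

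Given the per-round claim, the bookkeeping is routine. We have
\[
 W_{j-1}\leq W_j\bigl(1+\omega_j/W_j\bigr),
\qquad\text{so}\qquad
 \log_2 W_{\mathrm{final}}\leq \log_2|\mathcal R|+\sum_j\frac{O(d)}{f^{-1}(j/2)}.
\]
A range crossing \(\kappa\) tree edges has final weight \(2^\kappa\leq W_{\mathrm{final}}\). This bounds the tree crossing number by \(\log_2|\mathcal R|+O(d)\sum_j 1/f^{-1}(j/2)\). Doubling for the path conversion gives the stated form, with \(10d\) absorbing the constants once they are tracked.

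The main obstacle is the per-round claim with a linear dependence on \(d\) and no extra logarithm. A naive \(\varepsilon\)-net argument gives \(d\log m/m\). Removing the logarithm needs the sharper packing/sampling estimate, using the dual shatter bound for the cell count and the primal VC bound for the deviation. I would first try Haussler's packing lemma applied to the points in the sampled cells. If that fails, I would fall back on the chaining argument of Chazelle--Welzl. Since this theorem is quoted from \cite[Theorem~18]{BonnetDuronSylvesterZamaraev}, in the paper itself I would simply cite it rather than reprove it.
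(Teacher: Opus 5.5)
The paper gives no proof here: it quotes Theorem~18 of Bonnet--Duron--Sylvester--Zamaraev, so your decision to cite it is exactly what the paper does. Your outline (weight doubling, a spanning tree converted to a path at the cost of a factor~$2$, a packing estimate in each round) is Welzl's scheme as sharpened by Chazelle and Welzl. Your sketch, however, breaks exactly where you say $d$ enters. In round $j$ you are packing the active points $u$, viewed as the sets $R_u=\{R\in\mathcal R:u\in R\}$ inside the weighted multiset of ranges. The standard proof of Haussler's packing lemma gives the following: a $\delta$-separated subfamily of a family of VC-dimension $d_0$ on an $N$-element ground multiset has at most $2\pi(4d_0N/\delta)$ members, where $\pi$ is the shatter function of that family. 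Here $\pi=\pi^*_{\mathcal R}\le f$, so you get $\omega_j\le 4d_0W_j/f^{-1}(j/2)$ directly, with no need to analyse the unseparated pairs of one sample. But $d_0$ is the VC-dimension of $\{R_u:u\in U\}$ as a family on $\mathcal R$, that is, the \emph{dual} VC-dimension of $(U,\mathcal R)$. The hypothesis instead bounds the subsets of $U$ shattered by $\mathcal R$, which controls $d_0$ only through $d_0<2^{d+1}$. The family of symmetric differences you mention does not help, since its VC-dimension is at least $d_0$.

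With only the stated hypothesis, your per-round claim is false, and so is the statement as quoted. Let $U\subseteq\{0,1\}^m$ be a code with $2^{m/2}$ words and minimum distance at least $m/10$ (Gilbert--Varshamov), and let $\mathcal R$ consist of the $m$ coordinate ranges. Since there are at most $m$ ranges, $(U,\mathcal R)$ has VC-dimension at most $\log_2 m$. Moreover $\pi^*_{\mathcal R}(q)\le 2^q$, so $f(q)=2^q$ for $q\ge1$, extended linearly on $[0,1]$, is admissible, and the claimed bound is $O(|U|\log m/m)$. On the other hand, every path edge joins two words differing in at least $m/10$ coordinates, so by averaging over coordinates some range crosses at least $(|U|-1)/10$ edges. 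Your outline does go through once $d$ is taken to bound $d_0$. For the paper's application this is harmless: in Lemma~\ref{lem:dual-complexity} the relevant $d_0$ is the VC-dimension of the fiber family on $Y$, which is at most $k$. Indeed, shattered points $x_0,\ldots,x_k$ would give $k$ pairwise incomparable members with traces $\{x_0,x_i\}$, all containing $x_0$. Finally, your claim that $10d$ absorbs the constants is not automatic. With the packing bound above, the inequality $\log_2(1+x)\le x/\ln 2$ and the path doubling give $8/\ln 2\approx 11.5$ in place of $10$.
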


We use the theorem only with \(f(q)=Cq\). More precisely, suppose that
\(|U|=a\geq1\), \(|\mathcal R|=s\geq1\), the VC-dimension is at most
\(d\), and \(\pi_{\mathcal R}^*(q)\leq Cq\) for every \(q\geq1\),
where \(C\geq1\). Since \(f^{-1}(j/2)=j/(2C)\),
Theorem~\ref{thm:bdsz-low-crossing} gives a spanning path with crossing
number at most
\begin{equation}\label{eq:linear-dual-crossing}
 2\log_2s+20dCH_a.
\end{equation}
List the vertices in their order along this path. For a fixed range, its
indicator changes value exactly at the path edges crossed by the range.
If its set of ones consists of \(r\) maximal intervals, there are at
least \(2r-2\) such changes, since at most two of these intervals meet
the endpoints of the order. A crossing number of \(c\) therefore gives
at most \(1+c/2\) intervals. In particular, every member of
\(\mathcal R\) is a union of at most
\(1+\log_2s+10dCH_a\) intervals in this order.

\begin{prop}\label{prop:low-contiguity}
Let \(k\geq2\), and let \(\mathcal D=(D_i)_{i\in J}\) be \(k\)-wise
laminar on a finite set \(Y\). There is a linear order on \(J\) such
that, for every \(x\in Y\), each of
\[
 \{i\in J:x\in D_i\},\qquad \{i\in J:x\notin D_i\}
\]
is a union of at most
\(C_k(1+\log(1+|Y|+|J|))\) intervals.
\end{prop}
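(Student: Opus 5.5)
Our plan is to apply the low-crossing path theorem to the dual set system of the distinct fibers, and then handle repeated indices by keeping equal fibers together in the order. Let \(\mathcal A=\{D_i:i\in J\}\) be the ordinary family of distinct members, so \(\mathcal A\) is a simple \(k\)-wise laminar family on \(Y\). Indeed, if \(k\) distinct members have a common point, choosing one index for each gives \(k\) distinct indices, and \(k\)-wise laminarity of \(\mathcal D\) yields a comparable pair. Put \(a=|\mathcal A|\leq|J|\). If \(a=0\), which happens only when \(J=\varnothing\), there is nothing to prove. By Theorem~\ref{thm:linear-size}, \(a\leq1+c_k|Y|\).

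We form the dual system \((\mathcal A,\mathcal R)\) with ranges \(R_x=\{A\in\mathcal A:x\in A\}\). Here \(s=|\mathcal R|\leq|Y|\). If \(Y=\varnothing\), every \(D_i\) is empty and the statement is trivial, so we assume \(s\geq1\). Lemma~\ref{lem:dual-complexity} gives VC-dimension at most \(d=\max\{k-1,1\}\) and \(\pi^*_{\mathcal R}(q)\leq C_kq\). Theorem~\ref{thm:bdsz-low-crossing}, in the form~\eqref{eq:linear-dual-crossing}, then gives a spanning path on \(\mathcal A\) whose crossing number is at most \(2\log_2 s+20dC_kH_a\). Since \(H_a\leq1+\log a\leq 1+\log(1+|J|)\) and \(\log_2 s\leq C\log(1+|Y|)\), this is \(O_k(1+\log(1+|Y|+|J|))\). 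By the interval-counting remark following~\eqref{eq:linear-dual-crossing}, for each \(x\), the set \(R_x\) is a union of at most \(1+c/2\) intervals in the path order on \(\mathcal A\). The complement \(\mathcal A\setminus R_x\) changes membership at exactly the same path edges, so the same count applies to it.

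Finally we lift to \(J\). We order \(J\) by listing the classes \(\{i:D_i=A\}\) consecutively, in the path order of \(A\in\mathcal A\), and order each class arbitrarily. For each \(x\), the set \(\{i:x\in D_i\}\) is the union of the classes of members of \(R_x\). A maximal interval of \(R_x\) in the path order therefore becomes a single interval of \(J\), and the same holds for the complement. So both sets are unions of at most \(1+c/2=O_k(1+\log(1+|Y|+|J|))\) intervals. The constant is absorbed into \(C_k\), using \(1+\log(1+|Y|+|J|)\geq1\).

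The main points to check are the following.
\begin{enumerate}
\item The hypotheses of Theorem~\ref{thm:bdsz-low-crossing} require \(d\geq1\), \(n\geq1\) and \(\mathcal R\neq\varnothing\); these are handled by the degenerate cases above. Ranges \(R_x\) that are empty or equal to all of \(\mathcal A\) are harmless.
\item Converting \(H_a\) and \(\log_2 s\) into the single logarithm \(\log(1+|Y|+|J|)\) is routine.
\end{enumerate}
No step seems to be a real obstacle once Theorem~\ref{thm:linear-size} and Lemma~\ref{lem:dual-complexity} are available. The only delicate point is the bookkeeping for repeated indices, and keeping equal fibers contiguous resolves it.
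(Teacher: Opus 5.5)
Your proposal is correct and follows the paper's proof essentially step for step. You pass to the simple family of distinct fibers, apply the linear specialization \eqref{eq:linear-dual-crossing} of Theorem~\ref{thm:bdsz-low-crossing} to the dual system via Lemma~\ref{lem:dual-complexity}, and then replace each fiber by a consecutive block of indices. The differences are cosmetic: you bound the complements by noting that they cross the same path edges, whereas the paper uses that the complement of \(q\) intervals is at most \(q+1\) intervals; and your appeal to Theorem~\ref{thm:linear-size} for \(a\leq1+c_k|Y|\) is not needed.
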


\begin{proof}
Let \(\mathcal A=\{D_i:i\in J\}\) be the family of distinct fibers. If
\(J=\varnothing\), \(Y=\varnothing\), or \(|\mathcal A|\leq1\), the
assertion is immediate. We may therefore assume that \(Y\neq\varnothing\)
and \(|\mathcal A|\geq2\). Passing to the distinct fibers preserves
\(k\)-wise laminarity.

For \(x\in Y\), put \(R_x=\{A\in\mathcal A:x\in A\}\), and let
\(\mathcal R=\{R_x:x\in Y\}\), with equal ranges identified. Write
\(a=|\mathcal A|\) and \(s=|\mathcal R|\), so \(a\leq|J|\) and
\(s\leq|Y|\). By Lemma~\ref{lem:dual-complexity},
\((\mathcal A,\mathcal R)\) has VC-dimension at most \(k-1\) and
\(\pi_{\mathcal R}^{*}(q)\leq C_kq\) for every \(q\geq1\). Under the
at-most-\(q\) convention, this estimate also holds when \(q>s\), since
the dual shatter function is then constant.

Apply the linear specialization~\eqref{eq:linear-dual-crossing} of
Theorem~\ref{thm:bdsz-low-crossing} with \(d=k-1\). It gives a linear
order on \(\mathcal A\) in which every \(R_x\) is a union of at most
\(1+\log_2s+10(k-1)C_kH_a\) intervals. Since \(s\leq|Y|\),
\(a\leq|J|\), and \(H_a\leq1+\log a\), this number is at most
\(C'_k(1+\log(1+|Y|+|J|))\), where \(C'_k\) depends only on \(k\).

Replace each \(A\in\mathcal A\), in this order, by the consecutive block
\(J_A=\{i\in J:D_i=A\}\), using an arbitrary order within each block.
For every \(x\in Y\), the set \(\{i\in J:x\in D_i\}\) is the union of
the blocks \(J_A\) for which \(A\in R_x\). Replacing a fiber by a
consecutive block creates no new transitions, so this set satisfies the
required interval bound. Its complement in \(J\) is
\(\{i\in J:x\notin D_i\}\). Since the complement of a union of \(q\)
intervals in a finite linear order is a union of at most \(q+1\)
intervals, the same estimate holds for this second set after increasing
the constant if necessary.
\end{proof}

\begin{thm}\label{thm:direct-signed-extension}
Let \(k\geq2\), let \(\mathcal F'=(F'_i)_{i\in I}\in\mathcal C\), let
\(\mathcal D=(D_i)_{i\in I}\) be \(k\)-wise laminar on \(X\), and
choose one sign. Put \(L_i=D_i\) in the positive case and
\(L_i=X\setminus D_i\) in the negative case. Suppose that
\(F_i=F'_i\cap L_i\) defines a system \(\mathcal F=(F_i)_{i\in I}\)
whose \(t\)-fold intersections are bounded by \(B\). Then
\[
 w(\mathcal F)\leq C_{k,t}\Gamma N_B\ell_B^2.
\]
For every \(J\subseteq I\) and \(Y\subseteq X\), one also has
\[
 w\bigl(\mathcal F\mathbin{|}(J,Y)\bigr)
 \leq C_{k,t}\Gamma\Delta_B(Y,J)\ell_B^2.
\]
\end{thm}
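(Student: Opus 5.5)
Fix \(J\subseteq I\) and \(Y\subseteq X\). It suffices to prove the restricted estimate, since the global one is the case \(J=I\), \(Y=X\). The plan is to reduce the \(k\)-wise laminar constraint to an interval constraint on the index set, and then run the balanced interval-tree argument of Proposition~\ref{prop:base-extension} with the roles of points and indices interchanged.

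First, apply Proposition~\ref{prop:low-contiguity} to \((D_i\cap Y)_{i\in J}\), which is \(k\)-wise laminar on \(Y\). This gives a linear order on \(J\) such that, for every \(x\in Y\), the set \(S_x=\{i\in J:x\in L_i\}\) is a union of at most \(m=C_k(1+\log(1+|Y|+|J|))=O_k(\ell_B)\) intervals of \(J\). This holds in both sign cases, because the proposition controls both the membership set and the non-membership set. Note also that \(|Y|+|J|\leq\Delta_B(Y,J)\leq N_B\).

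Next, build the balanced binary interval tree on \(J\) in this order. Its height is \(h=O(\log(1+|J|))=O(\ell_B)\). Each interval of \(J\) decomposes canonically into at most \(2h\) tree nodes, so each \(S_x\) is a disjoint union of at most \(2hm=O_k(\ell_B^2)\) node intervals \(J_u\). For a tree node \(u\), put \(Y_u=\{x\in Y:u\text{ occurs in the decomposition of }S_x\}\). Every incidence \((i,x)\) of \(\mathcal F\mathbin{|}(J,Y)\) has \(i\in S_x\), so it lies in exactly one block \(J_u\times Y_u\). On that block, \(L_i\supseteq Y_u\) for all \(i\in J_u\). Hence
\[
 \mathcal F\mathbin{|}(J_u,Y_u)=\mathcal F'\mathbin{|}(J_u,Y_u).
\]
The right-hand side lies in \(\mathcal C\), because \(\mathcal C\) is closed under induced restrictions. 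The left-hand side is an induced restriction of \(\mathcal F\), so its \(t\)-fold intersections are bounded by \(B\). Therefore \eqref{eq:base-estimate} gives
\[
 w\bigl(\mathcal F\mathbin{|}(J_u,Y_u)\bigr)\leq\Gamma\Delta_B(Y_u,J_u).
\]

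Finally, sum over the nodes. At each depth the intervals \(J_u\) are disjoint, so \(\sum_u|J_u|\leq h|J|\). Each \(x\) lies in at most \(2hm\) of the sets \(Y_u\), so \(\sum_u|Y_u|\leq 2hm|Y|\). Together these give
\[
 w\bigl(\mathcal F\mathbin{|}(J,Y)\bigr)\leq\Gamma\bigl(2hm|Y|+(B+1)h|J|\bigr)\leq C_{k,t}\Gamma\Delta_B(Y,J)\ell_B^2 .
\]
The only delicate point is the bookkeeping for the logarithmic factors in the restricted form: \(m\) and \(h\) are computed from \(|Y|\) and \(|J|\), and both are bounded by \(O(\ell_B)\) because \(\Delta_B(Y,J)\leq N_B\). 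Repeated fibers are already handled inside Proposition~\ref{prop:low-contiguity}, so no further obstacle is expected. The real input is the low-crossing ordering; the rest is the same decomposition as in Section~\ref{sect:setup}, now performed on the index side.
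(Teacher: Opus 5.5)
Your proposal is correct and follows essentially the same route as the paper's proof. Both arguments take the low-crossing order on \(J\) from Proposition~\ref{prop:low-contiguity} and build the balanced interval tree on the index side, using blocks \(J_u\times Y_u\) on which \(\mathcal F\) agrees with \(\mathcal F'\); they then apply the base estimate on each block and sum using \(\sum_u|Y_u|\leq 2hm|Y|\) and \(\sum_u|J_u|\leq h|J|\) to obtain the factor \(\ell_B^2\).
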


\begin{proof}
Fix \(J\subseteq I\) and \(Y\subseteq X\). The assertion is immediate
if \(J\) or \(Y\) is empty. The restricted family
\((D_i\cap Y)_{i\in J}\) remains \(k\)-wise laminar. Its positive
signed members are \(D_i\cap Y\), while its negative signed members are
\(Y\setminus(D_i\cap Y)\). Apply
Proposition~\ref{prop:low-contiguity}, and let \(A_x\subseteq J\) be the
set of indices for which the chosen signed condition holds at \(x\in Y\).
Each \(A_x\) is the union of at most
\(q=\lceil C_k\ell_B\rceil\) intervals in the resulting order on \(J\).

Use the balanced binary interval-tree construction from the proof of
Proposition~\ref{prop:base-extension}, now on the ordered index set
\(J\), and put \(h=1+\lceil\log_2|J|\rceil\). Decompose every maximal
interval of \(A_x\) into the maximal tree nodes that it contains. For
each fixed \(x\), the resulting node intervals are disjoint, cover
\(A_x\), and number at most \(2qh\). For a tree node \(u\), let \(J_u\)
be its index interval and let \(Y_u\) consist of the points whose
decomposition contains \(u\); omit nodes for which
\(Y_u=\varnothing\).

If \(x\in Y_u\) and \(i\in J_u\), then \(x\in L_i\), and therefore
\(F_i\cap Y_u=F'_i\cap Y_u\). Hence
\(\mathcal F\mathbin{|}(J_u,Y_u)
=\mathcal F'\mathbin{|}(J_u,Y_u)\). The right-hand side belongs to
\(\mathcal C\), and its equality with an induced restriction of the
final system shows that its \(t\)-fold intersections are bounded by
\(B\). Moreover, the blocks \(Y_u\times J_u\) assign each incidence of
\(\mathcal F\mathbin{|}(J,Y)\) exactly once. Indeed, if
\(x\in F_i\cap Y\), then \(i\in A_x\), and exactly one node in the
canonical decomposition of \(A_x\) contains \(i\). Conversely, the
pointwise equality above shows that every incidence counted in an
induced block is an incidence of the final system. Applying
\eqref{eq:base-estimate} to every block gives
\[
 w\bigl(\mathcal F\mathbin{|}(J,Y)\bigr)
 \leq\Gamma\sum_u\Delta_B(Y_u,J_u).
\]

For every \(x\), at most \(2qh\) nodes occur in its decomposition, and
at each tree depth the node intervals \(J_u\) are disjoint. Hence
\(\sum_u|Y_u|\leq2qh|Y|\) and
\(\sum_u|J_u|\leq h|J|\). Since
\(1+\log(1+|Y|+|J|)\leq\ell_B\), we have
\(q=O_k(\ell_B)\) and \(h=O(\ell_B)\). Using \(\ell_B\geq1\),
\[
 \sum_u\Delta_B(Y_u,J_u)
 \leq C_k\ell_B^2|Y|+C_k\ell_B(B+1)|J|
 \leq C_k\Delta_B(Y,J)\ell_B^2.
\]
This proves the restriction estimate, and \(J=I\), \(Y=X\) gives the
global estimate.
\end{proof}

Set \(\alpha_k=\min\{k-1,2\}\).

\begin{cor}\label{cor:direct-signed-fixed-k}
For every \(k\geq2\), \(\mathsf B_k(\ell_B^{\,\alpha_k})\) holds.
\end{cor}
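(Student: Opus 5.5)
The proof is a case split on \(k\), since \(\alpha_k=\min\{k-1,2\}\) takes only two regimes. We must show that for every \(J\subseteq I\), \(Y\subseteq X\), base system \(\mathcal F'\in\mathcal C\) on \((J,Y)\), and \(k\)-wise laminar or co-\(k\)-wise laminar family \(\mathcal L\) on \(Y\), the estimate \eqref{eq:Lk-property} holds with \(\Lambda=\ell_B^{\alpha_k}\), whenever \((F'_i\cap L_i)_{i\in J}\) has \(t\)-fold intersections bounded by \(B\).

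\emph{Case \(k=2\).} Here \(\alpha_2=1\). Proposition~\ref{prop:base-extension} already states the restricted estimate \(C_t\Gamma\Delta_B(Y,J)\ell_B\) for both laminar and co-laminar adjoined families, which is exactly \(\mathsf B_2(\ell_B)\). To apply it on a pair \((J,Y)\), we view \(\mathcal F'\) and \(\mathcal L\) as indexed by \(J\) on ground set \(Y\). The quantity \(\ell_B\) of the subsystem is at most the global \(\ell_B\), since \(\Delta_B(Y,J)\leq N_B\) and \(\ell_B\) is monotone.

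\emph{Case \(k=3\).} Here \(\alpha_3=2=k-1\). Corollary~\ref{cor:signed-fixed-k} gives \(\mathsf B_3(\ell_B^{2})\) directly. Theorem~\ref{thm:direct-signed-extension} gives it as well.

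\emph{Case \(k\geq3\) in general.} Here \(\alpha_k=2\), and we invoke Theorem~\ref{thm:direct-signed-extension}. A \(k\)-wise laminar adjoined family is its positive case with \(\mathcal D=\mathcal L\). A co-\(k\)-wise laminar family \(\mathcal L\) on \(Y\) is its negative case with \(D_i=Y\setminus L_i\), which is \(k\)-wise laminar by definition, since \(L_i=Y\setminus D_i\). The theorem's restricted estimate \(C_{k,t}\Gamma\Delta_B(Y,J)\ell_B^2\) then yields \eqref{eq:Lk-property} with \(A_{k,t}=C_{k,t}\) and \(\Lambda=\ell_B^2\). The theorem works with a ground set \(X\) and index set \(I\), so we apply it with \(X:=Y\) and \(I:=J\). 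Alternatively we apply it globally and pass to the induced restriction, both of which the theorem covers; one just checks that the hypotheses on \(\mathcal C\) and \eqref{eq:base-estimate} are inherited, as \(\mathcal C\) is closed under induced restrictions.

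There is no genuine obstacle; the only care point is bookkeeping. The constant must depend only on \(k,t\), which holds since each cited result gives \(C_{k,t}\) or \(C_t\). The \(\ell_B\) appearing for the restricted system must be dominated by the ambient \(\ell_B\) in the definition of \(\mathsf B_k\), which holds by the monotonicity noted above.
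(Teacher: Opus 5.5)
Your proposal is correct and matches the paper's proof: the case \(k=2\) is Proposition~\ref{prop:base-extension}, which covers laminar and co-laminar adjoined families with one factor \(\ell_B\), and the case \(k\geq3\) is Theorem~\ref{thm:direct-signed-extension} applied with the appropriate sign. Your additional bookkeeping is fine and only makes explicit what the paper leaves implicit, namely restricting to \((J,Y)\) and noting that the local \(\ell_B\) is dominated by the global one.
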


\begin{proof}
For \(k=2\), positive families are laminar and negative families are
co-laminar, so Proposition~\ref{prop:base-extension} applies with one
factor \(\ell_B\). For \(k\geq3\), apply
Theorem~\ref{thm:direct-signed-extension}.
\end{proof}

\begin{rem}\label{rem:recursive-comparison}
The recursive argument of Sections~\ref{sect:positive-induction}
and~\ref{sect:negative-induction} gives the structural implication
\(\mathsf B_k(\Lambda)\Rightarrow
\mathsf B_{k+1}(\Lambda\ell_B)\). Theorem~\ref{thm:direct-signed-extension}
uses the additional ordering input to give the sharper exponent
\(\alpha_k\) when \(k\geq4\). Thus the two arguments provide distinct
forms of the signed extension estimate.
\end{rem}

\section{Signed families and Boolean combinations}\label{sect:signed}

The next proposition gives a linear estimate when the base system is
constant and only one signed family is adjoined.

\begin{prop}\label{prop:signed-base}
Let \(\mathcal D=(D_i)_{i\in I}\) be \(k\)-wise laminar, and define
\(\mathcal H=(H_i)_{i\in I}\) by either \(H_i=D_i\) for every \(i\),
or \(H_i=X\setminus D_i\) for every \(i\). If \(J\subseteq I\) and
\(Y\subseteq X\) are such that the \(t\)-fold intersections of
\(\mathcal H\mathbin{|}(J,Y)\) are bounded by \(B\), then
\[
 w\bigl(\mathcal H\mathbin{|}(J,Y)\bigr)
 \leq
 \begin{cases}
 (k-1)(t-1)|Y|+B|J|,&H_i=D_i,\\[2mm]
 (t-1)(k+t-2)|Y|+
 \binom{k+t-1}{t}B|J|,&H_i=X\setminus D_i.
 \end{cases}
\]
In particular, both quantities are at most
\(C_{k,t}\Delta_B(Y,J)\).
\end{prop}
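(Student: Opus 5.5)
The positive case reduces to a per-point count using Dilworth's theorem. The negative case I can only plan partially: I reduce it to a covering count over a structured family of \(t\)-subsets, but I have not found the \(k\)-wise laminarity argument that makes that count work.

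\emph{Positive case.} Work throughout with the restricted sets \(D_i\cap Y\), \(i\in J\). I fix a linear preorder refinement of inclusion of the original fibers \(D_i\), breaking ties between equal fibers arbitrarily, so that comparable indices are totally ordered.

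Call \(i\in J\) \emph{small} if there are at least \(t-1\) other indices \(j\in J\) with \(D_j\supseteq D_i\) (ties counted via the fixed order). For a small \(i\), the set \(H_i\cap Y\) lies in the intersection of \(t\) distinct members of \(\mathcal H\mathbin{|}(J,Y)\). Hence \(|H_i\cap Y|\leq B\), and small indices contribute at most \(B|J|\) in total.

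For the remaining incidences, fix \(x\in Y\) and let \(S_x=\{i\in J:x\in D_i\}\). Any \(k\) members of \(S_x\) share the point \(x\), so \(k\)-wise laminarity forces two of them to be comparable. Thus the inclusion poset on \(S_x\) has no antichain of size \(k\), and by Dilworth's theorem \(S_x\) is covered by \(k-1\) chains.

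In each chain, only the top \(t-1\) members can fail to be small, since every lower member has at least \(t-1\) supersets above it in the chain. So each \(x\) carries at most \((k-1)(t-1)\) incidences with non-small indices. Summing over \(x\) gives \((k-1)(t-1)|Y|+B|J|\).

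\emph{Negative case.} Here \(H_i=Y\setminus D_i\), and for \(x\in Y\) put \(T_x=\{i:x\notin D_i\}\). The incidences I must bound are the pairs \((x,i)\) with \(i\in T_x\).

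The plan is to charge each such incidence either to the point \(x\), with at most \((t-1)(k+t-2)\) charges per point, or to a \(t\)-subset of indices whose complements all contain \(x\). Each such \(t\)-set contributes at most \(B\) points.

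To do this, I would attach to each \(i\) a bounded family of \(t\)-sets, at most \(\binom{k+t-1}{t}\) of them in an amortized sense. These would be formed from \(i\) together with indices among its \(k+t-2\) ``nearest'' incomparable or superset neighbours in a fixed ordering, such as the order produced by Proposition~\ref{prop:low-contiguity} or the inclusion preorder.

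The key step is this. If \(|T_x|\) exceeds \((t-1)(k+t-2)\), then among any \(k+t-1\) indices of \(T_x\), chosen suitably relative to the \(D\)-sets containing \(x\), \(k\)-wise laminarity applied to the \(D\)'s that do contain \(x\) forces a \(t\)-subset of \(T_x\) that belongs to the charged family. Then \(x\) lies in the common complement of that \(t\)-set. A pigeonhole or Ramsey count over \((k+t-1)\)-subsets then yields the factor \(\binom{k+t-1}{t}B|J|\).

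The main obstacle is making this charging injective enough. I would first try induction on \(|J|\): remove an index \(i\) whose \(D_i\cap Y\) is minimal, show its complement meets at most \(B\) points outside a set charged to \(k+t-2\) of its neighbours, and sum.

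Finally, the bound by \(C_{k,t}\Delta_B(Y,J)\) is immediate from both displayed estimates.
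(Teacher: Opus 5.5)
Your positive case is correct and is essentially the paper's argument. The paper sets aside indices with \(|H_i\cap Y|\leq B\) and bounds the height of each \(P_x\) by \(t-1\), whereas you set aside indices having \(t-1\) supersets. In both versions, Dilworth's theorem and double counting over \(x\in Y\) give \((k-1)(t-1)|Y|+B|J|\).

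The negative case, however, is not proved, and the local charging you sketch is not the mechanism that works. This covers the ``nearest neighbours'' charge, the Ramsey count, and the removal of a minimal \(D_i\). Your key step is asserted without an argument. The useful consequence of \(k\)-wise laminarity here is that few pairwise incomparable \(D_i\) can contain a given \(x\), so \(x\) lies in almost all of their complements. An ordering from Proposition~\ref{prop:low-contiguity} would bring in its logarithmic interval counts, which the log-free statement cannot absorb. Removing a minimal \(D_i\) removes the index with the largest complement, and you give no reason why that complement is controlled by \(k+t-2\) neighbours.

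The missing idea is that, after the same threshold split, the inclusion order becomes \emph{short} rather than narrow. Put \(J_{>B}=\{i\in J:|Y\setminus D_i|>B\}\); the remaining indices contribute at most \(B|J|\). Order \(J_{>B}\) by inclusion of \(D_i\cap Y\), breaking ties by a fixed order. A chain of \(t\) indices would make the sets \(Y\setminus D_i\) a reverse chain whose intersection is its smallest member, of size \(>B\), contradicting the \(t\)-fold bound. So the height is at most \(t-1\), and Mirsky's theorem splits \(J_{>B}\) into at most \(t-1\) antichains.

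On an antichain \(A\) with \(|A|=q\), the restricted \(D_i\) are pairwise incomparable. Hence \(k\)-wise laminarity gives \(|\{i\in A:x\in D_i\}|\leq k-1\), so every \(x\in Y\) lies in at least \(q-k+1\) of the complements. If \(q\leq k+t-2\), the antichain contributes at most \((k+t-2)|Y|\).

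If \(q\geq k+t-1\), double count pairs consisting of a point and a \(t\)-subset of \(A\). This gives \(|Y|\binom{q-k+1}{t}\leq B\binom qt\). The ratio \(\binom qt/\binom{q-k+1}{t}=\prod_{j=0}^{t-1}\frac{q-j}{q-k+1-j}\) is decreasing in \(q\), so it is at most its value \(\binom{k+t-1}{t}\) at \(q=k+t-1\). Therefore \(|Y|\leq\binom{k+t-1}{t}B\), and the antichain contributes at most \(\binom{k+t-1}{t}Bq\).

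This is a global count on each antichain, not an injective per-incidence charge: a large antichain forces the whole ground set \(Y\) to be small. Summing over the at most \(t-1\) antichains and adding the contribution of the indices with \(|Y\setminus D_i|\leq B\) gives the stated bound.
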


\begin{proof}
Fix \(J\) and \(Y\) as in the statement. The family
\((D_i\cap Y)_{i\in J}\) remains \(k\)-wise laminar.

First suppose that \(H_i=D_i\). Let
\(J_{\leq B}=\{i\in J:|H_i\cap Y|\leq B\}\) and
\(J_{>B}=J\setminus J_{\leq B}\). The first set of indices contributes
at most \(B|J|\). Fix a linear order on \(J\). For each \(x\in Y\),
partially order
\[
 P_x=\{i\in J_{>B}:x\in H_i\}
\]
by inclusion of \(H_i\cap Y\), using the fixed order to compare indices
whose restricted fibers are equal. Every member of \(P_x\) contains
\(x\). Consequently, its width is at most \(k-1\), since an antichain
of cardinality \(k\) would contradict \(k\)-wise laminarity. Its
height is at most \(t-1\): a chain of \(t\) distinct
indices would have intersection equal to its smallest restricted fiber,
which has cardinality greater than \(B\). By Dilworth's theorem,
\(|P_x|\leq(k-1)(t-1)\). Double counting the incidences indexed by
\(J_{>B}\) proves
\[
 \sum_{i\in J}|H_i\cap Y|
 \leq (k-1)(t-1)|Y|+B|J|.
\]

Now suppose that \(H_i=X\setminus D_i\), and retain the notation
\(J_{\leq B}\) and \(J_{>B}\), defined using \(|H_i\cap Y|\). Order
\(J_{>B}\) by inclusion of \(D_i\cap Y\), again using the fixed order
within each class of equal restricted fibers. This poset has height at
most \(t-1\). Indeed, along a chain of \(t\) indices the sets
\(H_i\cap Y=Y\setminus(D_i\cap Y)\) form a reverse chain, and their
intersection is its smallest member, which has cardinality greater than
\(B\). By Mirsky's theorem, \(J_{>B}\) is the union of at most \(t-1\)
antichains.

Fix one such antichain \(A\), and write \(q=|A|\). For \(x\in Y\), put
\[
 d_x=|\{i\in A:x\in D_i\}|,
 \qquad h_x=|\{i\in A:x\in H_i\}|=q-d_x.
\]
The antichain \(A\) need not have bounded cardinality. Since the
restricted \(D_i\) indexed by \(A\) are pairwise incomparable,
\(k\)-wise laminarity gives the pointwise bound \(d_x\leq k-1\). If
\(q\leq k+t-2\), then
\[
 \sum_{i\in A}|H_i\cap Y|\leq(k+t-2)|Y|.
\]
Suppose that \(q\geq k+t-1\). Then \(h_x\geq q-k+1\geq t\) for every
\(x\in Y\), and double counting pairs consisting of a point and a
\(t\)-element subset of \(A\) gives
\[
 |Y|\binom{q-k+1}{t}
 \leq\sum_{x\in Y}\binom{h_x}{t}
 =\sum_{\substack{S\subseteq A\\|S|=t}}
 \left|\bigcap_{i\in S}(H_i\cap Y)\right|
 \leq B\binom qt.
\]
For \(q\geq k+t-1\), each factor in the following product decreases as
\(q\) increases, and hence
\[
 \frac{\binom qt}{\binom{q-k+1}{t}}
 =\prod_{j=0}^{t-1}\frac{q-j}{q-k+1-j}
 \leq\binom{k+t-1}{t}.
\]
Consequently, \(|Y|\leq\binom{k+t-1}{t}B\), and
\[
 \sum_{i\in A}|H_i\cap Y|
 \leq \binom{k+t-1}{t}Bq.
\]
There are at most \(t-1\) antichains. Summing the preceding estimates
and adding the contribution of \(J_{\leq B}\) gives
\[
 \sum_{i\in J}|H_i\cap Y|
 \leq (t-1)(k+t-2)|Y|
 +\binom{k+t-1}{t}B|J|.
\]
The proof includes empty and repeated fibers and remains valid when
\(B=0\).
\end{proof}

\begin{cor}\label{cor:one-signed-graph}
Let \(\phi(x;y)\) be a \((k,1)\)-semi-equation in \(M\), and let
\(V\subseteq M^{|x|}\) and \(W\subseteq M^{|y|}\) be finite. Define
\(E^+=\{(a,b)\in V\times W:M\models\phi(a;b)\}\) and
\(E^-=\{(a,b)\in V\times W:M\models\neg\phi(a;b)\}\). If the indicated
relation is \(K_{t,t}\)-free, then
\[
 \begin{aligned}
 |E^+|&\leq(k-1)(t-1)|V|+(t-1)|W|,\\
 |E^-|&\leq(t-1)(k+t-2)|V|
 +\binom{k+t-1}{t}(t-1)|W|,
 \end{aligned}
\]
respectively.
\end{cor}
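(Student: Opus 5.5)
This corollary should follow from Proposition~\ref{prop:signed-base} once the graph is translated into an indexed set system. Take the ground set \(X=V\) and the index set \(I=W\). For \(b\in W\) put \(D_b=\{a\in V:M\models\phi(a;b)\}\). Because \(\phi\) is a \((k,1)\)-semi-equation, the full fibers \(\phi(M^{|x|};b)\), for \(b\in M^{|y|}\), form a \(k\)-wise laminar family. Take \(k\) distinct indices \(b_1,\ldots,b_k\in W\) whose traces \(D_{b_j}\) share a point of \(V\). Then the full fibers share that point, so some full fiber is contained in another, and intersecting with \(V\) preserves this inclusion. Hence \((D_b)_{b\in W}\) is \(k\)-wise laminar on \(V\). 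Indices \(b\ne b'\) with equal traces are allowed, which matches the indexed setting of Proposition~\ref{prop:signed-base}.

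Next I would set \(H_b=D_b\) in the positive case and \(H_b=V\setminus D_b\) in the negative case. Then \(|E^{\pm}|=w(\mathcal H)=\sum_{b\in W}|H_b|\), where \(\mathcal H=(H_b)_{b\in W}\). The key step is to check that \(\mathcal H\) has \(t\)-fold intersections bounded by \(B=t-1\). Suppose \(b_1,\ldots,b_t\in W\) are distinct. If \(|H_{b_1}\cap\cdots\cap H_{b_t}|\ge t\), pick \(t\) points of this intersection as \(A_1\subseteq V\), and let \(A_2=\{b_1,\ldots,b_t\}\). Then \(A_1\times A_2\) lies in the relation, contradicting \(K_{t,t}\)-freeness. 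So the intersection has at most \(t-1\) points.

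Finally, apply Proposition~\ref{prop:signed-base} with \(J=I=W\), \(Y=X=V\), and \(B=t-1\). This gives \(|E^+|\le(k-1)(t-1)|V|+(t-1)|W|\) and \(|E^-|\le(t-1)(k+t-2)|V|+\binom{k+t-1}{t}(t-1)|W|\), which are exactly the stated bounds. There is no real obstacle here. The only care needed is in orienting the bipartite graph, so that points are \(V\) and indices are \(W\) and the coefficients land on the correct sides. One should also note that repeated and empty fibers are permitted, as the proposition explicitly allows.
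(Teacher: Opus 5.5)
Your proposal is correct and follows exactly the paper's route: the paper's proof is the single instruction to apply Proposition~\ref{prop:signed-base} with \(X=V\), \(I=W\), and \(B=t-1\). You have also correctly supplied the two verifications the paper leaves implicit, namely that the traces \((D_b)_{b\in W}\) on \(V\) remain \(k\)-wise laminar and that \(K_{t,t}\)-freeness bounds the \(t\)-fold intersections by \(t-1\).
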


\begin{proof}
Apply Proposition~\ref{prop:signed-base} with \(X=V\), \(I=W\), and
\(B=t-1\).
\end{proof}

\begin{lem}\label{lem:signed-conjunction}
Let \(m\geq0\), and let \(\mathcal C_0\) satisfy~\eqref{eq:base-estimate}. For \(1\leq q\leq m\), let
\[
 \mathcal D^{(q)}=(D_i^{(q)})_{i\in I}
\]
be \(k\)-wise laminar, and choose a sign \(\varepsilon_q\in\{+,-\}\) independent of \(i\). Set
\[
 L_i^{(q)}=
 \begin{cases}
 D_i^{(q)},&\varepsilon_q=+,\\
 X\setminus D_i^{(q)},&\varepsilon_q=-,
 \end{cases}
\qquad
 H_i=F'_i\cap\bigcap_{q=1}^mL_i^{(q)},
\]
where \(\mathcal F'\in\mathcal C_0\); when \(m=0\), the intersection of literals is understood to be \(X\). If the \(t\)-fold intersections of
\(\mathcal H=(H_i)_{i\in I}\) are bounded by \(B\), then
\[
 w(\mathcal H)
 \leq C_{k,t,m}\Gamma N_B\ell_B^{m\alpha_k}.
\]
For every \(J\subseteq I\) and \(Y\subseteq X\), one also has
\[
 w\bigl(\mathcal H\mathbin{|}(J,Y)\bigr)
 \leq C_{k,t,m}\Gamma\Delta_B(Y,J)\ell_B^{m\alpha_k}.
\]
\end{lem}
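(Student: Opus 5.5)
We argue by induction on \(m\), peeling off one signed literal at a time and absorbing the remaining conjunction into a new base class. For \(m=0\) we have \(H_i=F'_i\) with \(\mathcal F'\in\mathcal C_0\). The bounded \(t\)-fold intersection hypothesis on \(\mathcal H\mathbin{|}(J,Y)\) together with~\eqref{eq:base-estimate} then gives \(w(\mathcal H\mathbin{|}(J,Y))\leq\Gamma\Delta_B(Y,J)\). This is the claim with \(\ell_B^0\).

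For the inductive step, fix \(m\geq1\). Let \(\mathcal C_{m-1}\) be the class of all induced restrictions of systems of the form
\[
 \Bigl(F'_i\cap\bigcap_{q=1}^{m-1}L_i^{(q)}\Bigr)_{i\in I'},
\]
where \(\mathcal F'\in\mathcal C_0\), each \(\mathcal D^{(q)}\) is \(k\)-wise laminar, and the signs are fixed. This class is closed under induced restrictions, because \(k\)-wise laminarity passes to restrictions and \(\mathcal C_0\) is closed. The induction hypothesis says exactly that \(\mathcal C_{m-1}\) satisfies~\eqref{eq:base-estimate}, except that the constant is \(\Gamma_{m-1}=C_{k,t,m-1}\Gamma\ell_B^{(m-1)\alpha_k}\) rather than a pure number. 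We then write \(H_i=G_i\cap L_i^{(m)}\), where \(\mathcal G\in\mathcal C_{m-1}\) is the conjunction of the first \(m-1\) literals with \(F'\). Applying Corollary~\ref{cor:direct-signed-fixed-k}, that is \(\mathsf B_k(\ell_B^{\alpha_k})\), with base class \(\mathcal C_{m-1}\) and base constant \(\Gamma_{m-1}\) yields
\[
 w(\mathcal H\mathbin{|}(J,Y))\leq A_{k,t}\Gamma_{m-1}\Delta_B(Y,J)\ell_B^{\alpha_k}.
\]
This is the desired bound with \(C_{k,t,m}=A_{k,t}C_{k,t,m-1}\).

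The main obstacle is that \(\Gamma_{m-1}\) depends on \(\ell_B\). The results of Sections~\ref{sect:setup}--\ref{sect:sharpening} were stated for a fixed number \(\Gamma\geq1\) that must work uniformly on every \((J,Y)\). We therefore have to check two things. First, the \(\ell_B\) appearing in the inductive hypothesis is computed from the ambient \(N_B=\Delta_B(X,I)\) and not from the restricted pair, so it is a single fixed number throughout the argument. The restricted estimates in Lemma~\ref{lem:signed-conjunction}, Proposition~\ref{prop:base-extension} and Theorem~\ref{thm:direct-signed-extension} are indeed phrased with the global \(\ell_B\) and \(\Delta_B(Y,J)\), so this holds. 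Second, the proofs of Theorem~\ref{thm:direct-signed-extension} and Proposition~\ref{prop:base-extension} use~\eqref{eq:base-estimate} only as a black box applied to blocks \(\mathcal G\mathbin{|}(J_u,Y_u)\). Each such block is an induced restriction of a member of \(\mathcal C_{m-1}\) whose \(t\)-fold intersections are bounded by \(B\), since it coincides with a restriction of \(\mathcal H\). So the constant \(\Gamma\) may be any fixed number, including one involving \(\ell_B\), and the base estimate is invoked exactly in the form the induction hypothesis provides.

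The \(t\)-fold intersection hypothesis is needed only on the blocks where \(L^{(m)}\) is automatically satisfied. There \(\mathcal G\) and \(\mathcal H\) agree, so no hypothesis on \(\mathcal G\) itself is required. Taking \(J=I\) and \(Y=X\) gives the global bound.
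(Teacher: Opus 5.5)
Your proposal is correct and takes the same route as the paper: induction on \(m\), with the first \(m-1\) literals absorbed into a new base class that satisfies~\eqref{eq:base-estimate} with \(\Gamma\) replaced by \(C_{k,t,m-1}\Gamma\ell_B^{(m-1)\alpha_k}\), then Corollary~\ref{cor:direct-signed-fixed-k} applied to the last literal, and the intersection hypothesis needed only on blocks that are subsystems of the final system. You also state explicitly that the new constant is a single fixed number because \(\ell_B\) is the ambient quantity, which the paper leaves implicit.
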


\begin{proof}
Proceed by induction on \(m\). For \(m=0\), the assertion
is~\eqref{eq:base-estimate}. After adjoining the first \(m-1\)
literals, the resulting class, including all its induced restrictions,
satisfies the base estimate with \(\Gamma\) replaced by
\[
 C_{k,t,m-1}\Gamma\ell_B^{(m-1)\alpha_k}.
\]
More precisely, take this class to consist of all systems obtained
from a member of \(\mathcal C_0\) by adjoining the first \(m-1\)
fixed signed families, together with all their induced restrictions.
It is closed under induced restrictions, and the induction hypothesis
bounds each member whose \(t\)-fold intersections are bounded by \(B\).
The intermediate system obtained from the first \(m-1\) literals need not
itself satisfy this intersection condition. This causes no additional
hypothesis: the base estimate for the new class is conditional on the
intersection condition, and every system to which that estimate is applied
in the signed extension is a subsystem of the final system. On a
restriction \((J,Y)\), a positive literal gives the \(k\)-wise laminar
family \((D_i^{(m)}\cap Y)_{i\in J}\), whereas a negative literal gives
the co-\(k\)-wise laminar family
\((Y\setminus(D_i^{(m)}\cap Y))_{i\in J}\). Therefore
Corollary~\ref{cor:direct-signed-fixed-k} applies on the ground set \(Y\) and
gives both the induction step and the stated restriction estimate.
\end{proof}

\begin{cor}\label{cor:constant-base-conjunction}
In the notation of Lemma~\ref{lem:signed-conjunction}, suppose that
\(m\geq1\) and
\(F'_i=X\) for every \(i\in I\). Then
\[
 w(\mathcal H)
 \leq C_{k,t,m}N_B\ell_B^{(m-1)\alpha_k}.
\]
For every \(J\subseteq I\) and \(Y\subseteq X\), one also has
\[
 w\bigl(\mathcal H\mathbin{|}(J,Y)\bigr)
 \leq C_{k,t,m}\Delta_B(Y,J)\ell_B^{(m-1)\alpha_k}.
\]
\end{cor}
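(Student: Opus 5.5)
The first literal is absorbed into the base class, and Lemma~\ref{lem:signed-conjunction} is then applied to the remaining \(m-1\) literals. The constant base \(F'_i=X\) carries no information, so \(H_i=\bigcap_{q=1}^mL_i^{(q)}\). Proposition~\ref{prop:signed-base} shows that a single signed \(k\)-wise laminar family already satisfies a \emph{linear} estimate. This is what removes one factor \(\ell_B^{\alpha_k}\) compared with applying Lemma~\ref{lem:signed-conjunction} directly with \(\mathcal C_0\) the constant class.

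\textbf{Base class.} Let \(\mathcal C_1\) be the class of all induced restrictions of the single signed system \((L_i^{(1)})_{i\in I}\). It is closed under induced restrictions by construction. Take any \(Y\subseteq X\), any \(J\subseteq I\), and a member \(\mathcal G=(L_i^{(1)}\cap Y)_{i\in J}\) whose \(t\)-fold intersections are bounded by \(B\). Proposition~\ref{prop:signed-base}, applied with \(H_i=D_i^{(1)}\) or \(H_i=X\setminus D_i^{(1)}\) according to \(\varepsilon_1\), gives \(w(\mathcal G)\leq C_{k,t}\Delta_B(Y,J)\). Hence \(\mathcal C_1\) satisfies~\eqref{eq:base-estimate} with \(\Gamma=C_{k,t}\), and this constant is at least \(1\) after enlarging it if necessary.

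\textbf{Remaining literals.} Apply Lemma~\ref{lem:signed-conjunction} with \(\mathcal C_0=\mathcal C_1\), base system \(F'_i=L_i^{(1)}\), and the \(m-1\) signed families \(\mathcal D^{(2)},\ldots,\mathcal D^{(m)}\). The resulting system is exactly \(\mathcal H\), so the lemma yields
\[
 w\bigl(\mathcal H\mathbin{|}(J,Y)\bigr)\leq C_{k,t,m-1}C_{k,t}\,\Delta_B(Y,J)\,\ell_B^{(m-1)\alpha_k}
\]
for every \(J\subseteq I\) and \(Y\subseteq X\). Taking \(J=I\) and \(Y=X\) gives the global bound. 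When \(m=1\), the lemma's case \(m=0\) is just the base estimate, so the bound is linear.

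\textbf{Main obstacle.} The only delicate point is bookkeeping. The base estimate for \(\mathcal C_1\) must be applied only to subsystems that inherit the \(t\)-fold intersection bound from \(\mathcal H\). This matches the conditional formulation already used in the proof of Lemma~\ref{lem:signed-conjunction}, where every system fed to~\eqref{eq:base-estimate} is an induced restriction of the final system. Since Proposition~\ref{prop:signed-base} is stated for every restriction \((J,Y)\), no further hypothesis is needed, and the argument goes through verbatim.
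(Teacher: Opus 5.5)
Your proposal is correct and follows the paper's proof: you absorb the first signed literal into a base class whose linear estimate comes from Proposition~\ref{prop:signed-base}, and you apply Lemma~\ref{lem:signed-conjunction} to the remaining \(m-1\) families, with the case \(m=1\) reducing to the base estimate. The only difference is cosmetic. You take the base class to be the induced restrictions of the single system \((L_i^{(1)})_{i\in I}\), whereas the paper uses all such signed systems, and your choice works equally well.
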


\begin{proof}
Use the first signed literal as the base system.
Proposition~\ref{prop:signed-base} shows that the class consisting of all
such systems and their induced restrictions satisfies
\eqref{eq:base-estimate} with a constant depending only on \(k\) and
\(t\). Apply Lemma~\ref{lem:signed-conjunction} to the remaining
\(m-1\) signed families. This also covers \(m=1\), when no further
family is adjoined.
\end{proof}

\begin{thm}\label{thm:boolean-combinations}
Let \(m\geq1\) and \(\Psi\colon\{0,1\}^m\to\{0,1\}\). Let
\(\mathcal F'\in\mathcal C\), and for \(1\leq q\leq m\) let
\(\mathcal D^{(q)}=(D_i^{(q)})_{i\in I}\) be \(k\)-wise laminar. Define
\[
 H_i=F'_i\cap
 \left\{x\in X:
 \Psi\bigl(\ind_{x\in D_i^{(1)}},\ldots,\ind_{x\in D_i^{(m)}}\bigr)=1
 \right\}.
\]
If the \(t\)-fold intersections of \(\mathcal H=(H_i)_{i\in I}\) are
bounded by \(B\), then
\[
 w(\mathcal H)
 \leq C_{k,t,m}\Gamma N_B\ell_B^{m\alpha_k}.
\]
For every \(J\subseteq I\) and \(Y\subseteq X\), one also has
\[
 w\bigl(\mathcal H\mathbin{|}(J,Y)\bigr)
 \leq C_{k,t,m}\Gamma\Delta_B(Y,J)\ell_B^{m\alpha_k}.
\]
If \(F'_i=X\) for every \(i\in I\), then the sharper estimates
\[
 w(\mathcal H)
 \leq C_{k,t,m}N_B\ell_B^{(m-1)\alpha_k}
\]
and
\[
 w\bigl(\mathcal H\mathbin{|}(J,Y)\bigr)
 \leq C_{k,t,m}\Delta_B(Y,J)\ell_B^{(m-1)\alpha_k}
\]
hold.
\end{thm}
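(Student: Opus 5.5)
The plan is to reduce the Boolean combination to a disjoint union of signed conjunctions and then apply Lemma~\ref{lem:signed-conjunction} and Corollary~\ref{cor:constant-base-conjunction} to each piece. Write \(\Psi^{-1}(1)=\{\sigma^{(1)},\ldots,\sigma^{(M)}\}\subseteq\{0,1\}^m\), where \(M\leq2^m\). For each \(\sigma\in\Psi^{-1}(1)\), put
\[
 H_i^{\sigma}=F'_i\cap\bigcap_{q=1}^m L_i^{(q),\sigma},
\]
where \(L_i^{(q),\sigma}=D_i^{(q)}\) if \(\sigma_q=1\) and \(L_i^{(q),\sigma}=X\setminus D_i^{(q)}\) if \(\sigma_q=0\). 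The sign pattern is fixed by \(\sigma\) and is independent of \(i\). Then \(H_i=\bigsqcup_{\sigma}H_i^{\sigma}\) is a disjoint union, since distinct \(\sigma\) prescribe disjoint membership patterns. Consequently \(w(\mathcal H\mathbin{|}(J,Y))=\sum_\sigma w(\mathcal H^\sigma\mathbin{|}(J,Y))\). If \(\Psi\equiv0\), all weights vanish and there is nothing to prove.

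The key observation is that each \(\mathcal H^\sigma=(H_i^\sigma)_{i\in I}\) is a subsystem of \(\mathcal H\). Hence its \(t\)-fold intersections, and those of all its induced restrictions, are bounded by \(B\), because \(H^\sigma_{i_1}\cap\cdots\cap H^\sigma_{i_t}\subseteq H_{i_1}\cap\cdots\cap H_{i_t}\). Lemma~\ref{lem:signed-conjunction} therefore applies to each \(\sigma\) with \(\mathcal C_0=\mathcal C\). This gives \(w(\mathcal H^\sigma\mathbin{|}(J,Y))\leq C_{k,t,m}\Gamma\Delta_B(Y,J)\ell_B^{m\alpha_k}\). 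Summing over at most \(2^m\) patterns absorbs the factor \(2^m\) into \(C_{k,t,m}\), and taking \(J=I\), \(Y=X\) gives the global estimate.

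For the constant base \(F'_i=X\), each \(\mathcal H^\sigma\) is exactly the system in Corollary~\ref{cor:constant-base-conjunction}, with \(m\geq1\) literals of fixed signs. That corollary gives the bound \(C_{k,t,m}\Delta_B(Y,J)\ell_B^{(m-1)\alpha_k}\) for each piece, again using only that its \(t\)-fold intersections are bounded by \(B\), which holds by the subsystem observation. Summing over \(\sigma\) yields the sharper estimates.

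The only point that needs care, and the one I expect to be the main obstacle, is that the intersection hypothesis is needed for the pieces rather than only for \(\mathcal H\). Lemma~\ref{lem:signed-conjunction} and Corollary~\ref{cor:constant-base-conjunction} are conditional on the \(t\)-fold intersection bound of the final conjunction system, and this is exactly what the subsystem inclusion \(H_i^\sigma\subseteq H_i\) supplies. The disjointness of the decomposition is also needed, so that weights add rather than merely being bounded by a union estimate.
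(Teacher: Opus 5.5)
Your proposal is correct and follows the paper's proof essentially verbatim. You split each fiber into the disjoint membership-pattern cells indexed by \(\Psi^{-1}(1)\), observe that each cell system is a subsystem of \(\mathcal H\) and so inherits the \(t\)-fold intersection bound, then apply Lemma~\ref{lem:signed-conjunction} (or Corollary~\ref{cor:constant-base-conjunction} when \(F'_i=X\)) to each of the at most \(2^m\) cells and sum, with the restriction estimates handled the same way.
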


\begin{proof}
Let
\[
 \mathcal A=\{a\in\{0,1\}^m:\Psi(a)=1\}.
\]
For \(a=(a_1,\ldots,a_m)\in\mathcal A\), set
\[
 L_{i,a}^{(q)}=
 \begin{cases}
 D_i^{(q)},&a_q=1,\\
 X\setminus D_i^{(q)},&a_q=0,
 \end{cases}
\qquad
 H_{i,a}=F'_i\cap\bigcap_{q=1}^mL_{i,a}^{(q)}.
\]
For each \(i\), the membership-pattern cells partition \(X\), so
\[
 H_i=\bigsqcup_{a\in\mathcal A}H_{i,a},
 \qquad
 w(\mathcal H)=\sum_{a\in\mathcal A}w(\mathcal H_a).
\]
Every \(\mathcal H_a\) is a subsystem of \(\mathcal H\), so its
\(t\)-fold intersections are bounded by \(B\).
Lemma~\ref{lem:signed-conjunction} applies to each \(a\), and
\(|\mathcal A|\leq2^m\). Summing gives the general estimates. If the
base system is constant, apply
Corollary~\ref{cor:constant-base-conjunction} to each cell instead.
For a restriction \((J,Y)\), the corresponding cells again partition
each fiber, and every accepted cell is a subsystem of
\(\mathcal H\mathbin{|}(J,Y)\). Applying the same argument on the ground
set \(Y\) gives the restriction estimates.
\end{proof}

\section{Proof of Theorem \ref{thm:absolute-main}}\label{sect:higher-arity}

\begin{proof}[Proof of Theorem~\ref{thm:absolute-main}]
We argue by induction on \(r\), proving the assertion simultaneously
for every fixed formula of arity \(r\). Suppose first that \(r=2\).
By the definition of a \(1\)-semi-equational theory, with respect to the
partition \(x_1;(x_2,z)\), the formula \(\varphi(x_1,x_2;z)\) is
equivalent modulo \(T\) to a Boolean combination of fixed formulas
\(\psi_1,\ldots,\psi_m\), given by a fixed Boolean function \(\Psi\),
where \(\psi_q\) is a \((k_q,1)\)-semi-equation. If the Boolean
combination has no literals, adjoin the tautological
\((2,1)\)-semi-equation and let \(\Psi\) ignore it. Thus we may assume
\(m\geq1\). Put \(k=\max_qk_q\). Each \((k_q,1)\)-semi-equation is also a
\((k,1)\)-semi-equation: from \(k\) fibers with a common point, apply its
defining condition to any \(k_q\) of them. After fixing \(d\), set
\(X=V_1\) and \(I=V_2\). For \(b\in I\), define
\(D_b^{(q)}=\{a\in X:M\models\psi_q(a;b,d)\}\) and
\(H_b=\{a\in X:M\models\varphi(a,b;d)\}\).
Each \((D_b^{(q)})_{b\in I}\) is \(k\)-wise laminar. Since \(E\) is
\(K_{t,t}\)-free, the intersection of any \(t\) fibers \(H_b\) with
distinct indices has cardinality at most \(t-1\). Indeed, \(t\) points
in such an intersection together with the \(t\) indices would form a
copy of \(K_{t,t}\).

Apply the constant-base conclusion of
Theorem~\ref{thm:boolean-combinations} with \(B=t-1\). Here
\(N_B=|V_1|+t|V_2|\leq tn\), and
\(\ell_B=O_t(1+\log(1+n))\). Hence
\[
 |E|=O_{T,\varphi,t,2}\!\left(
 n(1+\log(1+n))^{(m-1)\alpha_k}\right),
\]
where \(m\) and \(k\) depend only on \(T\) and \(\varphi\). This proves
the assertion for \(r=2\), uniformly in \(d\).

Assume \(r\geq3\) and that the result holds in arity \(r-1\). Put
\(X=V_1\times\cdots\times V_{r-1}\) and \(I=V_r\), and, for \(b\in I\),
let \(H_b=\{a\in X:M\models\varphi(a,b;d)\}\). With respect to the
partition \((x_1,\ldots,x_{r-1});(x_r,z)\), the formula \(\varphi\) is
equivalent modulo \(T\) to a Boolean combination of fixed formulas
\(\psi_1,\ldots,\psi_m\), given by a fixed Boolean function, where
\(\psi_q\) is a \((k_q,1)\)-semi-equation. As above, a tautological
literal may be adjoined, so assume \(m\geq1\). Put \(k=\max_q k_q\).
After fixing \(d\), the
fibers of every \(\psi_q\) over \(b\in I\) form a \(k\)-wise laminar
family on \(X\).

For distinct \(b_1,\ldots,b_t\in I\), set
\(Z_{b_1,\ldots,b_t}=H_{b_1}\cap\cdots\cap H_{b_t}\). This relation is
\(K_{t,\ldots,t}\)-free on the first \(r-1\) parts: otherwise a forbidden
product in \(Z_{b_1,\ldots,b_t}\), together with
\(\{b_1,\ldots,b_t\}\), would give a \(K_{t,\ldots,t}\) in \(E\).
Moreover, these intersections are uniformly defined by the fixed formula
\[
 \theta(x_1,\ldots,x_{r-1};u_1,\ldots,u_t,z)
 =\bigwedge_{j=1}^t
 \varphi(x_1,\ldots,x_{r-1},u_j;z).
\]
The total size of the first \(r-1\) vertex parts is at most \(n\).
The induction hypothesis applied to \(\theta\) gives constants
\(C_0,a\geq0\), independent of \(b_1,\ldots,b_t,d\), such that
\[
 |Z_{b_1,\ldots,b_t}|
 \leq C_0n^{r-2}\bigl(1+\log(1+n)\bigr)^a.
\]
Let \(B\) be the ceiling of the right-hand side. Thus
\((H_b)_{b\in I}\) has \(t\)-fold intersections bounded by the same \(B\)
for every choice of distinct indices. This is the only use of definability
of the fiber intersections; the combinatorial estimates of
Sections~\ref{sect:setup}--\ref{sect:signed} require no
definable order on \(X\) or \(I\).

Applying the constant-base conclusion of
Theorem~\ref{thm:boolean-combinations} to the families defined by the
\(\psi_q\) gives
\[
 |E|\leq C_{k,t,m}\bigl(|X|+(B+1)|I|\bigr)
 \left(1+\log\bigl(1+|X|+(B+1)|I|\bigr)\right)^{(m-1)\alpha_k}.
\]
Since \(|X|=\prod_{j=1}^{r-1}|V_j|\leq n^{r-1}\),
\(|I|=|V_r|\leq n\), and
\(B=O_{T,\varphi,t,r}\bigl(n^{r-2}(1+\log(1+n))^a\bigr)\), for
\(n\geq2\) one has
\[
 |X|+(B+1)|I|
 =O_{T,\varphi,t,r}\!\left(
 n^{r-1}(1+\log(1+n))^a\right),
\]
and the remaining logarithmic factor is
\(O_{T,\varphi,t,r}(1+\log(1+n))\). The asserted estimate follows, for
example with \(c=a+(m-1)\alpha_k\). If \(n\leq1\), at least one vertex
part is empty, so \(E=\varnothing\).
\end{proof}

\begingroup
\sloppy
\printbibliography
\endgroup

\end{document}